\newtheorem{theorem}{Theorem}[section]
\newtheorem{lem}[theorem]{Lemma}
\newtheorem{defn}[theorem]{Definition}
\newtheorem{prop}[theorem]{Proposition}
\newtheorem{coro}[theorem]{Corollary}
\newcommand{\cp}{\mathbb{C}\mathrm{P}^1} 
\newcommand{\pslr}{\mathrm{PSL}_2(\mathbb{R})}
\newcommand{\pslc}{\mathrm{PSL}_2(\mathbb{C})}
\newcommand{\pslnc}{\mathrm{PSL}_n(\mathbb{C})}
\newcommand{\slnc}{\mathrm{SL}_n(\mathbb{C})}
\newcommand{\slc}{\mathrm{SL}_2(\mathbb{C})}
\newcommand{\sm}{\mathbb{S}, \mathbb{M}}
\newcommand*{\bigchi}{\mbox{\large$\chi$}}
\title{Meromorphic Projective Structures:\\ Signed Spaces, Grafting and Monodromy}
\date{}
\begin{document}

\author{Spandan Ghosh} 
\author{Subhojoy Gupta}

\address{The Mathematical Institute,
Radcliffe Observatory Quarter,
Oxford
OX2 6GG}
\email{spandan.ghosh@maths.ox.ac.uk}
\address{Department of Mathematics, Indian Institute of Science, Bangalore 560012, India}
\email{subhojoy@iisc.ac.in}

\begin{abstract}
    A meromorphic quadratic differential on a compact Riemann surface defines a complex projective structure away from the poles via the Schwarzian equation.  In this article we first prove the analogue of Thurston's Grafting Theorem for the space of such structures with signings at regular singularities. This extends previous work of Gupta-Mj which only considered irregular singularities. We also define a framed monodromy map from the signed space extending work of Allegretti-Bridgeland, and we characterize the $\pslc$-representations that arise as holonomy, generalizing results of Gupta-Mj and Faraco-Gupta. As an application of our Grafting Theorem, we also show that the monodromy map to the moduli space of framed representations (as introduced by Fock-Goncharov) is a local biholomorphism, proving a conjectured analogue of a result of Hejhal. 
\end{abstract}

\maketitle

\section{Introduction}

A marked and bordered surface is a pair $(\mathbb{S}, \mathbb{M})$ where $\mathbb{S}$ is a compact oriented surface of genus $g$ and $k$ boundary components, together with a non-empty set $\mathbb{M}$ of finitely many marked points, where each boundary component has at least one marked point. We shall assume that if $g=0$ then $\lvert \mathbb{M} \rvert \geq 3$.

Let  $\mathfrak{n} = (n_1,n_2,\ldots, n_k)$  be a tuple of positive integers, each $n_i\geq 3$, such that $(n_i-2)$ is the number of marked points on the $i$-th boundary. The set of marked points in the interior of the surface, which can be considered as punctures, is denoted by $\mathbb{P}$. This paper concerns the space of (signed) meromorphic projective structures $\mathcal{P}^\pm(\mathbb{S}, \mathbb{M})$ which is a $2^{\lvert \mathbb{P}\rvert}$-fold branched cover over the space  $\mathcal{P}(\mathbb{S}, \mathbb{M})$ (see section 2.3 for details). 

Recall that such a meromorphic projective structure is determined by a meromorphic quadratic differential $q$ on a compact Riemann surface  $X$ of genus $g$, with $k$ poles having orders $(n_1,n_2, \ldots, n_k)$ and $\lvert \mathbb{P} \rvert$ poles having order at most $2$, via the Schwarzian equation:
\begin{equation}\label{schw}
    u^{\prime\prime} + \frac{1}{2}q u = 0.
\end{equation}
Namely, the ratio of solutions of \eqref{schw} determines a \textit{developing map}  $f:\widetilde{X} \to \cp$ that is equivariant with respect to a \textit{holonomy/monodromy representation}  $\rho:\pi_1(X) \to \pslc$. Historically, these arose in the context of uniformizing a punctured sphere (see for example Chapters VIII and IX of \cite{dSG}). 

Note that we recover $(\sm)$ as the underlying topological surface via a real blow-up of the $k$ poles of orders at least $3$; the horizontal directions of $q$ at such a pole determine the marked points on the corresponding boundary circle. (Recall here that a pole of order $n$ has $n-2$ horizontal directions, i.e. tangent directions $\pm v$ where $q(v) \in \mathbb{R}^+$.)

\vspace{.05in}

Our first result in this paper is a more geometric parametrization of  $\mathcal{P}^\pm(\mathbb{S}, \mathbb{M})$ which is an analogue of Thurston's Grafting Theorem, which we now briefly recall.  For  a \textit{closed} oriented surface $S$ of genus $g\geq 2$, Thurston had introduced a ``grafting" deformation of any hyperbolic structure on $S$ that results in a new complex projective structure. This construction starts with a hyperbolic surface $X$ with  developing map $f:\widetilde{X} \to \mathbb{H} \subset \cp$, and a measured geodesic lamination $\lambda$  on $X$. The new complex projective structure is obtained by equivariantly inserting ``lunes" along the images of the leaves of the lift of $\lambda$ to the universal cover. Here a lune is a region of $\cp$ bounded by two circular arcs, and  its ``angle"  is determined by the transverse measure on $\lambda$; for details of this operation see \cite{Dum, Tani}. Thurston's Grafting Theorem (see \cite{KT92, BabExp}) asserts that the space $\mathcal{P}_g$ of complex projective structures on $S$ is \textit{parametrized} by such deformations, i.e.\ the \textit{grafting map} 
\begin{equation}\label{grmap}
    \text{Gr}: \mathcal{T}_g \times \mathcal{ML} \to \mathcal{P}_g
\end{equation}
is a homeomorphism. (Here $\mathcal{T}_g$ is the \textit{Teichm\"{u}ller space} of marked hyperbolic structures and $\mathcal{ML}$ is the space of measured laminations on $S$.) 
 
In our context, the analogues of the spaces in the left hand side of \eqref{grmap}, are  enhanced Teichm\"{u}ller  space $\mathcal{T}^\pm (\sm)$ and the space of signed measured laminations $\mathcal{ML}^\pm(\sm)$, where in both cases the signing is associated with the punctures in $\mathbb{P}$ (see \S2 for details). The former space already appears in previous literature in the context of marked and bordered surfaces (see, for example, \cite{All}). The latter is a space that we introduce, and should be related to the spaces of real $\mathcal{A}$- and $\mathcal{X}$-laminations that Fock-Goncharov introduced in \cite[\S12]{FG} (see also \cite{FG2}).

We shall prove: 
\begin{theorem}\label{thm1} 
    There is a grafting map 
    \begin{equation*}   \widehat{\text{Gr}}:\mathcal{T}^\pm(\sm) \times \mathcal{ML}^\pm(\sm)  \to \mathcal{P}^\pm(\sm)
    \end{equation*}
    which is a homeomorphism.
\end{theorem}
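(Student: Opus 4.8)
The plan is to follow the two-directional structure of the classical proof of Thurston's Grafting Theorem (in the spirit of Kulkarni--Pinkall and Kamishima--Tan, cf.\ \cite{KT92,Dum}), adapting each direction to the meromorphic and signed setting. First I would make the grafting map $\widehat{\text{Gr}}$ precise: given a signed hyperbolic structure $\sigma \in \mathcal{T}^\pm(\sm)$ and a signed measured lamination $\lambda \in \mathcal{ML}^\pm(\sm)$, I insert lunes equivariantly along the lift of $\lambda$ to the developing image in $\cp$, exactly as in the closed case, and then verify that the resulting projective structure has the prescribed pole orders $\mathfrak{n}$ and the prescribed signing at the punctures in $\mathbb{P}$. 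The crucial point is that the grafting operation respects the local models near the poles: near an irregular singularity (a pole of order at least $3$) this is governed by the analysis of Gupta--Mj, while near a regular singularity (a pole of order at most $2$) the local model must be established from scratch, together with the way the sign encodes the two sheets of the $2^{\lvert\mathbb{P}\rvert}$-fold branched cover.

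The core of the argument is the construction of the inverse, i.e.\ a canonical Thurston decomposition of each structure in $\mathcal{P}^\pm(\sm)$. For this I would equip each such structure with its Thurston (Kulkarni--Pinkall) metric, built from the maximal round disks osculating the developing image, and extract its bending lamination; collapsing the Euclidean (grafted) part recovers a hyperbolic structure, yielding a map $\mathcal{P}^\pm(\sm) \to \mathcal{T}^\pm(\sm)\times \mathcal{ML}^\pm(\sm)$. The main work here is to control this metric near the punctures and poles: near a regular singularity I expect the Thurston metric to limit to a complete hyperbolic cusp or cone neighborhood whose type is recorded by the sign, and I must check that the bending lamination has finite \emph{signed} mass and defines a genuine element of $\mathcal{ML}^\pm(\sm)$. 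This is precisely where the enhancement to regular singularities departs from Gupta--Mj.

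The remaining steps are comparatively routine. I would verify that the decomposition map is inverse to $\widehat{\text{Gr}}$ by tracing a grafted structure through the Thurston-metric construction and recovering the original data, using uniqueness of the pleated surface bounding the convex core. Continuity of $\widehat{\text{Gr}}$ follows from continuous dependence of the lune insertion on $(\sigma,\lambda)$, and continuity of the inverse from continuous dependence of the Thurston metric on the projective structure; combined with properness and the bijectivity established above, these upgrade the result to a homeomorphism.

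I anticipate the main obstacle to be the local analysis at the regular singularities. Unlike the irregular case, where the large pole order forces a definite asymptotic geometry, at a pole of order at most $2$ the monodromy is parabolic or elliptic and the developing map is only mildly singular, so controlling the Thurston metric and the mass of the bending lamination near such a point---and matching the two geometric possibilities to the sign defining the branched cover $\mathcal{P}^\pm(\sm)\to\mathcal{P}(\sm)$---requires care. A secondary difficulty is ensuring global consistency of the signs across all punctures in $\mathbb{P}$, so that the decomposition always lands in the correct sheet of the $2^{\lvert\mathbb{P}\rvert}$-fold cover, including along the branch locus where the two sheets coincide.
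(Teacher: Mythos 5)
Your plan follows the same architecture as the paper's proof: graft forward and verify the pole orders by local computation at each end, invert via the Kulkarni--Pinkall maximal-disk/pleated-plane decomposition, and then deal with the signs. Two of the steps you defer, however, are exactly where the real content lies, and as stated your plan does not contain the ideas needed to close them.

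First, the signed lift. You correctly flag that one must match the two geometric sign choices (orientation of a geodesic end, sign of the incident weight) to the two sheets of the branched cover $\mathcal{P}^\pm(\sm)\to\mathcal{P}(\sm)$, but you give no mechanism for doing so, and "the sign records whether the Thurston metric limits to a cusp or a cone" is not how the cover is defined --- the cover is branched over the locus where the \emph{exponent} $r_p=\pm 2\pi i\sqrt{1-2a_p}$ of the Schwarzian vanishes. The paper's resolution is an explicit computation (Lemma \ref{lem:regexp}) showing that grafting a geodesic end of length $l$ with total spiralling weight $\alpha$ produces exponent $\pm(l\mp i\alpha)$ depending on the spiralling direction; the assignment $(X_\sigma,\lambda_\tau)\mapsto l+i\alpha$ then intertwines with the squaring map defining the fiber product $\mathcal{P}(\sm)\times_{\mathbb{C}^m}\mathbb{C}^m$, and the universal property produces $\widehat{\text{Gr}}$ together with its bijectivity on fibers (the only coincidence of sheets being $l=\alpha=0$, which matches exponent zero). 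Without this computation, or an equivalent one, the signed statement is not proved. Second, in the inverse direction the issue is not finiteness of the mass of the bending lamination but its \emph{shape} at the ends: one must rule out that the Kulkarni--Pinkall lamination has a leaf exiting a flaring end transversally to the boundary geodesic, and show instead that finitely many leaves enter each cusp or spiral onto each boundary geodesic. The paper does this by classifying the local developing maps at a regular singularity (of the form $w^\theta$ or $w^{-n}+\log w$) and using a pleating-line detection lemma (Lemma \ref{pleating_line}) to locate an isolated pleat at the fixed point of the peripheral monodromy; your "control the Thurston metric near the puncture" gestures at this but supplies no argument. Finally, a cosmetic point: the paper avoids proving continuity of the inverse altogether by checking that both sides are cells of real dimension $2\chi$ and invoking Invariance of Domain, which is considerably less work than the properness-plus-continuity-of-inverse route you propose.
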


In the case when the set $\mathbb{P} =\emptyset$, this was proved in \cite{Gup-Mj}; the key technical step in the proof of Theorem \ref{thm1} is to determine  how the signings at the points of $\mathbb{P}$ play a role. In particular, we shall crucially use the relation between the signed grafting map and the Schwarzian derivative at these poles (see Lemma \ref{lem:regexp}). 

Along the way, we shall provide a ``grafting description"  of the projective structures corresponding to quadratic differentials with \textit{simple} poles -- see Corollary \ref{cor:simp} -- which could be of independent interest. 

\vspace{.05in}

Next, we introduce a monodromy map 
\begin{equation}\label{mmap} 
    \widehat{\Phi}: \mathcal{P}^\pm (\sm) \to \widehat{\bigchi}(\sm)
\end{equation}
where the target is the space of \textit{framed} $\pslc$-representations of the fundamental group $\pi_1(\mathbb{S} \setminus \mathbb{P})$ (see \cite[\S5]{Gup} for more on this space). We briefly recall here that a framed representation is a pair of a $\pslc$-representation $\rho$ together with a framing $\beta:F_\infty \to \cp$ which is a $\rho$-equivariant map defined on the lift of $\mathbb{M}$ to the ideal boundary of the universal cover (this set of ideal boundary points is the \textit{Farey set} $F_\infty$, following \cite[Section 1.3]{FG}). 
Such a (framed) monodromy map had been previously defined by Allegretti-Bridgeland \cite{All-Bri} for a subspace $\mathcal{P}^\ast (\sm)$ corresponding to meromorphic projective structures with \textit{no apparent singularities}. Briefly: at a regular singularity, they had defined the framing using the signing to choose a fixed point of the monodromy around it, and at an irregular singularity, they had defined the framing in terms of the asymptotics of the solutions of the Schwarzian equation in Stokes sectors. The latter description was shown to be equivalent to considering the asymptotic values of the developing map in \cite{Gup-Mj} (see \S4.1 of that paper). We extend this here to the case of regular singularities, to provide a more geometric definition of the framing in terms of the asymptotic behaviour of the developing map, that also applies at apparent singularities. 

\vspace{.05in} 

Our next theorem then characterizes the image of this framed monodromy map. Here, $\widehat{\bigchi}^\ast(\sm)$ denotes the subset of non-degenerate framed representations (see Definition \ref{def:deg}) that had been introduced in \cite{All-Bri}. 

\begin{theorem}\label{thm2} The image of $\widehat{\Phi}$ in \eqref{mmap} is precisely the space $\widehat{\bigchi}^\ast(\sm)$ of non-degenerate framed representations. 
\end{theorem}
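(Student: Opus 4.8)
The plan is to prove Theorem~\ref{thm2} by establishing the two inclusions separately: first that every framed representation in the image of $\widehat{\Phi}$ is non-degenerate, and second that every non-degenerate framed representation is realized by some signed meromorphic projective structure. For the forward inclusion, I would unpack the definition of non-degeneracy (Definition~\ref{def:deg}) and show directly that the framing arising from a developing map automatically satisfies it. The key point is that the framing $\beta:F_\infty \to \cp$ is defined geometrically via the asymptotic values of the developing map $f$ at the marked points; since $f$ is a local biholomorphism away from the poles and the framing at distinct marked points picks out distinct limiting directions in $\cp$, the required genericity/transversality conditions of Definition~\ref{def:deg} should follow from the local normal forms of the Schwarzian solutions at both regular and irregular singularities. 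Concretely, I would verify that adjacent framing points are distinct (so the framing is non-degenerate in the Fock--Goncharov sense) by analyzing the developing map near each pole.

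For the harder reverse inclusion --- that every non-degenerate framed representation arises as monodromy --- the natural strategy is to invoke Theorem~\ref{thm1} (the Grafting Theorem) to construct an explicit projective structure with the prescribed framed monodromy. The plan is as follows. Given a non-degenerate framed representation $(\rho,\beta) \in \widehat{\bigchi}^\ast(\sm)$, I would first produce a complex projective structure on $\mathbb{S}\setminus\mathbb{M}$ with holonomy $\rho$ and with the correct local behavior (pole orders $\mathfrak{n}$ and signings) at the marked points. Here I would lean on the structure theory already developed in \cite{Gup-Mj} and \cite{All-Bri}: for the $\mathbb{P}=\emptyset$ case such a realization result is known, and the new content is handling the regular (including apparent) singularities at the punctures $\mathbb{P}$. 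I would construct, near each such puncture, a local model projective structure whose monodromy matches the prescribed local conjugacy class and whose framing matches the chosen fixed point of $\rho$ dictated by the signing, and then glue these local models to a global structure using the non-degeneracy hypothesis to guarantee compatibility of the framings on overlaps.

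The main obstacle, I expect, is precisely this gluing/realization step at the regular singularities, and in particular showing that the \emph{framing} prescribed abstractly agrees with the framing \emph{induced by the developing map} of the constructed structure. This is where Lemma~\ref{lem:regexp} (the relation between the signed grafting map and the Schwarzian derivative at these poles) becomes essential: it controls the asymptotic expansion of the developing map at a regular singularity and thereby pins down which fixed point of the local monodromy the developing map converges to. I would use this expansion to match the signing-determined fixed point with the asymptotic value, thereby certifying that $\widehat{\Phi}$ of the constructed structure equals the prescribed $(\rho,\beta)$. The non-degeneracy hypothesis is used exactly to rule out the degenerate cases where the asymptotic value is ambiguous (e.g.\ when the local monodromy is parabolic or the two candidate fixed points coincide), which is why the image lands in $\widehat{\bigchi}^\ast$ and not a larger set.

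Finally, I would assemble the two inclusions. The non-degeneracy of every image point gives $\operatorname{Im}\widehat{\Phi} \subseteq \widehat{\bigchi}^\ast(\sm)$, and the realization construction gives the reverse containment $\widehat{\bigchi}^\ast(\sm) \subseteq \operatorname{Im}\widehat{\Phi}$, completing the proof. Throughout, I would take care to treat the apparent singularities (where the monodromy is trivial or $\pm\mathrm{id}$) as a genuinely new case beyond \cite{All-Bri}, since the signing there selects a branch of the developing map rather than a fixed point of a nontrivial monodromy, and it is this case that most directly requires the geometric, developing-map-based definition of the framing introduced in the paragraph preceding the theorem.
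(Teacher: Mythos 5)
There is a genuine gap in both directions, and the roles of the main tools are inverted relative to what actually works. For the inclusion $\operatorname{Im}\widehat{\Phi}\subseteq\widehat{\bigchi}^\ast(\sm)$, you propose to verify non-degeneracy by ``local normal forms of the Schwarzian solutions'' near each pole. That only addresses condition (3) of Definition \ref{def:deg} (adjacent boundary framings distinct, which for irregular singularities comes from Sibuya's asymptotics). Conditions (1) and (2) are \emph{global}: one must show the image of the framing is not concentrated on one or two points fixed by all the peripheral monodromies, and distinct marked points can perfectly well share an asymptotic value, so no amount of local analysis at the poles settles this. The paper's proof of this inclusion is exactly where the Grafting Theorem enters: writing $P=\widehat{Gr}(X,\lambda)$, one finds an immersed ideal triangle in $X$ with vertices in $\mathbb{M}$ whose lift is embedded and disjoint from the lift of $\lambda$ (Lemma \ref{lem:immT}); its developed image remains embedded after grafting, producing three distinct asymptotic values. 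In the exceptional case (a single marked point and no geodesic boundary in the relevant complementary component), one instead uses arcs of small transverse measure with respect to $\lambda$ so that the grafted lunes separating the corresponding plaques have small angular width and the developed images of three lifts of the marked point stay distinct. Your proposal contains no mechanism to produce these three distinct framing points.

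For the reverse inclusion $\widehat{\bigchi}^\ast(\sm)\subseteq\operatorname{Im}\widehat{\Phi}$, the ``construct local models near each puncture and glue'' strategy does not yield a projective structure with prescribed \emph{global} holonomy $\rho$; realizing a given representation is the entire difficulty and cannot be reduced to local gluing. The paper handles this direction by citing the existing machinery: by Lemma \ref{non-degeneracy} (ultimately \cite[Theorem 9.1]{All-Bri}) a non-degenerate framed representation admits, after a flip, well-defined Fock--Goncharov coordinates for some ideal triangulation, and the pleated-plane constructions of \cite{Gup}, \cite{Faraco-Gupta} and \cite{Gup-Mj_2} then produce a projective structure with that framed monodromy; one flips signs back at the end using the fact that changing the signing flips the framing. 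So the Grafting Theorem is used for the inclusion you call ``easy,'' and the inclusion you call ``hard'' is the one that is already essentially known. Lemma \ref{lem:regexp} plays no role in either inclusion; its job in the paper is to make the signed grafting map well defined via the fiber-product description, not to match framings in a realization argument.
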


 Note that Allegretti-Bridgeland had shown that the image of $\mathcal{P}^\ast (\sm)$ is \textit{contained} in $\widehat{\bigchi}^\ast(\sm)$ (see Theorem 6.1 of their paper). We use completely different techniques, applying our (Grafting) Theorem \ref{thm1}, to prove this inclusion for the monodromy map $\widehat{\Phi}$. In particular, this provides an alternative proof of \cite[Theorem 6.1]{All-Bri}. The opposite inclusion, needed to complete the proof of Theorem \ref{thm2} is already known as it follows from the constructions in \cite{Faraco-Gupta} and \cite{Gup-Mj} together with \cite[Theorem 9.1]{All-Bri} which implies that non-degenerate framed representations have Fock-Goncharov coordinates with respect to some ideal triangulation. 

Once again, Theorem \ref{thm2} had been proved in the case that $\mathbb{P}= \emptyset$ in \cite{Gup-Mj_2}, and in the ``opposite" case when $\mathbb{M} = \mathbb{P}$ in \cite{Gup} under the additional assumption of no apparent singularities. See also the recent paper of Nascimento (\cite{Nasci}) which discusses the case of projective structures with Fuchsian-type singularities, and Le Fils (\cite{LF23}) which handles the case of projective structures with branch-points. For a closed surface, the image of the monodromy map was characterized by Gallo-Kapovich-Marden in \cite{GKM}; the case of a punctured surface was left as an open question in that paper. 

\vspace{.05in} 

As an immediate corollary we obtain:

\begin{coro}
    A representation $\rho: \pi_1(\mathbb{S} \setminus \mathbb{P}) \to \pslc$ arises as the monodromy of a meromorphic projective structure in $\mathcal{P}^\pm (\sm)$ (forgetting the framing) if and only if there exists a framing $\beta$ such that the pair $(\rho, \beta)$ is non-degenerate. 
\end{coro}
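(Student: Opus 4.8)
The plan is to obtain this corollary directly from Theorem \ref{thm2} by unpacking what it says at the level of the forgetful map that discards the framing. Let $\pi:\widehat{\bigchi}(\sm)\to \bigchi(\sm)$ denote the map sending a framed representation $(\rho,\beta)$ to the underlying $\pslc$-representation $\rho$, and let $\mathrm{forget}:\mathcal{P}^\pm(\sm)\to \bigchi(\sm)$ be the composition of the monodromy map $\widehat{\Phi}$ with $\pi$; this $\mathrm{forget}$ is exactly ``the monodromy of a meromorphic projective structure, forgetting the framing'' that appears in the statement. The corollary is then the assertion that the image of $\mathrm{forget}$ equals the set of those $\rho$ that admit a framing $\beta$ making $(\rho,\beta)$ non-degenerate, i.e.\ that the image of $\mathrm{forget}$ equals $\pi\bigl(\widehat{\bigchi}^\ast(\sm)\bigr)$.

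The two inclusions follow formally once Theorem \ref{thm2} is in hand. For the forward direction, suppose $\rho$ arises as $\mathrm{forget}(\mathcal{Y})$ for some signed meromorphic projective structure $\mathcal{Y}\in\mathcal{P}^\pm(\sm)$. Then $\widehat{\Phi}(\mathcal{Y})=(\rho,\beta)$ for the framing $\beta$ canonically produced by the monodromy map, and Theorem \ref{thm2} gives $(\rho,\beta)\in\widehat{\bigchi}^\ast(\sm)$; this exhibits a framing $\beta$ with $(\rho,\beta)$ non-degenerate, as required. Conversely, suppose there is a framing $\beta$ with $(\rho,\beta)$ non-degenerate, so $(\rho,\beta)\in\widehat{\bigchi}^\ast(\sm)$. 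By the surjectivity half of Theorem \ref{thm2}, the framed pair $(\rho,\beta)$ lies in the image of $\widehat{\Phi}$, so there exists $\mathcal{Y}\in\mathcal{P}^\pm(\sm)$ with $\widehat{\Phi}(\mathcal{Y})=(\rho,\beta)$; applying $\pi$ shows $\mathrm{forget}(\mathcal{Y})=\rho$, so $\rho$ arises as monodromy with the framing forgotten. This establishes both inclusions and hence the corollary.

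I would then note the single point worth a remark, which is the only place any genuine content enters: the corollary must be read with the framing quantified existentially, since a given $\rho$ may carry many inequivalent framings and only some of them need be non-degenerate. The logical skeleton above is robust precisely because Theorem \ref{thm2} identifies the image of $\widehat{\Phi}$ as the \emph{full} set $\widehat{\bigchi}^\ast(\sm)$ of non-degenerate framed representations, so no compatibility condition between $\rho$ and $\beta$ beyond non-degeneracy of the pair is lost or gained under the projection $\pi$. If I wished to be careful about the well-definedness of $\widehat{\Phi}$ up to the appropriate equivalence (conjugation acting diagonally on $(\rho,\beta)$), I would observe that $\pi$ intertwines the conjugation actions, so passing to the quotient does not affect either inclusion.

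The ``main obstacle'' here is essentially nonexistent, since this is a formal corollary of Theorem \ref{thm2}: the only care needed is to make sure that the framing $\beta$ attached to $\mathcal{Y}$ by $\widehat{\Phi}$ is the same object as the framing quantified in the corollary, which it is by construction of the monodromy map. All the mathematical weight is carried by Theorem \ref{thm2}, and in particular by the surjectivity direction, which in turn rests on the Grafting Theorem \ref{thm1} and the known constructions from \cite{Faraco-Gupta}, \cite{Gup-Mj}, and \cite{All-Bri}; the corollary merely repackages that theorem as a statement about unframed representations.
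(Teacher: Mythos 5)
Your proposal is correct and matches the paper's intent exactly: the paper states this as an immediate corollary of Theorem \ref{thm2} with no further argument, and your formal unpacking via the forgetful map $\pi$ (both inclusions following from the identification of the image of $\widehat{\Phi}$ with $\widehat{\bigchi}^\ast(\sm)$) is precisely the intended deduction. No gaps.
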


In \S4 we provide an alternative characterization of the monodromy representations that appear, without involving framings -- see Corollary \ref{cor:reps}. In the case that $\mathbb{S}$ has no boundary components (i.e. $\mathbb{M} = \mathbb{P}$), this coincides with the representations described in Theorem 1.1 of \cite{Faraco-Gupta}. This paper thus provides an alternative proof of that result, sans the construction of affine structures that is discussed in detail in \cite{Faraco-Gupta}.

\vspace{.05in}

Finally, we also prove:

\begin{theorem}\label{thm3} The monodromy map $\widehat{\Phi}$ in \eqref{mmap} is a local homeomorphism.
\end{theorem}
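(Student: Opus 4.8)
The plan is to prove the stronger statement that $\widehat{\Phi}$ is a local biholomorphism, from which the stated local homeomorphism is immediate. Both sides carry natural complex structures: the source $\mathcal{P}^\pm(\sm)$ through the Schwarzian description of \eqref{schw}, where a first-order deformation of a structure with underlying data $(X,q)$ is recorded by a deformation of the pointed complex structure $X$ together with a meromorphic quadratic differential $\dot q$ whose poles are bounded by the prescribed orders; and the target, whose image is by Theorem \ref{thm2} exactly $\widehat{\bigchi}^\ast(\sm)$, through the Fock--Goncharov structure on the moduli of framed representations. The map $\widehat{\Phi}$ is holomorphic, since the solutions of \eqref{schw}, and hence the developing map, the monodromy $\rho$, and the asymptotic data defining the framing $\beta$, all depend holomorphically on $(X,q)$. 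Given holomorphicity, it suffices to verify that the two spaces have equal complex dimension and that $d\widehat{\Phi}$ is injective at every point; the inverse function theorem then finishes the proof.

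The first step is a dimension count. By Riemann--Roch the quadratic-differential fibre has dimension $3g-3+\sum_i n_i + 2\lvert\mathbb{P}\rvert$, and the pointed Teichm\"uller base contributes $3g-3+k+\lvert\mathbb{P}\rvert$, so $\dim_{\mathbb{C}}\mathcal{P}^\pm(\sm)=6g-6+\sum_i n_i + k + 3\lvert\mathbb{P}\rvert$; the signing makes $\mathcal{P}^\pm$ a branched cover and contributes nothing. On the target, the tangent space to $\widehat{\bigchi}^\ast(\sm)$ at $(\rho,\beta)$ is the framed deformation cohomology, a relative version of $H^1(\pi_1(\mathbb{S}\setminus\mathbb{P});\mathrm{Ad}\,\rho)$ determined by the framing, whose dimension is computed from an ideal triangulation following Fock--Goncharov; one checks it equals the same number. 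The match is term-by-term: at each $p\in\mathbb{P}$ the framing is \emph{discrete}, being the signing-selected datum of Lemma \ref{lem:regexp}, so it adds no continuous modulus and these points contribute only the $3\lvert\mathbb{P}\rvert$ of the underlying punctured moduli, while each pole of order $n_i$ pairs with its boundary and Stokes framing data to yield $\sum_i n_i + k$.

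The crux, and the step I expect to be the main obstacle, is the injectivity of $d\widehat{\Phi}$ --- the meromorphic, framed analogue of the rigidity underlying Hejhal's theorem. Heuristically, a kernel vector is a first-order deformation fixing the framed monodromy; triviality of the monodromy variation makes its $\mathrm{Ad}\,\rho$-valued cocycle a coboundary, so that after absorbing an infinitesimal global M\"obius transformation the developing map $f$ deforms equivariantly. The rigorous content is then a vanishing theorem: I would identify $\ker d\widehat{\Phi}$ with the space of such equivariant deformations modulo the infinitesimal M\"obius ones --- equivalently the $H^0$ of a flat adjoint bundle on $X$ with prescribed pole conditions --- and show it vanishes, whence by M\"obius-invariance of the Schwarzian the underlying projective structure, and thus the tangent vector, is unchanged. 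The new content lies entirely at the poles: the framing conditions --- the signing-selected fixed point of the local monodromy at a regular or apparent singularity (Lemma \ref{lem:regexp}) and the Stokes asymptotics at an irregular singularity --- impose exactly the boundary behaviour forcing this $H^0$ to vanish. Establishing this vanishing \emph{uniformly} across regular, apparent and irregular singularities is the heart of the argument.

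Finally, injectivity together with equality of dimensions upgrades $d\widehat{\Phi}$ to an isomorphism at every point, and holomorphicity then yields a local biholomorphism by the inverse function theorem, proving Theorem \ref{thm3}. If one prefers to stay in the topological category, the same local injectivity, combined with the continuity of $\widehat{\Phi}$ and the manifold structure of both $\mathcal{P}^\pm(\sm)$ (supplied by Theorem \ref{thm1}) and $\widehat{\bigchi}^\ast(\sm)$ of equal dimension, allows one to invoke invariance of domain and conclude directly that $\widehat{\Phi}$ is an open local homeomorphism onto $\widehat{\bigchi}^\ast(\sm)$.
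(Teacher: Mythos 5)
Your proposal takes a genuinely different route from the paper, but it contains a gap at exactly the point you yourself flag as ``the heart of the argument'': the injectivity of $d\widehat{\Phi}$ is never actually established. You describe what a proof would look like --- identify $\ker d\widehat{\Phi}$ with an $H^0$ of a flat adjoint bundle with pole conditions and ``show it vanishes'' --- but this vanishing is the entire content of the theorem, and it is not routine. Even in the closed case this is Hejhal's theorem, whose proof is substantive; in the meromorphic setting one must additionally control deformations supported near the singularities, where the relation between the framing data (asymptotic values, signing-selected fixed points, Stokes data) and the cohomological boundary conditions is precisely what needs to be worked out, uniformly across elliptic, parabolic, loxodromic, apparent and irregular cases. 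The dimension count on the target side is likewise only asserted (``one checks it equals the same number''). A secondary issue: holomorphicity of $\widehat{\Phi}$ on all of $\mathcal{P}^\pm(\sm)$ is not automatic at apparent singularities, where the framing is defined by asymptotic values rather than by eigenvectors of the peripheral monodromy; the paper only obtains this a posteriori (in Corollary \ref{cor:biholo}) via Riemann's removable singularity theorem, \emph{after} the local homeomorphism property is in hand, so you cannot freely assume it at the outset.

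For comparison, the paper's proof is topological rather than deformation-theoretic: it uses the Grafting Theorem (Theorem \ref{thm1}) to write two nearby structures with the same framed monodromy as graftings of pairs $(X,\lambda)$ and $(X',\lambda')$; Corollary \ref{cor:mon} and closeness of the laminations force the end data (end type, boundary length, total incident weight and spiralling direction) to agree; Lemma \ref{lem:isotopy} and \cite[Proposition 6.3]{Gup-Mj} then make the developing maps literally agree near all ends after an isotopy; and the relative Ehresmann--Thurston principle for the compact core finishes the local injectivity, with Invariance of Domain giving the local homeomorphism. If you want to pursue your inverse-function-theorem strategy, you would need to supply the vanishing theorem in full; as written, the proposal reduces the theorem to an unproved statement of comparable difficulty.
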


For a closed surface $S$, the fact that the monodromy map from the space of projective structures to the $\pslc$-representation variety 
\begin{equation}\label{pg}
\Phi:\mathcal{P}_g \to \bigchi(S)
\end{equation}
is a local homeomorphism was a classical result of Hejhal in \cite{Hejhal}. This can be considered a special case of the Ehresmann-Thurston Principle concerning the holonomy map from the deformation space of geometric structures on a closed manifold to the corresponding representation-variety (see, for example, \cite{GoldmanICM}). 
This local homeomorphism result was also proved in the case of projective structures with regular singularities having loxodromic monodromy by Luo \cite{Luo} (\textit{c.f}.\ the discussion in \S2.3 of \cite{Gup}). Recently, it was also proved for projective structures with ``cusps" (that allow apparent singularities) in \cite[Theorem 5.6]{Baba3}, and those without apparent singularities and with fixed residues at the poles in \cite{Seran}. The work in \cite{Gup-Mj_2}  had proved the above Theorem in the case that $\mathbb{P}= \emptyset$ (i.e.\ only irregular singularities); our proof here follows their strategy and reduces to an application of a relative version of the Ehresmann-Thurston Principle.

\vspace{.05in}

Using the work of \cite{All-Bri}, we can in fact conclude:

\begin{coro}\label{cor:biholo} The monodromy map $\widehat{\Phi}$ is a local biholomorphism.
\end{coro}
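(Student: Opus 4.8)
The plan is to bootstrap Corollary \ref{cor:biholo} from Theorem \ref{thm3} by upgrading the local homeomorphism to a local biholomorphism, using the complex-analytic structure on both the domain and the target together with the results imported from \cite{All-Bri}. The key observation is that $\widehat{\Phi}$ is a holomorphic map between complex manifolds of the same dimension: the space $\mathcal{P}^\pm(\sm)$ carries a natural complex structure (coming from its description via meromorphic quadratic differentials, and the identification of $\mathcal{P}(\sm)$ with a complex vector space/affine bundle over the relevant Teichm\"{u}ller space), and the target $\widehat{\bigchi}(\sm)$ is a complex variety --- indeed, by the work of Fock-Goncharov and its use in \cite{All-Bri}, the space of framed $\pslc$-representations has a cluster/complex-analytic structure, and on the non-degenerate locus $\widehat{\bigchi}^\ast(\sm)$ one has the Fock-Goncharov coordinates providing holomorphic charts.

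First I would verify that $\widehat{\Phi}$ is holomorphic. The developing map and hence the monodromy representation depend holomorphically on the defining data (the underlying Riemann surface $X$ and the meromorphic quadratic differential $q$), because the solutions of the Schwarzian equation \eqref{schw} depend holomorphically on parameters --- this is a standard consequence of holomorphic dependence of solutions of linear ODEs on holomorphic coefficients. The framing $\beta$, defined via asymptotics of the developing map (at irregular singularities) and via the signing-selected fixed point of the local monodromy (at regular singularities), likewise varies holomorphically, since eigenvectors/fixed points of a holomorphically varying family of loxodromic (or otherwise non-parabolic) matrices vary holomorphically. Thus $\widehat{\Phi}$ descends to a holomorphic map onto $\widehat{\bigchi}^\ast(\sm)$, whose image is exactly this space by Theorem \ref{thm2}.

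The crux is then purely formal: a holomorphic map between complex manifolds that is a local homeomorphism is automatically a local biholomorphism. Concretely, by Theorem \ref{thm3} the map $\widehat{\Phi}$ is a local homeomorphism, so in suitable holomorphic coordinates it is an injective holomorphic map on a neighborhood of each point. An injective holomorphic map between equidimensional complex manifolds has everywhere-nonvanishing Jacobian (by the holomorphic inverse function theorem, since injectivity forces the derivative to be nonsingular --- a holomorphic map with degenerate derivative at a point cannot be locally injective), and hence its local inverse is also holomorphic. Here I would need to confirm that $\dim_{\mathbb{C}}\mathcal{P}^\pm(\sm) = \dim_{\mathbb{C}}\widehat{\bigchi}^\ast(\sm)$; this is an equidimensionality count already implicit in the fact that Theorem \ref{thm3} yields a local homeomorphism between the two spaces (a local homeomorphism between manifolds forces equality of real, hence complex, dimensions).

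The main obstacle I anticipate is not the inverse-function-theorem step, which is routine, but rather establishing rigorously that the target $\widehat{\bigchi}^\ast(\sm)$ is a complex manifold on which $\widehat{\Phi}$ is holomorphic, in a manner compatible with the smooth structure used in Theorem \ref{thm3}. This is precisely where the phrase ``using the work of \cite{All-Bri}'' does the heavy lifting: Allegretti-Bridgeland equip the non-degenerate framed representation space with Fock-Goncharov cluster coordinates, giving it the structure of a complex manifold, and one must check that the framed monodromy map is holomorphic with respect to these coordinates and the complex structure on $\mathcal{P}^\pm(\sm)$. Given Theorem \ref{thm3} (local homeomorphism) and this holomorphicity, the upgrade to a local biholomorphism is immediate by the argument above.
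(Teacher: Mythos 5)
Your overall architecture (holomorphicity of $\widehat{\Phi}$ plus Theorem \ref{thm3} plus the fact that an injective holomorphic map between equidimensional complex manifolds is a biholomorphism onto its image) matches the paper's, but there is a genuine gap in the step where you establish holomorphicity of $\widehat{\Phi}$. Your argument for the holomorphic dependence of the framing rests on the claim that ``eigenvectors/fixed points of a holomorphically varying family of loxodromic (or otherwise non-parabolic) matrices vary holomorphically.'' This is exactly where the difficulty lies: at a regular singularity whose peripheral monodromy is parabolic or the identity (in particular at every apparent singularity), the framing is \emph{not} defined by a signing-selected eigenvector but by the unique asymptotic value of the developing map (see \S4.1.2), and as the exponent $r_p$ approaches $2\pi i\mathbb{Z}$ the two fixed points collide or the eigenvector selection degenerates. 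Your parenthetical ``or otherwise non-parabolic'' silently excludes precisely the locus where the extension of the monodromy map beyond \cite{All-Bri} is needed, so your argument only reproves holomorphicity on the subset $\mathcal{P}^{\ast}$ where \cite{All-Bri} already established it, and says nothing about the new points of $\mathcal{P}^{\pm}$.

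The paper closes this gap differently: it observes that the projective structures with apparent singularities are contained in a proper analytic subset of $\mathcal{P}^{\pm}$, locally cut out by the equations $r_p = 2\pi i n_p$ for the holomorphic exponent functions $r_p$ (using Lemmas \ref{regular_grafting} and \ref{lem:regexp}); it then invokes holomorphicity of $\widehat{\Phi}$ on the complement from \cite[Theorem 1.1]{All-Bri}, and extends across the analytic subset by Riemann's removable singularity theorem, the needed local boundedness/continuity being supplied by Theorem \ref{thm3}. To repair your proposal you would either need to supply this removable-singularity argument, or give a genuinely uniform construction of the framing that is visibly holomorphic through the parabolic/identity locus --- the latter is not routine, since the framing changes its very definition there.
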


Theorem \ref{thm3} verifies a conjecture by Allegretti in \cite{All2}, and Corollary \ref{cor:biholo} answers a question raised in \cite{All-Bri}.

\vspace{.05in}

In the context of meromorphic projective structures, it remains to \textit{``explore the non-uniqueness of projective structures with given monodromy"}, quoting from \cite[Problem 12.2.1]{GKM}. For a closed surface, the fibers of the monodromy map \eqref{pg} are discrete by Hejhal's result, and have been studied by Baba in \cite{Baba1, Baba2}; it is conceivable that some of the techniques developed there can be generalized to the case of the framed monodromy map $\widehat{\Phi}$.

\vspace{.05in}

Recall that the Schwarzian equation \eqref{schw} is a first-order linear ODE on the Riemann surface. In a broader context, linear differential equations of \textit{higher} order $n\geq 2$ can be considered (see, for example, \cite{Hejh2}). These determine what are called $\slnc$-opers in the literature (see \cite[\S15.6]{FBZ} and the references therein); indeed, projective structures form the special case of $\slc$-opers. Another direction to pursue would be to characterize the surface-group representations into $\pslnc$ that arise as the monodromy of $\text{SL}_n$-opers. In contrast with the $n=2$ case, it follows from a dimension count that for $n>2$ such representations form a subset of positive codimension in the $\pslnc$-character variety, so the analogue of Theorem \ref{thm3} does not hold. The monodromy map has been studied recently by Alley in \cite{Alley}, in a special case of meromorphic {cyclic} $\slnc$-opers on $\cp$.

Note that the case of \textit{first-order} linear ODEs on a Riemann surface is also interesting, as they relate to the theory of translation structures on surfaces associated with abelian differentials. In that context, the monodromy is determined by the periods of the differential, and the analogue of Theorem \ref{thm2} for closed surfaces was classical work of Haupt in \cite{Hau20} that was re-proved by Kapovich in \cite{Kap20}, and extended recently in work 
of Le Fils (\cite{LF20}) and Bainbridge-Johnson-Judge-Park (\cite{BJJP22}). 
For punctured surfaces and meromorphic differentials, an analogue of Theorem \ref{thm2} was proved in recent work of Chenakkod-Faraco-Gupta (\cite{CFG22}), that was extended in  Chen-Faraco (\cite{CF24}).

\vspace{.05in}

\subsection*{Acknowledgements.} The first-named author carried out most of this work while an undergraduate student at the Indian Institute of Science, and is grateful to Kishore Vaigyanik Protsahan
Yojana (KVPY) for their fellowship and contingency grant. The second-named author is grateful to the Department of Science and Technology, Govt.of India for its support via grant no. CRG/2022/ 001822. He also thanks Gianluca Faraco and Mahan Mj for their interest and their collaboration in previous papers that led up to this.   Both authors would like to thank Dylan Allegretti and Lorenzo Ruffoni for helpful correspondences, and the anonymous referee for helpful comments. This work is also supported by the DST FIST program - 2021 [TPN - 700661].

\section{Signed parameter Spaces}


As mentioned in the Introduction, let $(\sm)$ be a marked and bordered surface of genus $g$ and $k$ boundary components, and let $\mathfrak{n}=\{n_1,n_2,...,n_k\}$ with $n_i\geq 3$ be the associated integer-tuple. Here recall that $(n_i-2)$ is the number of marked points on the $i$-th boundary component, for each $1\leq i\leq k$.  We shall also denote by $m=\lvert \mathbb{P}\rvert$ the number of marked points in the interior of $\mathbb{S}$. 
We define  
\begin{equation}\label{eq:chi} 
    \chi=6g-6+\sum_{i=1}^k(n_i+1)+3m
\end{equation}
and we shall assume this is positive, throughout this article. This equivalent to requiring that if $g=0$, then $\lvert \mathbb{M} \rvert \geq 3$.

\begin{subsection}{The Enhanced Teichm\"{u}ller Space}  We shall define the space of hyperbolic structures on the marked and bordered surface $(\sm)$. Such a hyperbolic structure $X$ will have either a cusp or a geodesic boundary component at each interior puncture $\mathbb{P} \subset \mathbb{M}$, and each boundary component of $\mathbb{S}$ with marked points will be a ``crown end" (see \cite[\S3.2]{Gup-Mj}) where each boundary arc between marked points is a bi-infinite geodesic (a \textit{side} of the crown).  

\begin{defn}
A marking of a hyperbolic surface $X$  as above by $(\mathbb{S}, \mathbb{M})$ is a homeomorphism $f: \mathbb{S}\backslash\mathbb{M}\rightarrow X^\circ$, where $X^\circ$ is the surface obtained from $X$ by removing the boundary components homeomorphic to $S^1$. Note that such a marking must take the set of interior punctures $\mathbb{P}$ to geodesic ends or cusps, and boundary arcs between marked points to bi-infinite geodesics of the crown ends. Two such markings $(f_1,X_1)$ and $(f_2,X_2)$ are defined to be equivalent if there is an isometry $g:X_1 \rightarrow X_2$ such that $g\circ f_1$ is isotopic to $f_2$ relative to the marked points $\mathbb{M}$. Then, the set of such markings $(f,X)$ under this equivalence relation forms the  Teichm\"{u}ller space $\mathcal{T}(\mathbb{S},\mathbb{M})$.
\end{defn}

Now, we want to provide an additional signing parameter to our hyperbolic surfaces, namely an orientation to the boundary components homeomorphic to $S^1$. For each component, this parameter is a choice of an element in $\{\pm 1\}$, depending on whether it agrees with the orientation induced from that on the surface.  Recall that there are at most  $m$ such boundary components. Thus, we get:
\begin{defn}[Definition 3.4 of \cite{All}]
 The enhanced Teichm\"{u}ller space $\mathcal{T}^{\pm}(\mathbb{S},\mathbb{M})$ is a $2^m$-fold branched cover of $\mathcal{T}(\mathbb{S},\mathbb{M})$ (branched on the set of surfaces with at least one cusp end), with branching data given by a choice of a sign at each geodesic end.
\end{defn}
 Parametrizing this space with shear coordinates, we have the following:

\begin{theorem}[Proposition 3.5. of \cite{All}]\label{dim_teichmuller}
For a pair $(\mathbb{S},\mathbb{M})$ with $|\mathbb{M}|\geq3$ if $g(\mathbb{S})=0$, we have that $\mathcal{T}^{\pm}(\mathbb{S},\mathbb{M})$ is homeomorphic to $\mathbb{R}^{\chi}$. 
\end{theorem}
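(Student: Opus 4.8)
The plan is to parametrize $\mathcal{T}^{\pm}(\mathbb{S},\mathbb{M})$ by \emph{shear coordinates} attached to an ideal triangulation, in the spirit of Thurston, Penner and Fock--Goncharov. First I would fix an ideal triangulation $T$ of $(\mathbb{S},\mathbb{M})$: a maximal collection of disjoint arcs with endpoints at the marked points $\mathbb{M}$ that cuts the surface into ideal triangles. The standing hypothesis $\chi>0$ (equivalently $\lvert\mathbb{M}\rvert\geq 3$ when $g=0$) is precisely what guarantees that such a triangulation exists. To each \emph{internal} edge $e$ of $T$ one attaches a real number $x_e$, its shear coordinate: lifting to the universal cover and developing into $\mathbb{H}$, the four ideal vertices of the two triangles adjacent to $e$ map to points of $\partial\mathbb{H}=\mathbb{R}\cup\{\infty\}$, and $x_e$ is the logarithm of their cross-ratio. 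I would then show that the resulting map $\mathcal{T}^{\pm}(\mathbb{S},\mathbb{M})\to\mathbb{R}^{E_{\mathrm{int}}}$, where $E_{\mathrm{int}}$ is the number of internal edges, is a homeomorphism.

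The second step is a dimension count identifying $E_{\mathrm{int}}$ with $\chi$. Write $V$ for the number of vertices (all of $\mathbb{M}$), $E_{\mathrm{bdry}}$ for the boundary edges, and $F$ for the triangles. Since each boundary circle carrying $(n_i-2)$ marked points is cut into $(n_i-2)$ boundary arcs, one has $V=m+\sum_i(n_i-2)$ and $E_{\mathrm{bdry}}=\sum_i(n_i-2)$. Counting edge--triangle incidences gives $3F=2E_{\mathrm{int}}+E_{\mathrm{bdry}}$, while the Euler characteristic of the compact surface yields $V-(E_{\mathrm{int}}+E_{\mathrm{bdry}})+F=2-2g-k$. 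Eliminating $F$ and substituting the expressions for $V$ and $E_{\mathrm{bdry}}$ gives $E_{\mathrm{int}}=6g-6+\sum_i(n_i+1)+3m=\chi$, so the target is exactly $\mathbb{R}^{\chi}$.

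It then remains to prove that the shear map is a homeomorphism. Continuity and injectivity follow from the standard fact that the collection $(x_e)_e$ reconstructs the developing map and holonomy of the structure, hence the marked hyperbolic surface up to isometry. For surjectivity one conversely glues ideal triangles with prescribed shears; the delicate point is the completion at an interior puncture $p\in\mathbb{P}$. The peripheral holonomy around $p$ is governed by the sum of the shears of the edges incident to $p$: when this quantity vanishes the end is a cusp, and otherwise it is a geodesic boundary circle for which there are two ways to close up the structure. \textbf{This ambiguity, and its resolution, is the crux of the argument.} The signing datum of the enhanced space picks one of the two geodesic completions, so that $\mathcal{T}^{\pm}$ (rather than the unsigned $\mathcal{T}$) maps \emph{onto} all of $\mathbb{R}^{\chi}$; moreover the $2^{m}$-fold branching of $\mathcal{T}^{\pm}$ over $\mathcal{T}$ occurs exactly along the cusped locus, where the peripheral sum is zero and the two signs coalesce. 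Verifying that the signing matches the sign of this boundary parameter, and that the glued structure depends continuously on the shears with continuous inverse, completes the proof.
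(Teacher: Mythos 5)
Your proposal is correct and follows essentially the same route as the paper, which does not give its own argument but simply quotes Proposition 3.5 of \cite{All} after remarking that $\mathcal{T}^{\pm}(\mathbb{S},\mathbb{M})$ is ``parametrized with shear coordinates''; your sketch (ideal triangulation, cross-ratio/shear coordinates, the Euler-characteristic count giving $E_{\mathrm{int}}=\chi$, and the signing resolving the two spiralling directions at geodesic ends, with branching over the cusped locus) is exactly the standard argument behind that cited result.
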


(Here the expression for $\chi$ is given in \eqref{eq:chi}.) 

\vspace{.1in} 

We denote an element of this set by a tuple $(X,f,\sigma)$, where $X$ is the hyperbolic surface, $f:\mathbb{S}\backslash\mathbb{M}\rightarrow X^{\circ}$ is the marking, and $\sigma$ is the signing of each geodesic boundary component of $X$. 

\end{subsection}

\begin{subsection}{The Enhanced Space of Measured Laminations}\label{enhanced_space}

Let $(X,f,\sigma)$ be an element of $\mathcal{T}^{\pm}(\mathbb{S},\mathbb{M})$ as above. Now, we define the space of measured geodesic laminations on this element. First, we define a geodesic lamination on the surface:
\begin{defn}
A geodesic lamination on $X$ is a closed subset $L$ in $X$ that is a disjoint union of simple closed geodesics or a bi-infinite geodesics on $X$, including the boundary geodesics of $X$.
\end{defn}
 Since we always include the boundary geodesics, it will be helpful to consider $L$ as the disjoint union of two subsets, $L^{\circ}\in X^{\circ}$ and $\partial X$. Since the geodesics in a lamination cannot meet the boundary geodesics, it follows that the leaves of the lamination must satisfy the following at the neighbourhood of an end of $X$:
\begin{itemize}
    \item If the end is a cusp, the leaves of the geodesic exiting the end must go inside the cusp, without any spiralling. This can be seen by lifting to the universal cover $\mathbb{H}^2$; if we assume that the cusp end $C$ lifts to a horodisk $H = \{z\ \vert\ \text{Im}(z) > h\}$ in the upper half-plane model of $\mathbb{H}^2$, such that $C = H/\langle z \mapsto z+1 \rangle$, then any such geodesic leaf lifts to a vertical line that remains in a single lift of the fundamental domain. 
    
    \item If the end is a geodesic end, then each leaf entering a neighbourhood of the end must spiral in a direction around the geodesic and accumulate on it. Since the leaves are disjoint, it follows that (if there is more than one leaf) they must all spiral in the same direction.
    \item If the end is a crown end, similar to a cusp, each leaf entering the end must go into one of the cusps of the crown. 
\end{itemize}
We recall another useful Lemma regarding geodesic laminations:
\begin{lem}
Given a geodesic lamination $L$ on a hyperbolic surface $X$, there is a $\pi_1(X)$-invariant ideal triangulation of the universal cover $\widetilde{X}$ such that no leaf of the lamination intersects the interior of a triangle.
\end{lem}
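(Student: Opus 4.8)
The plan is to construct the triangulation by subdividing the complementary regions of $L$ on $X$ itself, and then to lift everything to $\widetilde{X}$ so that $\pi_1(X)$-invariance is automatic. Concretely, I would pass to the open subsurface $X\setminus L$. Since $(X,f,\sigma)$ has finite topological type and finite area, and $L$ is a geodesic lamination, $X\setminus L$ has only finitely many connected components, each of finite area; call these the complementary regions. The goal is to insert into each complementary region finitely many complete geodesics (or geodesics running out to ideal points) that lie in its interior, cutting it into ideal triangles; equivalently, to enlarge $L$ to a maximal geodesic lamination $\widehat{L}\supseteq L$ whose complementary regions are all ideal triangles. Because the inserted geodesics are interior to the complementary regions, they are disjoint from $L$ by construction.

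The main content is to understand a single complementary region $R$. Its metric completion is a finite-area hyperbolic surface with totally geodesic boundary, each boundary geodesic being a leaf of $L$ (recall that $L$ contains $\partial X$), and with all of its ends being ideal points. Here I would invoke the description of leaf behaviour near the ends of $X$ recalled above: leaves enter a cusp or a crown cusp without spiralling, and spiral onto a geodesic end all in the same direction. This shows that each end of $R$ is an ideal vertex---a cusp of $X$, a crown tip, or a ``spike'' where two boundary leaves of $R$ share an endpoint on $\partial_\infty\mathbb{H}^2$---so that $R$ is assembled from finite-sided ideal polygons and crowns. I would then triangulate $R$ in the usual way: first cut it along finitely many disjoint simple geodesic arcs joining ideal vertices, so as to reduce it to simply connected ideal polygons (and half-open regions at cusps), and then fill in diagonals. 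All such arcs are chosen interior to $R$, and hence remain disjoint from $L$ and from one another.

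Lifting to the universal cover, the preimage $\widetilde{L}$ together with the preimages of all inserted geodesics decomposes $\widetilde{X}$ into ideal triangles. Since every edge is the lift of an object defined on $X$, the resulting triangulation is tautologically $\pi_1(X)$-invariant, and every leaf of $\widetilde{L}$ lies on the boundary of the adjacent complementary triangles, so that it never meets the interior of any triangle. This is precisely the assertion of the lemma.

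The step I expect to be the main obstacle is the treatment of complementary regions that fail to be simply connected or that carry a cyclic symmetry: namely the annular regions collaring a geodesic end, onto which leaves spiral, and the crown regions. For these the naive choice of diagonals is not well defined, and the cutting arcs must be chosen compatibly with the spiralling direction---which is exactly the data recorded by the signing $\sigma$---and with the stabilizer of the region, so that the arcs stay embedded and disjoint after lifting. I would handle this case directly on the (simply connected) lifts in $\widetilde{X}$: choose one representative per $\pi_1(X)$-orbit of complementary regions, triangulate it by ideal arcs in a manner invariant under its cyclic stabilizer, and then propagate the choice over the orbit by the group action. The finiteness of the number of orbits is what makes these choices mutually consistent.
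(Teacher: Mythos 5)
Your argument is essentially the paper's own proof: take the metric completion of $X\setminus L$, which consists of finitely many hyperbolic pieces with cusps, crowns or geodesic boundary, ideally triangulate each piece by finitely many geodesic arcs (allowing arcs that spiral onto closed geodesic boundary where needed), and lift everything to $\widetilde{X}$ so that invariance is automatic. The one point to tighten is that components with closed geodesic boundary arise not only from the geodesic ends of $X$ but also along any closed leaf of $L$ in the interior, so your claim that every end of $R$ is an ideal vertex is not literally true; however, the device you describe in your last paragraph --- choosing spiralling diagonals invariantly under the cyclic stabilizer of the region and propagating by the group action --- handles those components in exactly the same way.
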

\begin{proof}
The completion of the complement of the geodesic lamination $L$ comprises finitely many connected hyperbolic surfaces that have crowns, cusps or geodesic boundary (see, for example \cite[pg. 7]{Bonahon}). One can equip each of these components with an ideal triangulation, where the (finitely many) edges of the triangulation are geodesic lines between cusps or crown boundary cusps or spiralling onto geodesic boundary components. The lift of these geodesic lines, together with the lift of $L$, defines a $\pi_1(X)$-invariant ideal triangulation of the universal cover.
\end{proof}

\begin{lem}\label{lem:lam}
Given a geodesic lamination $L$, for each end of $X$ there is a neighbourhood of that end such that only finitely many leaves of the lamination enter it. For a crown end, we can take the neighbourhood to be the whole crown.
\end{lem}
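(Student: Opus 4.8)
The plan is to treat the three kinds of ends separately, using as the main engine the finite decomposition of the complement of $L$ provided by the preceding Lemma: the completion of $X \setminus L$ consists of finitely many finite-type hyperbolic subsurfaces, each carrying a finite ideal triangulation whose vertices lie at the cusps, geodesic ends and crown cusps. Equivalently, $L$ is the union of finitely many minimal sublaminations together with finitely many isolated (non-recurrent) leaves; the leaves of a minimal sublamination are recurrent and hence never escape into a cusp or spiral onto a boundary geodesic, so every leaf that enters an end is one of the finitely many isolated leaves, or a boundary leaf of one of the finitely many complementary regions. Each case below then amounts to choosing the neighbourhood small enough that only such leaves can reach it.

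For a cusp, I would lift it to a precisely invariant horoball $H = \{\mathrm{Im}(z) > h_0\}$ with cusp group $\langle z \mapsto z+1\rangle$, and pass to a deep horoball $\{\mathrm{Im}(z) > h\}$ with $h \geq \max(h_0, 1/2)$. By the no-spiralling property, any leaf reaching this depth lifts either to a vertical line $\{\mathrm{Re}(z) = c\}$ ending at the cusp, or to a semicircle peaking above height $h$; in the latter case the semicircle has Euclidean radius $> h \geq 1/2$, and a short computation shows such an arc self-intersects modulo $z \mapsto z+1$. Since $H$ is precisely invariant this would force a self-intersection of the leaf in $X$, contradicting simplicity, so only vertical leaves enter the neighbourhood $N_h$. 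Finiteness of these vertical leaves follows because an end escaping into the cusp is non-recurrent, and by the finite decomposition of the preceding Lemma a geodesic lamination on a finite-type surface has only finitely many non-recurrent leaf-ends. This last exclusion is the step I expect to be the main obstacle: locally two geodesics sharing the ideal endpoint $\infty$ are automatically disjoint, so disjointness alone does not prevent an infinite family of leaves accumulating on the cusp point, and one genuinely needs the global finiteness of the complementary decomposition.

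For a geodesic end with boundary geodesic $\delta$, I would first note that any leaf coming arbitrarily close to $\delta$ must accumulate on it and therefore spiral onto it; contrapositively, every leaf that does not spiral onto $\delta$ stays at a definite positive distance from $\delta$. The leaves that do spiral onto $\delta$ are non-recurrent ends, hence finite in number, so a sufficiently thin collar neighbourhood of $\delta$ meets only $\delta$ itself and these finitely many spiralling leaves. Finally, for a crown end I would take the neighbourhood to be the whole crown. The crown has finitely many cusps, one for each boundary arc between consecutive marked points, and its geodesic sides are themselves boundary leaves of $L$; since leaves are disjoint, any leaf entering the interior of the crown cannot cross these sides and must approach through one of the crown cusps. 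By the cusp argument each crown cusp receives only finitely many leaf-ends, and as every leaf has at most two ends, only finitely many leaves meet the crown, completing the three cases.
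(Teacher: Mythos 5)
Your proposal is correct, and it rests on the same essential input as the paper's proof: the finite complementary decomposition of $X\setminus L$ furnished by the preceding Lemma. Where the two arguments diverge is in how that input is converted into the finiteness statement. The paper descends the ideal triangulation to $X$, counts its triangles via Gauss--Bonnet (there are $-2\chi(X)$ of them), and bounds the number of leaves entering the ends by $3$ times the number of triangles -- a single, uniform, quantitative count. You instead invoke the structure theory of geodesic laminations (leaf-ends escaping into an end are non-recurrent, and there are only finitely many non-recurrent leaf-ends), deriving that finiteness from the same complementary decomposition, and then spend most of your effort on the per-end local geometry: the horoball/semicircle computation showing that in a sufficiently deep cusp neighbourhood every leaf is vertical, the collar argument at a geodesic end, and the observation that a leaf meeting a crown must run into a crown tip. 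That local analysis is genuinely useful -- the paper relegates it to the bulleted discussion before the Lemma and does not explicitly justify the \emph{choice} of neighbourhood (i.e.\ why transient leaves that merely dip into a horoball or collar are excluded), whereas your radius-$>\tfrac12$ self-intersection argument and the uniform collar bound make this precise. Conversely, your reduction of ``finitely many non-recurrent leaf-ends'' to the finite decomposition is only gestured at (one needs that infinitely many parallel leaf-ends at a cusp would force infinitely many spikes of complementary regions, contradicting the finite triangulation); the paper's Gauss--Bonnet count handles exactly this step explicitly, so the two write-ups are complementary in where they place the detail.
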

\begin{proof}
Consider a triangulation of the universal cover of $X$ as before. Then, the triangulation descends to the surface itself, due to $\pi_1(X)$-equivariance. From the Gauss-Bonnet formula, the number of triangles on the surface is exactly $-2\chi(X)$. Now, each triangle can contribute at most $3$ geodesics going into the cusps or crown cusps of $X$. Since each geodesic going into such an end is adjacent to two triangles, it follows that the total number of geodesics going into cusps and crown cusps is at most $-2\chi(X)\times 3 = -6\chi(X)$. This proves the first statement. For the second, note that any leaf of the lamination intersecting a crown must be a geodesic going to one or two of its boundary cusps (i.e.\ crown tips), so the statement follows.
\end{proof}

We briefly recall the notion of a  measured lamination, which is a geodesic lamination $L$ as above, together with a transverse measure on it (for details see, for example, \cite[\S 1.8]{Pen}):
\begin{defn}
Given a geodesic lamination $L$, let $\Lambda(L)$ denote the collection of all compact $1$-manifolds embedded in $X$ which are transverse to $L$ and such that their boundary (if it exists) lies in $X\backslash L$. Then, a measure on $L$ refers to a function $\mu: \Lambda(L) \rightarrow \mathbb{R}_{\geq 0}$ such that it is transverse to $L$ (i.e. if $\alpha$ and $\beta$ are 1-manifolds that are homotopic via 1-manifolds with boundary in $X\backslash L$, then $\mu(\alpha)=\mu(\beta)$), $\sigma$-additive (i.e.\ if $\alpha=\cup_{i\in \mathbb{N}} \alpha_i$ with $\alpha_i \cap \alpha_j = \partial\alpha_i \cap \partial \alpha_j$, then $\mu(\alpha)=\sum\limits_i\mu(\alpha_i)$), and its support is $L^{\circ}$. 
This pair $(L,\mu)$ defines a measured lamination on $X$.
\end{defn}

Note that given a measured lamination, every isolated curve in $L$ obtains a weight. 
We can define a topology on the set of measured laminations as follows: First, given a lamination, we can lift the lamination to the universal cover to get a $\pi_1(X)$-invariant set of geodesics in $\mathbb{H}^2$. Thus, it gives us a subset of the space $M_{\infty} = (\partial \mathbb{H}^2 \times \partial \mathbb{H}^2 \backslash \Delta)/\sim $, where $\sim$ is the equivalence relation $(x,y)\sim(y,x)$. Then, a measure on the lamination gives us a Borel measure on the space $M_\infty$. We define the topology on the set of measured laminations to be the topology it inherits from the weak-$*$ topology of measures on $M_\infty$. Thus, we have:

\begin{defn}
The space of measured laminations on $X$, denoted $\mathcal{ML}(X)$, is the set of all pairs $(L,\mu)$ defined above, endowed with the topology it inherits from being a subset of the space of measures on $M_\infty$, endowed with the weak-$*$ topology.
\end{defn}


Now, we can parametrize the space $\mathcal{ML}(X)$ via the use of Dehn-Thurston coordinates as described in the unpublished notes of Dylan Thurston \cite{Thu}, to get the following (\textit{c.f.} Proposition 1.5. of \cite{Hat}):

\begin{prop}\label{dim_measured_laminations}
The space $\mathcal{ML}(X)$ is homeomorphic to $\mathbb{R}^{\chi-p}\times\mathbb{R}_{\geq0}^{p}$, where $p$ is the number of cusp ends of $X$.
\end{prop}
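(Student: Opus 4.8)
The plan is to parametrize $\mathcal{ML}(X)$ by Dehn--Thurston coordinates relative to a fixed pants decomposition of $X$ adapted to its ends, following \cite{Thu} and \cite{Hat}, and to read off the homeomorphism type by analysing each kind of end separately. First I would choose such a pants decomposition, compatible with an ideal triangulation as in the Lemma preceding Lemma \ref{lem:lam}; by the structural description of the leaves near each end together with the finiteness in Lemma \ref{lem:lam}, every pair $(L,\mu)$ is carried by a train track subordinate to this decomposition, hence is determined by finitely many real weights. For each interior pants curve $c_i$ I assign its transverse measure $m_i=i(\mu,c_i)\geq 0$ and twisting number $t_i\in\mathbb{R}$, and the Dehn--Thurston theorem identifies the image with the constrained set $\{(m_i,t_i):m_i\geq 0,\ m_i=0\Rightarrow t_i\geq 0\}$; since the single-curve model $\{m\geq 0,\ m=0\Rightarrow t\geq 0\}$ is homeomorphic to $\mathbb{R}^2$ (via an angle-doubling reparametrization in polar coordinates), each interior curve contributes a full $\mathbb{R}^2$ factor.

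Next I would analyse the three types of ends, which is where the signs of the coordinates are decided. At a geodesic end the boundary geodesic is a leaf carrying no measure, and every other leaf spirals onto it; crucially the spiralling direction is \emph{not} fixed by the signing of $X$ but is part of the lamination data, so the local invariant is a single coordinate in $\mathbb{R}$ whose sign records the direction and whose modulus records the amount of spiralling, the zero coordinate corresponding to no leaves entering the end. At a crown end, Lemma \ref{lem:lam} lets me take the whole crown as the neighbourhood; the finitely many leaves run between the crown tips and spiral onto the sides and the inner curve, and a count of the $n_i-2$ tips together with the twisting along the inner curve produces coordinates valued in $\mathbb{R}$, again because the spiralling and shearing directions are free. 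By contrast, at a cusp the structural description forces every leaf to exit straight into the cusp with no spiralling, so the only invariant is the total transverse weight $w\geq 0$ along a horocycle: there is no direction to sign and no curve to twist against, whence a single factor $\mathbb{R}_{\geq 0}$.

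Assembling the contributions and comparing with the edge count behind the shear-coordinate identification $\mathcal{T}^{\pm}(\mathbb{S},\mathbb{M})\cong\mathbb{R}^{\chi}$ of Theorem \ref{dim_teichmuller} (as expected from the duality between $\mathcal{ML}$ and enhanced Teichm\"{u}ller space), the total number of real coordinates is $\chi$, of which exactly $p$---one per cusp---are $\mathbb{R}_{\geq 0}$ and the remaining $\chi-p$ are $\mathbb{R}$. This yields the claimed homeomorphism type $\mathbb{R}^{\chi-p}\times\mathbb{R}_{\geq 0}^{p}$, once one checks that the coordinate map is a homeomorphism for the topology on $\mathcal{ML}(X)$ induced from the weak-$*$ topology on measures on $M_\infty$; here one verifies that weak-$*$ convergence of the measures is equivalent to convergence of the Dehn--Thurston coordinates in the constrained region, including across the strata where some $m_i$ vanishes and the combinatorial type of the carrying train track jumps.

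I expect the main obstacle to be the crown ends. Unlike the classical closed-surface picture underlying \cite{Thu, Hat}, the crowns have ideal vertices (tips) into which leaves may run exactly as into cusps, and the subtle point is to show that their contribution is a full Euclidean factor with \emph{no} spurious non-negativity constraints---in other words, that a crown tip behaves unlike an isolated cusp because the adjacent sides and the inner curve of the crown supply the twisting and shearing freedom that upgrades each tip coordinate from a half-line to a line. Pinning this down, and simultaneously matching the degeneration $w\to 0$ at an interior cusp with the collapse to the empty lamination near that cusp, is what isolates the exact factor $\mathbb{R}_{\geq 0}^{p}$ rather than a larger power of $\mathbb{R}_{\geq 0}$.
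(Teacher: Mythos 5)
Your proposal follows essentially the same route as the paper's proof: Dehn--Thurston coordinates for a pants decomposition, with each interior curve contributing $\mathbb{R}^2$, each geodesic boundary end contributing $\mathbb{R}$ (sign of spiralling plus total weight), each cusp contributing $\mathbb{R}_{\geq 0}$, and the crowns contributing full Euclidean factors. The one point you flag as the main obstacle --- the crown ends --- is exactly where the paper outsources the work, splitting $X$ into a cusped/bordered core and the crowns and quoting \cite[Propositions 3.7--3.8]{Gup-Mj} for the fact that each crown's lamination space is $\mathbb{R}^{n_i-1}$ with one gluing parameter matched to the core (net contribution $\mathbb{R}^{n_i-2}$ per crown); note also that leaves entering a crown run into its tips rather than spiralling onto its sides.
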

\begin{proof}
Recall here that $\chi$ is given by \eqref{eq:chi}. 
For simplicity, let us first assume that there are no crown ends on $X$, i.e.\ $k=0$ or $\mathbb{M} = \mathbb{P}$. In this case $\chi = 6g-6 + 3m$ where $m=\lvert \mathbb{M} \rvert$. 

We construct a pants decomposition of the topological surface underlying $X$; an Euler characteristic count this gives us a total of $\#P= 2g+m-2$ pairs of pants. Hence, there are $t=\frac{3\cdot\#P-m}{2}$ simple closed  curves in the interior of $X$ that gives the decomposition. To this set of curves, we also add in the geodesic boundaries and peripheral loops around the cusps of $X$ to obtain a total of $t+m$ curves. Let these curves be given by $(P_1,P_2,...,P_t,P_{t+1},...,P_{t+m})$. Also, let us choose a collection of dual curves $D_i$ for each $P_i$.
Then, we have that associated to each pants curve $P_i$, there is a pair of parameters for the measured lamination, namely the length or intersection parameter $i(\mu,P_i)$ and the twist parameter $\theta(\mu,P_i)$, the latter measured using $D_i$. It follows from the discussion in \cite{Thu} that the space of measured laminations is homeomorphic to this  space of intersection and twist parameters. This gives a factor of $\mathbb{R}^{2t}$.

For a pants curve in the interior of $X$, the length parameter can take values in $\mathbb{R}_{\geq 0}$ while the twist parameter takes values in $\mathbb{R}$. Considering the pairs of parameter values $(i, \theta) \in \mathbb{R}_{\geq 0} \times \mathbb{R}$, observe that in the case of zero length, the twist parameter does not matter, leading to the identification $(0,\theta)\sim(0,-\theta)$. Hence, the parameter space is in fact homeomorphic to $\mathbb{R}^2$ for an interior curve. For a pants curve that is a boundary geodesic, the twist parameter can only take $\pm \infty$ as values if the length parameter is not zero. If the length parameter is zero, the twist parameter is also necessarily zero. Hence, it follows that the space parametrizing pairs $(i,\theta)$ for a geodesic boundary is homeomorphic to $\mathbb{R}$. This gives a factor of $\mathbb{R}^b$, where $b$ is the number of geodesic boundary ends of $X$. For a pants curve that corresponds to a cusp end, hence there is no twist parameter. Thus the corresponding space of parameters is homeomorphic to $\mathbb{R}_{\geq 0}$. This gives a factor of $\mathbb{R}_{\geq0}^p$. Note that $p+b = m$; collecting the factors, we get a space homeomorphic to $\mathbb{R}^{2t}\times\mathbb{R}^b\times\mathbb{R}_{\geq 0}^p \equiv \mathbb{R}^{\chi-p}\times\mathbb{R}_{\geq 0}^p $, as desired. This handles the case when there are no crown ends. 

The case of (only) crown ends was established in \cite[Proposition 3.8]{Gup-Mj}, and we now follow the arguments there to extend the above proof to include crown ends. As in that proof, we can first remove the crown-ends to obtain a surface with $k$ additional boundary components. Following the argument above, the space of  measured laminations on the resulting surface $X^\prime$  with $(b+k)$ boundary components and $p$ cusps is homeomorphic to $\mathbb{R}^{2t}\times\mathbb{R}^{b+k}\times\mathbb{R}_{\geq 0}^p$. By \cite[Proposition 3.7]{Gup-Mj}, the space of measured laminations on each crown is homeomorphic to $\mathbb{R}^{n_i-1}$ where $(n_i-2)$ is the number of boundary cusps of the $i$-th crown. Moreover, these measured laminations can be glued to a measured lamination on $X^\prime$ as long as one parameter -- the transverse measure of each boundary component that is glued -- matches. Thus, the space of measured laminations on $X$ is homeomorphic to $\mathbb{R}^{2t}\times\mathbb{R}^{b+k}\times\mathbb{R}_{\geq 0}^p \times \prod\limits_{i=1}^k \mathbb{R}^{n_i-2} \equiv \mathbb{R}^{\chi-p}\times\mathbb{R}_{\geq 0}^p $, as desired. \end{proof}

Now, we consider an additional signing parameter for elements of  the space $\mathcal{ML}(X)$: At each end of $X$ that is a cusp, we assign a sign $\pm$ to the weight of measured lamination entering the end. By this marking, we get a branched cover \[
\mathcal{ML}^{\pm}(X) \rightarrow \mathcal{ML}(X)
\] 
branched over those measured laminations which have at least one cusp end with no weight of the lamination entering it. 
 \begin{lem}\label{dim_measured}
     The space $\mathcal{ML}^{\pm}(X)$ is homeomorphic to the cell $\mathbb{R}^{\chi}$.
 \end{lem}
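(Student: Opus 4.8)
The plan is to reduce the statement to the coordinate description of $\mathcal{ML}(X)$ furnished by Proposition \ref{dim_measured_laminations}, and then to observe that the signing operation at the cusps amounts, in those coordinates, to taking a branched double cover in each of the $p$ factors indexed by the cusp ends. Recall from that proposition that $\mathcal{ML}(X) \cong \mathbb{R}^{\chi - p} \times \mathbb{R}_{\geq 0}^p$, where the $i$-th $\mathbb{R}_{\geq 0}$-factor arises as the intersection parameter $i(\mu, P_i)$ of $\mu$ with the peripheral loop $P_i$ around the $i$-th cusp. Since the leaves of a lamination entering a cusp do so without spiralling (as recorded in the discussion of \S\ref{enhanced_space}), this intersection parameter is precisely the total weight of the lamination entering that cusp. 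Hence each of the $p$ nonnegative coordinates is exactly the quantity to which the signing assigns a sign.

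First I would treat the model one-dimensional situation. The map $\mathbb{R} \to \mathbb{R}_{\geq 0}$, $x \mapsto \lvert x \rvert$, is a branched double cover branched over $0$: over a point $w > 0$ the two preimages $\pm w$ record the two choices of sign of the weight, while over $w = 0$ the two sheets coincide, reflecting that a vanishing weight carries no sign. This is precisely the local structure of the branched cover $\mathcal{ML}^\pm(X) \to \mathcal{ML}(X)$ in the direction of a single cusp, whose branch locus in that direction is the set of laminations with no weight entering the given cusp.

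Next I would assemble the product. Taking the identity on the $\mathbb{R}^{\chi - p}$ factor and the map $x \mapsto \lvert x \rvert$ on each of the $p$ cusp factors yields a continuous surjection
\[
\mathbb{R}^{\chi - p} \times \mathbb{R}^p \longrightarrow \mathbb{R}^{\chi - p} \times \mathbb{R}_{\geq 0}^p \cong \mathcal{ML}(X),
\]
which is a $2^p$-fold branched cover with deck group $(\mathbb{Z}/2)^p$ acting factorwise, branched exactly over the locus where at least one cusp coordinate vanishes. By the defining description of $\mathcal{ML}^\pm(X)$ as the $2^p$-fold branched cover of $\mathcal{ML}(X)$ recording a sign at each cusp, branched over those laminations with at least one cusp with no incoming weight, this map realizes $\mathcal{ML}^\pm(X)$ as its total space. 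Since $\chi - p + p = \chi$, the total space is $\mathbb{R}^\chi$, as claimed.

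The point to verify carefully --- and what I expect to be the only real content beyond bookkeeping --- is that the branched-cover structure on $\mathcal{ML}^\pm(X)$ is genuinely the \emph{product} of the one-dimensional branched covers over the individual cusp factors, with matching branch loci and sheet-identifications. This rests on two facts: that the homeomorphism of Proposition \ref{dim_measured_laminations} carries the weight entering the $i$-th cusp onto the $i$-th nonnegative coordinate, and that the signs at distinct cusps are assigned independently. Both follow from the locality of the cusp parameters in the Dehn--Thurston coordinates; once these are in place, the identification of $\mathcal{ML}^\pm(X)$ with $\mathbb{R}^\chi$ is immediate.
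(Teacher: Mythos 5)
Your proposal is correct and follows essentially the same route as the paper, which simply observes that at each cusp end the non-negative weight parameter together with a sign can be regarded as a coordinate in $\mathbb{R}$, converting $\mathbb{R}^{\chi-p}\times\mathbb{R}_{\geq 0}^{p}$ into $\mathbb{R}^{\chi}$. Your additional verification that the branched cover is the factorwise product of one-dimensional double covers $x\mapsto\lvert x\rvert$ is a more careful spelling-out of the same idea, not a different argument.
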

\begin{proof}
    This follows from the proof of Proposition \ref{dim_measured_laminations}, by observing that at each cusp end, the non-negative parameter, together with a sign, can be thought of as taking values in  $\mathbb{R}$.  
\end{proof}
Now, note that the spaces $\mathcal{ML}^{\pm}(X)$ are canonically homeomorphic to each other as $X$ ranges over $\mathcal{T}(\mathbb{S},\mathbb{M})$. This motivates us to define a space parametrising signed measured laminations on the marked surface $(\mathbb{S},\mathbb{M})$. We note that the space parametrising measured laminations for the case $\mathbb{P}=\varnothing$ was introduced in \S 3.3 of \cite{Gup-Mj}.
\begin{defn}
    The space of signed measured laminations, denoted $\mathcal{ML}^\pm(\mathbb{S},\mathbb{M})$, is a space parametrising measured laminations on $(\mathbb{S},\mathbb{M})$ along with a choice of signings for the total incident weights at the set of punctures $\mathbb{P}\subset\mathbb{M}$. It has a topology induced from the transverse measures on finitely many closed curves on the surface, along with the measures of arcs crossing into the cusp ends for the interior punctures and the crowns.
\end{defn}
Then the previous lemma can be interpreted as proving: 
\begin{prop}
    The space $\mathcal{ML}^\pm(\mathbb{S},\mathbb{M})$ is homeomorphic to the cell $\mathbb{R}^{\chi}$.
\end{prop}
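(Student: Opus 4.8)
The plan is to deduce the statement directly from Lemma~\ref{dim_measured} together with the observation, noted just above its statement, that the spaces $\mathcal{ML}^\pm(X)$ are canonically homeomorphic to one another as $X$ ranges over $\mathcal{T}(\mathbb{S},\mathbb{M})$. Concretely, I would fix any hyperbolic structure $X\in\mathcal{T}(\mathbb{S},\mathbb{M})$ and exhibit a homeomorphism $\mathcal{ML}^\pm(\mathbb{S},\mathbb{M})\xrightarrow{\ \sim\ }\mathcal{ML}^\pm(X)$; composing with the homeomorphism $\mathcal{ML}^\pm(X)\cong\mathbb{R}^{\chi}$ supplied by Lemma~\ref{dim_measured} then yields the result.

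First I would construct the identification map. Given a measured lamination on the topological surface $(\mathbb{S},\mathbb{M})$, geodesic straightening with respect to $X$ produces a well-defined geodesic measured lamination on $X$, and this straightening is a bijection whose inverse (forgetting the geometry) does not depend on the chosen $X$; this is the standard fact underlying the topological invariance of the space of measured laminations. I would then check that this bijection carries the topology placed on $\mathcal{ML}^\pm(\mathbb{S},\mathbb{M})$ in its definition -- induced by the transverse measures of finitely many closed curves together with the measures of arcs crossing into the cusps and crowns -- to the weak-$*$ topology on $\mathcal{ML}(X)$ used to define $\mathcal{ML}^\pm(X)$. This amounts to matching the two finite coordinate systems: closed-curve intersection numbers and cusp/crown arc weights on one side, and Dehn--Thurston coordinates on the other.

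Next I would verify that the signing data transfers correctly. The signing in $\mathcal{ML}^\pm(\mathbb{S},\mathbb{M})$ is a choice of sign for the total incident weight at each puncture of $\mathbb{P}$, while the signing defining $\mathcal{ML}^\pm(X)$ is attached to the weight of the lamination entering each cusp end of $X$. Since the total weight incident at a puncture equals the transverse measure of an arc crossing into the corresponding cusp -- a quantity preserved under straightening -- the two branched covers have matching branch loci over the common base, and the sign choices correspond fiberwise. As in the proof of Lemma~\ref{dim_measured}, at each cusp the nonnegative weight together with its sign is repackaged as a single real parameter, so the branching is resolved and the total space is a cell $\mathbb{R}^{\chi}$.

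The main obstacle I anticipate is bookkeeping rather than conceptual: one must confirm that the topology on $\mathcal{ML}^\pm(\mathbb{S},\mathbb{M})$ as defined (via a finite collection of curves and cusp/crown arcs) genuinely agrees with the weak-$*$ topology transported from $\mathcal{ML}(X)$, and that the coordinate matching is compatible both with the identification $(0,\theta)\sim(0,-\theta)$ at interior pants curves and with the sign conventions at cusps. Once this compatibility is established, the composite map is a homeomorphism onto $\mathbb{R}^{\chi}$, as claimed.
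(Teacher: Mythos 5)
Your proposal is correct and follows the same route the paper takes: the paper explicitly states that the preceding Lemma \ref{dim_measured} (giving $\mathcal{ML}^\pm(X)\cong\mathbb{R}^{\chi}$ for a fixed $X$), combined with the observation that the spaces $\mathcal{ML}^\pm(X)$ are canonically homeomorphic as $X$ ranges over $\mathcal{T}(\mathbb{S},\mathbb{M})$, is what proves the Proposition. Your write-up simply fills in the details (straightening, topology matching, transfer of signings) that the paper leaves implicit.
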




\end{subsection}

\begin{subsection}{The Space of Signed Projective Structures}\label{signed_proj_struct}
For a surface with a marking $(\mathbb{S},\mathbb{M})$ as before, Allegretti-Bridgeland introduced the space $\mathcal{P}(\mathbb{S},\mathbb{M})$ in \cite{All-Bri}. This space parametrises meromorphic projective structures on $\Sigma_g$ with $k$ poles of orders given by the tuple $\mathfrak{n}$ and $m$ poles of order less than or equal to $2$, which are marked by the pair $(\mathbb{S},\mathbb{M})$. We also have the following additional structure on this space:

\begin{theorem}[Proposition 8.2 of \cite{All-Bri}]
The space $\mathcal{P}(\mathbb{S},\mathbb{M})$ has the natural structure of a complex manifold of complex dimension $\chi$, hence of real dimension $2\chi$.
\end{theorem}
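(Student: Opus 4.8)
The plan is to exhibit $\mathcal{P}(\mathbb{S},\mathbb{M})$ as the total space of a holomorphic affine bundle over the space of marked complex structures, and then read off its dimension from Riemann--Roch. Let $\mathcal{T}_{g,n}$ denote the Teichm\"uller space of Riemann surfaces of genus $g$ with $n=k+m$ marked points (the prescribed pole locations); this is the complex-analytic Teichm\"uller space, distinct from the enhanced hyperbolic space of \S2, and is a complex manifold of dimension $3g-3+k+m$. Recording the underlying complex structure together with the positions of the poles defines a forgetful map $\pi:\mathcal{P}(\mathbb{S},\mathbb{M})\to\mathcal{T}_{g,n}$, and the first task is to identify its fibers.

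Fix $X\in\mathcal{T}_{g,n}$ and let $D=\sum_{i=1}^{k} n_i\,p_i+\sum_{j=1}^{m} 2\,q_j$ be the divisor encoding the admissible pole orders. Since the Schwarzian derivatives of the developing maps of two projective structures on $X$, computed in a common local coordinate, differ by a genuine meromorphic quadratic differential whose poles are bounded by $D$, the fiber $\pi^{-1}(X)$ is an affine space (torsor) over the vector space $V_X:=H^0\!\big(X,K_X^{2}(D)\big)$. By Serre duality $H^1\!\big(X,K_X^2(D)\big)\cong H^0\!\big(X,K_X^{-1}(-D)\big)^\ast$, and since $\deg\big(K_X^{-1}(-D)\big)=-(2g-2)-\sum n_i-2m<0$ under our standing hypothesis $\chi>0$, this $H^1$ vanishes. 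Riemann--Roch then gives $\dim_{\mathbb{C}} V_X=\deg\big(K_X^2(D)\big)-g+1=3g-3+\sum_{i=1}^k n_i+2m$, a number independent of $X$; hence the $V_X$ assemble into a holomorphic vector bundle $\mathcal{V}\to\mathcal{T}_{g,n}$ of this constant rank.

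To promote the set-theoretic fibration $\pi$ to a holomorphic affine bundle, I would construct holomorphically varying families of reference projective structures, i.e.\ local holomorphic sections of $\pi$; subtracting such a reference identifies the fibers with $V_X$ fiberwise and linearly, producing holomorphic charts and hence exhibiting $\mathcal{P}(\mathbb{S},\mathbb{M})$ as (locally, and with a global section globally) the total space of $\mathcal{V}$. In the closed case such a section is supplied by the Schwarzian of the simultaneous (Bers) uniformization, which depends holomorphically on the complex structure; equivalently, one may invoke the fact that these structures are $\slc$-opers, whose moduli form an affine bundle over the base. In the present setting one adapts this by gluing explicit local projective models at the poles --- Fuchsian models at the order-$\le 2$ points of $\mathbb{P}$ and Stokes-sector models at the higher-order poles --- into a globally defined, holomorphically varying reference structure. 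Granting this, $\mathcal{P}(\mathbb{S},\mathbb{M})$ is a complex manifold of dimension $\dim_{\mathbb{C}}\mathcal{T}_{g,n}+\operatorname{rank}\mathcal{V}=(3g-3+k+m)+(3g-3+\sum n_i+2m)=\chi$, as claimed; the requirement that each pole have order \emph{exactly} $n_i$ cuts out an open subset and so affects neither the manifold structure nor the dimension.

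The main obstacle is precisely the holomorphic dependence in the last step: one must verify that a reference projective structure --- equivalently, a distinguished solution of the Schwarzian equation \eqref{schw} --- can be made to vary holomorphically with $X$ across all strata, in particular near the irregular singularities where the local normal form and its Stokes data must be controlled uniformly. Establishing that the local models glue to holomorphic families (or, alternatively, producing holomorphic charts on $\mathcal{P}(\mathbb{S},\mathbb{M})$ directly from families of developing maps) is where the analytic work concentrates; once the affine-bundle structure is in hand, the dimension count is immediate from Riemann--Roch.
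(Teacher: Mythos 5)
The paper gives no proof of this statement: it is quoted verbatim from Proposition 8.2 of \cite{All-Bri}, and your proposal correctly reconstructs the argument of that reference --- exhibit the space as (a covering of an open subset of) a holomorphic affine bundle over Teichm\"uller space modeled on $H^0\bigl(X,K_X^2(D)\bigr)$, and compute the rank by Riemann--Roch. Your dimension count is right, and the vanishing of $H^1$ does follow from the standing hypotheses ($n_i\geq 3$, and $\lvert\mathbb{M}\rvert\geq 3$ when $g=0$, force $\deg\bigl(K_X^{-1}(-D)\bigr)<0$).

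The step you flag as the ``main obstacle'' --- gluing Fuchsian and Stokes-sector local models at the poles into a holomorphically varying reference --- is not actually needed, so there is no genuine gap here. A meromorphic projective structure is, by definition, a projective structure on the complement of the poles whose Schwarzian relative to a reference structure on the \emph{compact} surface $X$ is a meromorphic quadratic differential with poles bounded by $D$; one may therefore take the reference to be the Bers (simultaneous uniformization) structure on the closed surface $X$, restricted to the complement of the poles. This varies holomorphically over Teichm\"uller space of the compact marked surface and identifies each fiber of $\pi$ with the affine space over $H^0\bigl(X,K_X^2(D)\bigr)$ with no local analysis at the singularities. Two small points to record for completeness: the requirement that the pole order at $p_i$ be exactly $n_i$ is the nonvanishing of finitely many leading coefficients and so cuts out an open subset, as you say; and the marking by $(\mathbb{S},\mathbb{M})$ includes the labelling of the $n_i-2$ horizontal directions at each irregular pole by the marked points of the corresponding boundary component, which is discrete covering-space data and affects neither the local complex structure nor the dimension.
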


Similar to the definition of the space of signed measured laminations, we shall define a signed version of the space $\mathcal{P}(\mathbb{S},\mathbb{M})$, with the signing being given at each puncture $\mathbb{P}$ by the exponent of the projective structure at the puncture (defined below). The discussion here follows that in \S 8.2 of \cite{All-Bri}, though we shall provide an alternative description in terms of a fiber-product  (Lemma \ref{fiber_product}). 

\subsubsection{The Exponent at a Regular Singularity} Given a meromorphic projective structure, we can define the \textit{leading coefficient} at a regular singularity $p$ as follows:
\begin{equation*}
a_p := \displaystyle \lim_{z\rightarrow p}  q(z)\cdot z^2
\end{equation*}
where $q(z)dz^2$ is the Schwarzian derivative of the projective structure in a uniformizing neighbourhood of the puncture. Note that this is independent of the choice of the uniformizing chart, hence well-defined for the projective structure.  In other words, in any coordinate $z$ around the regular singularity, the quadratic differential $q$ has the form:
\begin{equation*}
q(z) = \left(\frac{a_p}{z^2} + \cdots \right) dz^2 
\end{equation*}

\vspace{.05in} 

We then define:

\begin{defn}[Exponent]\label{defn:exp}  The exponent at a regular singularity  $p\in \mathbb{P}$ of a meromorphic projective structure is the complex number 
\begin{equation}\label{expon}
r_p := \pm 2\pi i \sqrt{1-2a_p}
\end{equation}
defined up to sign, where $a_p$ is the leading coefficient at $p$. 
\end{defn}

\noindent
 \textit{Remark.} Our definition of exponent slightly differs from the one in \cite{All-Bri}, in that they have $(1+4a_p)$ under the square root in place of $(1-2a_p)$. This is only matter of convention, arising from the fact that their Schwarzian equation has a constant factor of $-1$ of the zeroth order term while ours (in \eqref{schw}) has constant $\frac{1}{2}$.

\subsubsection{The Signed Space of Projective Structures}
First, we associate a signing to our marked projective structures, as in \S 3.5 of \cite{All-Bri}:
\begin{defn}
    A signed marked projective structure is a marked projective structure on the pair $(\mathbb{S},\mathbb{M})$ along with a signing at each of the regular singularities, i.e.\ a choice of sign for the exponent at each regular singularity.
\end{defn}
Now, we define the space of signed marked projective structures as a cover of $\mathcal{P}(\mathbb{S},\mathbb{M})$, following Proposition 8.4 of \cite{All-Bri}:
\begin{theorem}
    There exists a complex manifold $\mathcal{P}^\pm(\mathbb{S},\mathbb{M})$ that parametrises signed, marked projective structures on $(\mathbb{S},\mathbb{M})$, along with a finite branched covering map \[
    \mathcal{P}^\pm(\mathbb{S},\mathbb{M}) \rightarrow \mathcal{P}(\mathbb{S},\mathbb{M})
    \]
    of degree $2^{|\mathbb{P}|}$, obtained by forgetting the signing.
\end{theorem}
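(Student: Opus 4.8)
The plan is to realise $\mathcal{P}^\pm(\sm)$ concretely as a simultaneous fiber product of double covers over $\mathcal{P}(\sm)$, one factor for each puncture in $\mathbb{P}$, which is exactly the alternative description announced in Lemma \ref{fiber_product}. The starting observation is that for each $p\in\mathbb{P}$ the leading coefficient $a_p$ of Definition \ref{defn:exp} varies holomorphically over the complex manifold $\mathcal{P}(\sm)$: in the local holomorphic coordinates on $\mathcal{P}(\sm)$ coming from the Schwarzian/quadratic-differential description (Proposition 8.2 of \cite{All-Bri}), the principal part $a_p=\lim_{z\to p}q(z)z^2$ at the order-$\leq 2$ pole $p$ is a holomorphic function. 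Consequently the squared exponent $r_p^2=-4\pi^2(1-2a_p)$ defines a holomorphic function $w_p:\mathcal{P}(\sm)\to\mathbb{C}$, and a signing at $p$ is precisely a choice of square root of $w_p$.

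First I would, for each $p$, form the double cover
\[
\mathcal{P}_p^\pm:=\bigl\{\,(P,r_p)\in\mathcal{P}(\sm)\times\mathbb{C}\;:\;r_p^2=w_p(P)\,\bigr\},
\]
with projection $(P,r_p)\mapsto P$. Away from the zero locus $Z_p:=\{w_p=0\}=\{a_p=\tfrac12\}$ this is an unramified double cover recording the sign of the exponent $r_p$, while over $Z_p$ the two sheets coalesce; this reproduces precisely the branching in the statement, over structures with a vanishing exponent at $p$. I would then define $\mathcal{P}^\pm(\sm)$ to be the fiber product over $\mathcal{P}(\sm)$ of these $\lvert\mathbb{P}\rvert$ double covers, i.e.
\[
\mathcal{P}^\pm(\sm)=\bigl\{\,\bigl(P,(r_p)_{p\in\mathbb{P}}\bigr)\;:\;r_p^2=w_p(P)\ \text{for all }p\in\mathbb{P}\,\bigr\},
\]
whose fiber over a generic $P$ has $2^{\lvert\mathbb{P}\rvert}$ points, one per sign vector, giving the asserted degree. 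The identification of such a point with a signed marked projective structure is then tautological from Definition \ref{defn:exp}, and the construction runs parallel to Proposition 8.4 of \cite{All-Bri}, the only change being the convention-dependent formula for $r_p$.

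The key step, and the one I expect to be the main obstacle, is verifying that the total space is a \emph{smooth} complex manifold across the branch locus. The hypersurface $\{r_p^2=w_p\}$ in $\mathbb{C}\times\mathcal{P}(\sm)$ is cut out by $F_p=r_p^2-w_p$, whose differential $(2r_p,\,-dw_p)$ fails to be surjective only where $r_p=0$ and $dw_p=0$ simultaneously, that is, where $w_p=0$ and $dw_p=0$. Thus smoothness of $\mathcal{P}_p^\pm$ reduces to showing that $w_p$ (equivalently $a_p$) vanishes to first order along $Z_p$, i.e.\ that $Z_p$ is a smooth reduced divisor. I would establish this by checking that the leading-coefficient map $P\mapsto a_p$ is a holomorphic submersion; concretely, that $a_p$ can be taken as one of the local holomorphic coordinates on $\mathcal{P}(\sm)$, so that $da_p\neq 0$ everywhere and in particular on $Z_p$. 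Finally, because the coordinate functions $a_p$ for distinct punctures are independent, the divisors $Z_p$ meet transversally, so the simultaneous fiber product remains smooth; the covering map is then finite and proper of degree $2^{\lvert\mathbb{P}\rvert}$, branched exactly over $\bigcup_{p}Z_p$, which completes the proof.
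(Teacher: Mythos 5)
Your proposal follows essentially the same route as the paper: the paper likewise defines the holomorphic leading-coefficient map $a:\mathcal{P}(\mathbb{S},\mathbb{M})\to\mathbb{C}^{\lvert\mathbb{P}\rvert}$, shows it is a submersion, and builds $\mathcal{P}^\pm(\mathbb{S},\mathbb{M})$ as the $2^{\lvert\mathbb{P}\rvert}$-fold cover branched over the zero loci of $1-2a_p$ (equivalently your fiber product of double covers, which the paper records separately in Lemma \ref{fiber_product}). The one step you leave as "to be checked" --- that $a_p$ is a submersion, so the branch divisors are smooth and transverse --- is supplied in the paper by a Riemann--Roch argument as in Lemma 6.1 of \cite{BS15}: one exhibits a quadratic differential with nonzero leading coefficient at a given puncture and zero at all others.
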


\begin{proof}
The proof is essentially the same as that in \cite{All-Bri}. First, we define the map
    \[
a : \mathcal{P}(\mathbb{S},\mathbb{M}) \rightarrow \mathbb{C}^{\lvert \mathbb{P}\rvert }
\]
sending each projective structure to the collection of leading coefficients at each of its regular singularities. We have that $a$ is a holomorphic map because of the way the complex structure on $\mathcal{P}(\mathbb{S},\mathbb{M})$ is constructed (see, for example, \cite[Proposition 7.3]{All-Bri}). Moreover, $a$ is a submersion, by a similar argument as in the proof of Lemma 6.1 in \cite{BS15} -- it suffices to construct a quadratic differential whose leading coefficient is nonzero at a given regular singularity and zero at all other regular singularities, which is clear from an application of Riemann-Roch.

Due to these properties of $a$, we can construct a $2^{\lvert\mathbb{P}\rvert}$-branched cover $\mathcal{P}^{\pm}(\mathbb{S},\mathbb{M})$ of $\mathcal{P}(\mathbb{S},\mathbb{M})$, branched over the zero loci of $\{1-2a_p\}_{p\in\mathbb{P}}$, and such that the points in a fiber of the branching represent a choice of a sign (i.e. an element of $\{\pm 1\}$)  for the non-zero exponents at the regular singularities.
\end{proof}

\noindent\textit{Remark.} We note that \cite{All-Bri} do the same construction as above, the only difference being that they define the covering space over the subset of projective structures without any apparent singularities. 

\subsubsection{Describing the Signed Space by Fiber Products}
We provide another description of the space $\mathcal{P}^{\pm}(\mathbb{S},\mathbb{M})$ via fiber products, that will be helpful in defining the grafting map later. Recall the definition of a fiber product (for example, see Chapter 1, \S 11 of \cite{Lang}):

\begin{defn}\label{def:fiberprod}
    Let  $\mathcal{C}$ be a category. Suppose we are given two morphisms $f : A\rightarrow C$ and $g: B\rightarrow C$ among objects $A,B,C$. Then, the fiber product of $A$ and $B$ with respect to the given morphisms is an object $A\times_CB$ in $\mathcal{C}$, along with morphisms $g' : A\times_CB \rightarrow A$ and $f' : A\times_CB \rightarrow B$, such that $f\circ g' = g \circ f'$, and the following universal property holds: given any object $D$ with morphisms $f'':D\rightarrow B$ and $g'' : D\rightarrow A$ such that $f\circ g''=g\circ f''$, there is a unique morphism $i : D \rightarrow A\times_CB$ such that $f''=f'\circ i$ and $g''=g' \circ i$, i.e$.$ the maps $f'',g''$ factor through $i$.
\end{defn}

Now, we have the following:
\begin{lem}\label{fiber_product}
    Let $m = \lvert \mathbb{P}\rvert$ and consider the map \[
    r^2: \mathcal{P}(\mathbb{S},\mathbb{M}) \rightarrow \mathbb{C}^{m}
    \]
    that assigns to a meromorphic projective structure the set of squares of exponents (see \eqref{expon}) at its regular singularities $\mathbb{P}$, and \[
    sq : \mathbb{C}^{m} \rightarrow \mathbb{C}^{m}
    \]
    that squares each coordinate of $\mathbb{C}^{m}$, i.e$.$ $sq(z_1,z_2,...,z_m)=(z_1^2,z_2^2,...,z_m^2)$. Then, $\mathcal{P}^{\pm}(\mathbb{S},\mathbb{M})$ is isomorphic to the fibre product $\mathcal{P}(\mathbb{S},\mathbb{M})\times_{\mathbb{C}^{m}}\mathbb{C}^{m}$, in the category of complex manifolds as well as in the category of topological spaces. 
    \[\begin{tikzcd}
	{\mathcal{P}^{\pm}(\mathbb{S},\mathbb{M})} & {\mathbb{C}^{\lvert \mathbb{P}\rvert}} \\
	{\mathcal{P}(\mathbb{S},\mathbb{M})} & {\mathbb{C}^{\lvert \mathbb{P}\rvert}}
	\arrow["r"', from=1-1, to=1-2]
	\arrow["\pi", shift right=2, from=1-1, to=2-1]
	\arrow["{r^2}", from=2-1, to=2-2]
	\arrow["sq"', from=1-2, to=2-2]
\end{tikzcd}\]
\captionsetup{justification=centering}
\captionof{figure}{Fiber product defining the space $\mathcal{P}^{\pm}(\mathbb{S},\mathbb{M})$. Here, $\pi$ is the forgetful projection map.}
\end{lem}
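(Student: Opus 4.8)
The plan is to verify the universal property of the fiber product directly, and then observe that the construction used to define $\mathcal{P}^{\pm}(\sm)$ in the previous theorem already matches this. First I would set up the two natural morphisms out of $\mathcal{P}^{\pm}(\sm)$: the forgetful projection $\pi: \mathcal{P}^{\pm}(\sm) \to \mathcal{P}(\sm)$, and the map $r: \mathcal{P}^{\pm}(\sm) \to \mathbb{C}^{m}$ that sends a \emph{signed} projective structure to the tuple of \emph{signed} exponents $(r_p)_{p \in \mathbb{P}}$ (this is well-defined precisely because the signing is the extra datum that resolves the sign ambiguity in \eqref{expon}). The key commutativity $sq \circ r = r^2 \circ \pi$ holds by construction: squaring the signed exponent at each puncture recovers $r_p^2 = -4\pi^2(1-2a_p)$, which depends only on the underlying unsigned structure, so it factors through $\pi$ and the map $r^2$.

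Next I would check the universal property. Given any complex manifold (or topological space) $D$ together with morphisms $g'': D \to \mathcal{P}(\sm)$ and $f'': D \to \mathbb{C}^{m}$ satisfying $r^2 \circ g'' = sq \circ f''$, the compatibility condition says that for each point of $D$ and each puncture $p$, the value $(f'')_p$ is a square root of the unsigned quantity $r_p^2$ attached to $g''$. A square root is exactly a choice of sign for the exponent, so the pair $(g'', f'')$ determines a lift to a signed projective structure; this defines the unique candidate map $i: D \to \mathcal{P}^{\pm}(\sm)$, and one checks $\pi \circ i = g''$ and $r \circ i = f''$. Uniqueness is immediate since $\pi$ is a covering and the sign data are prescribed by $f''$. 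I would then argue continuity/holomorphicity of $i$ from the fact that $\mathcal{P}^{\pm}(\sm)$ was built as a branched cover whose local sheets correspond to sign choices, so away from the branch locus $i$ is locally the composite of $g''$ with a local section, and along the branch locus (the zero loci of $\{1-2a_p\}$) both sheets agree, forcing continuity.

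The main obstacle I expect is the behavior at the branch locus, where $1 - 2a_p = 0$ and the square-root map $sq$ ramifies. Away from this locus the fiber product is an honest (unbranched) cover and everything is a routine diagram-chase; the delicate point is confirming that the complex-analytic structure on $\mathcal{P}^{\pm}(\sm)$ as a \emph{branched} cover genuinely coincides with the fiber-product structure $\mathcal{P}(\sm) \times_{\mathbb{C}^{m}} \mathbb{C}^{m}$ at the ramification points. Here I would invoke that $a$ (hence $r^2$) is a submersion, as established in the preceding proof: this guarantees that $r^2$ is transverse to the critical values of $sq$ in each coordinate, so the fiber product is smooth and its analytic structure is the standard pulled-back one. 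Since the previously constructed $\mathcal{P}^{\pm}(\sm)$ was obtained by exactly this ramified sign-choice over the same zero loci, the two complex manifolds are canonically isomorphic, and the same identification works in the category of topological spaces by forgetting the analytic structure.
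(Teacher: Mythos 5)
Your proposal is correct and matches the paper's intent: the paper gives no real proof here, remarking only that the lemma ``is just a re-statement of the way the branched cover is defined,'' and your verification of the universal property (with $r$ well-defined on the signed space, $sq \circ r = r^2 \circ \pi$, the induced map $i=(g'',f'')$ determined by the sign choices, and the submersion property of $a$ guaranteeing smoothness of the fiber product at the ramification locus) is exactly the content being left implicit. The only slight imprecision is calling $\pi$ a covering when it is a branched covering, but uniqueness and continuity of $i$ still follow, most cleanly by noting that the fiber product sits inside $\mathcal{P}(\mathbb{S},\mathbb{M})\times\mathbb{C}^{m}$ as a closed submanifold, so $i=(g'',f'')$ is automatically continuous and holomorphic.
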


Although this is just a re-statement of the way the branched cover is defined, and is in fact implicit in the construction described in \cite[\S8.2]{All-Bri}, it is convenient as we shall use the universal property of fiber products in \S3.3 while defining the signed grafting map. 

\vspace{0.1in}

\noindent\textit{Remark.} Note that the exponent at regular singularities was defined only up to sign on the space of unsigned projective structures. The construction of the signed space allows us to uniquely define the exponent at each regular singularity of a signed projective structure. Thus, we get a well-defined holomorphic map $r: \mathcal{P}^{\pm}(\mathbb{S},\mathbb{M}) \rightarrow \mathbb{C}^m$ which sends a signed projective structure to the exponents at each of its regular singularities. This is the map on the upper side of the commuting square in Figure 1.

\end{subsection}
\section{The Grafting Theorem}

Our first result concerns a grafting parametrization for the space of signed projective structures $\mathcal{P}^{\pm}(\mathbb{S},\mathbb{M})$. If one ignores signings, the grafting operation was briefly described in the Introduction; for details, see the references mentioned there, and for the present context of marked and bordered hyperbolic surfaces, we refer the reader to \cite{Gup-Mj} and \cite{Gup}. 

\vspace{.1in}

We begin by stating the following general result concerning simply-connected projective surfaces, essentially due to Kulkarni-Pinkall, see \cite[Theorem 10.6]{Kul-Pin}, and also Theorem 2.1. of \cite{Gup-Mj}:

\begin{theorem}\label{thm: kulpink}
Let $\tilde{X}$ be a simply-connected projective surface that is not projectively
isomorphic to $\mathbb{C}$, or the universal cover of $\mathbb{CP}^1 \backslash \{0,\infty\}$. Then there exists a unique
measured lamination $L$ on the hyperbolic plane $\mathbb{H}^2$ such that $\tilde{X}$ is obtained by grafting $\mathbb{H}^2$ along $L$. The map associating $L$ to $\tilde{X}$ is equivariant, i.e. if $\tilde{X}$ is the universal cover of a
projective surface $X$, and the developing map $\tilde{X} \rightarrow \mathbb{CP}^1$ is $\rho_\mathbb{C}$-equivariant for a representation $\rho_\mathbb{C} : \pi_1(X) \rightarrow \mathbb{PSL}_2(\mathbb{C})$, then $L$ is invariant under the image of a naturally associated
representation $\rho_\mathbb{R} : \pi_1(X) \rightarrow \mathbb{PSL}_2(\mathbb{R})$. Moreover, the image $\Gamma$ of $\rho_\mathbb{R}$ is discrete, and the
quotient $\mathbb{H}^2/\Gamma$ is homeomorphic to $X$.
Finally, the mapping $\tilde{X} \mapsto L$ is continuous.
\end{theorem}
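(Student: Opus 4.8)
The plan is to follow the canonical maximal-ball construction of Kulkarni-Pinkall, which produces the inverse of the grafting operation directly from the projective structure on $\tilde X$. First I would recall that a round disk $B \subset \cp$ carries a canonical Poincar\'e metric and, viewing $\cp = \partial \mathbb{H}^3$, bounds a unique totally geodesic hyperbolic plane $H_B \subset \mathbb{H}^3$. Given the developing map $f: \tilde X \to \cp$, for each point $x \in \tilde X$ I would consider the family $\mathcal{B}(x)$ of maximal round disks through $x$, namely round disks $B$ admitting a local section $s: B \to \tilde X$ of $f$ with $x \in s(B)$, maximal under inclusion among such developed disks. The exclusion of $\mathbb{C}$ and of the universal cover of $\cp \setminus \{0,\infty\}$ is precisely what guarantees that $\mathcal{B}(x)$ is non-empty and that no disk can be enlarged indefinitely; this is the first point to verify.

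Next I would build the collapsing map. Taking the Poincar\'e metric of a maximal disk through $x$ (equivalently, the supremum of Poincar\'e densities over round disks through $x$) defines the Kulkarni-Pinkall metric on $\tilde X$, which I would show is a complete metric of constant curvature $-1$; equivalently, assigning to $x$ the nearest point of the dome formed by the planes $\{H_B : B \in \mathcal{B}(x)\}$ gives a map $\kappa: \tilde X \to \mathbb{H}^2$ realizing this hyperbolic structure. The locus where $\mathcal{B}(x)$ is a singleton is the part of $\tilde X$ that is projectively a piece of $\mathbb{H}^2$, while the locus where $|\mathcal{B}(x)| \geq 2$ collapses under $\kappa$ onto a geodesic lamination $L$ on $\mathbb{H}^2$: two support planes meet along a geodesic, and the dihedral (bending) angle between them supplies the transverse measure. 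Checking that this bending locus is genuinely a geodesic lamination carrying a well-defined transverse measure, and that grafting $\mathbb{H}^2$ along $L$ recovers $\tilde X$, is the technical core and the step I expect to be the main obstacle.

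Uniqueness I would derive from canonicity: every ingredient (maximal disks, the Poincar\'e metric, the support planes, the bending angle) is intrinsic to the projective structure, so any grafting presentation of $\tilde X$ must coincide with $(\mathbb{H}^2, L)$. For equivariance, a projective automorphism of $\tilde X$ covering $\rho_{\mathbb{C}}(\gamma)$ permutes the maximal balls and hence acts on the dome by the induced isometry of $\mathbb{H}^3$; this isometry preserves the copy of $\mathbb{H}^2$ cut out by $\kappa$, defining $\rho_{\mathbb{R}}: \pi_1(X) \to \pslr$ under which $L$ is invariant. Discreteness of $\Gamma = \rho_{\mathbb{R}}(\pi_1 X)$ and the homeomorphism $\mathbb{H}^2/\Gamma \cong X$ then follow, since $\kappa$ descends to a homotopy equivalence from the hyperbolic surface $\mathbb{H}^2/\Gamma$ onto $X = \tilde X / \pi_1(X)$. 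Finally, for continuity of $\tilde X \mapsto L$ I would argue that the maximal-ball data depend continuously on the developing map (in the compact-open topology on charts), so that the support planes and bending angles vary continuously, yielding continuity of $L$ in the weak-$*$ topology on measured laminations introduced in \S\ref{enhanced_space}.
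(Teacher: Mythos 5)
Your outline follows essentially the same route the paper takes: it cites this result to Kulkarni--Pinkall and Gupta--Mj and sketches exactly the construction you describe — maximal developed disks, their support planes/convex hulls in $\mathbb{H}^3$ forming an equivariant pleated plane, straightening to a totally geodesic $\mathbb{H}^2$ on which the pleating lines and bending angles define the measured lamination $L$, with uniqueness, equivariance and continuity all flowing from the canonicity of the maximal-disk data. The technical core you flag (that the bending locus is a genuine measured lamination and that grafting recovers $\tilde X$) is precisely what the paper defers to the cited references rather than reproving.
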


We refer the reader to the sketch of the proof provided in \cite{Gup-Mj}. However, it will help to keep in mind the broad strategy of the proof: Each point $x$  in the universal cover $\widetilde{X}$ is contained in  ``maximal disk"  $U_x$ whose image under the developing map of the projective structure  is a round disk $V_x$  in $\cp$. The  convex hull $C(V_x)$ in $\mathbb{H}^3$ is bounded by a totally-geodesic hyperbolic plane. The envelope of these convex hulls, as $x$ varies in $\widetilde{X}$, then defines a $\rho_\mathbb{C}$-equivariant ``pleated plane" in $\mathbb{H}^3$, bent along an equivariant collection of geodesic pleating lines.  In fact, each convex hull $C(\overline{U_x} \cap \partial _\infty{\widetilde{X}})$ maps to a totally-geodesic face or ``plaque" of the pleated plane, or a pleating line. ``Straightening" the pleated plane then yields a totally-geodesic copy of $\mathbb{H}^2$, on which the pleating lines define the measured lamination $L$.  This equivariant pleated plane in $\mathbb{H}^3$ determined by the projective structure is a key intermediate object, interesting in its own right, that we shall refer to later. 

\vspace{.05in} 

The above result will be crucial in the proof of Theorem \ref{thm1}; in particular, we shall apply it to the lift of a given projective structure on $\mathbb{S}$ to its universal cover.

 \begin{subsection}{Grafting a marked surface along a measured lamination} 
Given an element $X\in \mathcal{T}^\pm(\mathbb{S},\mathbb{M})$ and a measured geodesic lamination $\lambda$ on $X$, we can perform the operation of grafting the surface along this measured lamination $\lambda$, to obtain a projective structure on $\mathbb{S}$.  In this section we first show that this projective structure is in fact in $\mathcal{P}(\sm)$, i.e. the Schwarzian derivative of the developing map descends to a quadratic differential with poles of prescribed orders at the punctures of the underlying Riemann surface.   (For brevity we shall often abbreviate this  by saying that the projective structure has poles of prescribed orders.) 
It suffices to verify this for each end of $X$; recall that there are only finitely many leaves of the lamination $\lambda$ going into a cusp, geodesic end or crown end. 

\vspace{.05in} 

For a boundary component of $\mathbb{S}$ (which defines a  crown end of $X$), we have the following from \cite{Gup-Mj}:

 \begin{lem}\label{irreg_graft}[Proposition 4.2. of \cite{Gup-Mj}]
The operation of grafting along a measured geodesic lamination exiting a crown end produces a meromorphic projective structure with a pole of order $n_i$ on the underlying punctured Riemann surface.
 \end{lem}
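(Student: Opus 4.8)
The plan is to reduce the statement to a local computation near the crown end, by exhibiting a uniformizing coordinate in which the developing map of the grafted structure takes the standard asymptotic form attached to a pole of order $n_i$, and then reading off the pole order directly from its Schwarzian derivative. The criterion I would use is the following local characterization: a projective structure has a pole of order $n$ at a puncture if and only if, in some uniformizing coordinate $z$ centered there, its developing map has leading asymptotics $f(z)\sim\exp\!\big(c\,z^{-(n-2)/2}\big)$ with $c\neq0$. Indeed, for such an $f$ the chain rule for the Schwarzian gives $S(f)=-\tfrac12(g')^2+S(g)$ with $g=c\,z^{-(n-2)/2}$, and since $(g')^2=\tfrac{c^2(n-2)^2}{4}z^{-n}$ dominates the lower-order term $S(g)=\tfrac{1-a^2}{2}z^{-2}$ (with $a=-(n-2)/2$), one gets $S(f)=(\text{const})\,z^{-n}(1+o(1))\,dz^2$, a pole of order exactly $n$. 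So it suffices to produce this asymptotic form for the grafted crown with $n=n_i$.

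First I would fix the local picture. By Lemma~\ref{lem:lam} the whole crown may be taken as the neighborhood of the end, and only finitely many leaves of the lamination enter it, each a bi-infinite geodesic running into one or two of the $n_i-2$ crown tips. Lifting to the universal cover, the crown unwraps into an infinite chain of lifts of its $n_i-2$ geodesic sides, developed by the hyperbolic developing map into $\mathbb{H}^2\subset\cp$ as a zigzag of complete geodesics whose ideal endpoints are the lifts of the tips. Performing the grafting amounts to equivariantly inserting the lunes prescribed by these (finitely many, per period) leaves; since every such leaf runs into a tip, the lunes are inserted between consecutive developed sides, and the developed image near the end becomes an infinite chain alternating hyperbolic pieces and grafted lunes that spirals toward the essential singularity of the developing map. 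The second step is to match this with the model: the $n_i-2$ tips of the crown are exactly in bijection with the $n_i-2$ Stokes sectors of $\exp\!\big(c\,z^{-(n_i-2)/2}\big)$ (the $n_i-2$ directions along which $\mathrm{Re}\big(c\,z^{-(n_i-2)/2}\big)$ changes sign), and I would first verify the asymptotic form for the \emph{ungrafted} hyperbolic crown and then upgrade it to the grafted case. Granting the matching, the Schwarzian computation of the preceding paragraph identifies the grafted structure locally as an element of $\mathcal{P}(\sm)$ with a pole of order exactly $n_i$, noting $c\neq0$ since $n_i-2\geq1$ guarantees genuine exponential (essential) behavior rather than a cusp.

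The main obstacle is the coordinate-matching in the middle step, because grafting changes both the developed image and the underlying conformal structure near the end, so one must show that inserting the lunes perturbs the developing map only through terms that do not affect the leading exponent $z^{-(n_i-2)/2}$ and hence leave the order of the pole unchanged. Concretely, I would control the effect of each lune insertion as a perturbation supported along a single developed leaf running into a tip, track how the uniformizing coordinate $z$ degenerates as one approaches that tip, and compare the grafted developing map with the ungrafted one in the overlapping sectors; the point to establish is that the lunes contribute only to the subleading data (the transverse measures enter $c$ and the lower-order asymptotics, not the exponent itself). This careful bookkeeping of the conformal coordinate against the exponential developing map is where the analytic work lies, and it is the step that genuinely uses the hypothesis that the lamination exits the crown through its tips.
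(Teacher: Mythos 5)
This lemma is not proved in the paper at all: it is imported verbatim as Proposition 4.2 of \cite{Gup-Mj}, where the argument is carried out by explicitly constructing the developing map of the grafted crown (an equivariant chain of developed ideal polygons with inserted lunes), identifying the grafted end conformally with a punctured disk, and computing the Schwarzian in that coordinate. Your outline has the right heuristic picture --- the $n_i-2$ crown tips matching the $n_i-2$ horizontal directions of a pole of order $n_i$, and the exponential model $\exp\bigl(c\,z^{-(n_i-2)/2}\bigr)$ --- but it has a genuine gap exactly at the step you yourself flag as ``where the analytic work lies,'' and that step is the entire content of the lemma.

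Two specific problems. First, your ``local characterization'' is not a usable criterion as stated: the Schwarzian involves $f'$, $f''$ and $f'''$, so a pointwise asymptotic $f(z)\sim\exp\bigl(c\,z^{-(n-2)/2}\bigr)$ does not imply $S(f)\sim S\bigl(\exp(c\,z^{-(n-2)/2})\bigr)$; a multiplicative error $1+o(1)$ whose derivatives oscillate can change the leading order of the Schwarzian arbitrarily. Your computation of $S(e^g)=-\tfrac12 (g')^2+S(g)$ is correct for $f=e^g$ exactly, but the grafted developing map is a piecewise M\"obius assembly of hyperbolic plaques and lunes, not an exact exponential, so one needs differentiable asymptotics --- or, as in \cite{Gup-Mj}, an exact description in a well-chosen coordinate. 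Second, the uniformizing coordinate $z$ in which the Schwarzian must be computed is not given in advance: one has to show that the grafted crown is conformally a punctured disk and pin that coordinate down against the developing map (noting also that $z^{-(n_i-2)/2}$ is multivalued for $n_i$ odd, so the matching must be done sector by sector). This is precisely the bookkeeping you defer to the last paragraph. Without these two steps the argument does not close; carrying them out essentially reproduces the proof of \cite[Proposition 4.2]{Gup-Mj}, which is why the present paper simply cites it.
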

 
 For the cusps and geodesic boundary components of $X$, we have the following Lemma. For the computations in the proof, it will help to recall the definition of the Schwarzian derivative:
 \begin{equation}\label{schw-deriv}
S(f) = \left( \frac{f^{\prime\prime}}{f^\prime}\right)^{\prime}  - \frac{1}{2} \left( \frac{f^{\prime\prime}}{f^\prime}\right)^{2}. 
\end{equation}
 Although this is essentially Proposition $3.5.$ of \cite{Gup}, we provide a complete proof here that clarifies some of the arguments there; we shall refer to some of the computations throughout this paper.

  \begin{lem}\label{regular_grafting}
The operation of grafting at cusps and geodesic ends produces a projective structure such that the Schwarzian derivative of the developing map on the underlying Riemann surface has a pole of order at most $2$. 
 \end{lem}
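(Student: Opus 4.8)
The plan is to verify the claim one end at a time, reducing to an explicit local model for the developing map in the universal cover. By Lemma \ref{lem:lam} only finitely many leaves of the lamination enter a given cusp or geodesic end, so the local picture is essentially finite. I would lift a neighbourhood of the end to $\mathbb{H}^2 \subset \cp$, where the developing map of the underlying hyperbolic structure is the inclusion and the peripheral holonomy $\gamma$ is parabolic (for a cusp) or hyperbolic (for a geodesic end). In both cases the finitely many leaves entering the end share an ideal endpoint with the fixed-point set of $\gamma$: at a cusp each leaf lifts to a vertical line ending at the parabolic fixed point, while at a geodesic end each leaf spirals onto the core geodesic, so its lift shares one endpoint with the axis of $\gamma$. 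These leaves, together with their $\langle\gamma\rangle$-translates, are the geodesics along which lunes are inserted. The goal is to exhibit a conformal coordinate $w$ on the grafted surface, with $w=0$ the puncture, and to show that the Schwarzian $q = S(f)$ of the grafted developing map, computed via \eqref{schw-deriv}, has a pole of order at most $2$ at $w=0$ (an order exactly $2$ pole of course suffices).

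First I would write the grafted developing map $f$ explicitly near the end. Since only finitely many leaves occur in each fundamental domain of $\langle\gamma\rangle$, the grafting subdivides the domain into finitely many pieces, on each of which $f$ agrees with a fixed M\"obius image of the inclusion; crossing a leaf of weight $t_j$ composes this M\"obius map with the elliptic ``rotation'' of angle $t_j$ fixing the two ideal endpoints of that leaf. Tracking these compositions once around the puncture computes the grafted holonomy $M = \mathrm{hol}(\gamma)$: because every factor fixes the common ideal endpoint of the leaves, $M$ is conjugate to an affine map $z \mapsto az + b$ whose multiplier $a$ is the original multiplier of $\gamma$ scaled by $e^{\pm i\Theta}$, where $\Theta = \sum_j t_j$ is the total transverse measure entering the end. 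Thus $M$ stays parabolic or becomes elliptic at a cusp, and becomes loxodromic (complex length $\ell + i\Theta$) at a geodesic end; in each case $M$ is the monodromy of a regular singularity.

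Next I would identify the conformal structure of the grafted surface near the end and read off the Schwarzian. The quantity $q = S(f)$ is invariant under post-composition by the M\"obius holonomy, so it descends to a single-valued holomorphic quadratic differential on the punctured-disk neighbourhood of $w=0$. Using the explicit piecewise description together with the M\"obius-invariance of the Schwarzian, I would show that, up to a harmless post-composition on each piece, $f$ is the standard regular-singular model in the coordinate $w$: the function $\tfrac{1}{2\pi i}\log w$ in the parabolic case, for which $S(\log w) = \tfrac{1}{2w^2}$, and $w^{\rho}$ in the diagonalizable case with $e^{2\pi i\rho}$ the multiplier of $M$, for which $S(w^{\rho}) = \tfrac{1-\rho^2}{2w^2}$. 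In either case $q$ has a pole of order exactly $2$, which is the assertion; as a byproduct this also pins down the leading coefficient $a_p$ at the puncture, which is needed later for the exponent and the signing.

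The step I expect to be the main obstacle is the conformal-coordinate identification in the previous paragraph, namely showing that grafting does not introduce worse-than-regular-singular behaviour. Grafting genuinely alters the complex structure near the end, so the puncture coordinate $w$ is \emph{not} simply $e^{2\pi i z}$, and one must check that the infinitely many lunes inserted along the $\langle\gamma\rangle$-translates of the finitely many leaves assemble, after passing to $w$, into a punctured disk on which $f$ matches the regular-singular model, rather than producing an essential singularity or an irregular singularity (a pole of order $\geq 3$). This is most delicate at a geodesic end, where the lifted leaves spiral and accumulate onto the core geodesic, so the inserted lunes accumulate as well; there I would control the accumulation using the $\langle\gamma\rangle$-periodicity of the entire configuration, reducing the required bound $w^2 q(w) = O(1)$ to a uniform estimate on a single fundamental domain. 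The cusp case is comparatively easy, as the leaves are disjoint vertical lines with a common endpoint and $f$ is a finite piecewise-M\"obius modification of $\tfrac{1}{2\pi i}\log w$.
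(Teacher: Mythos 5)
Your overall strategy --- reduce to a local model at each end, compute the grafted peripheral holonomy as a product of elliptic rotations composed with the original deck transformation, and then read off the Schwarzian of an explicit model developing map in a puncture coordinate $w$ --- is exactly the paper's, and your monodromy computation (multiplier scaled by $e^{\pm i\Theta}$) agrees with the paper's formula. However, there is a genuine gap in your list of local models. You assert that in the parabolic case the developing map is, up to M\"obius post-composition, $\tfrac{1}{2\pi i}\log w$, with $S(\log w)=\tfrac{1}{2w^2}$, and you conclude that the pole always has order exactly $2$. This is false: when the total weight entering a cusp is $\alpha=2\pi n$ with $n\geq 1$ and the translation part $c$ of the grafted holonomy (see \eqref{cdef}) is nonzero, the holonomy is parabolic but the developing map descends to $w\mapsto w^{-n}+\log w$, not to $\log w$; its Schwarzian is $\bigl(\tfrac{1-n^2}{2}w^{-2}-2nw^{n-2}+O(w^{2n-2})\bigr)dw^2$, which has a pole of order $1$ when $n=1$. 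Likewise, when $c=0$ the map is $w\mapsto w^n$, whose Schwarzian vanishes identically for $n=1$. These cases are precisely what produce simple poles and apparent singularities (Corollary \ref{cor:simp}), and your argument as written would both misidentify the leading coefficient $a_p$ (which you want as a byproduct, and which the paper needs for Lemma \ref{lem:regexp}) and prove the false claim that the order is always exactly $2$. The point is that the multiplier of the peripheral monodromy does not determine the local model: the ``wrapping number'' $n$ is an extra discrete invariant, and one must match the grafting data ($\alpha$ and $c$) against the full classification of solutions of the Schwarzian equation at a regular singularity, as the paper does.

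A second, smaller omission: at a geodesic boundary end onto which no leaves spiral, the lamination by convention still contains the boundary geodesic itself with infinite weight, and grafting there inserts a ``logarithmic end'' (a semi-infinite chain of copies of $\cp$) rather than finitely many lunes. Your finite piecewise-M\"obius description does not cover this configuration; the resulting map $w\mapsto i\,w^{\frac{1}{2\pi i}\log\lambda}$ does ultimately fit your $w^{\rho}$ formula, but the case needs to be treated separately since it is not a finite grafting.
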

\begin{proof}
We shall consider the two cases of a cusp end  and a geodesic end separately:
\begin{itemize}
    \item For a cusp, we can assume that it has a punctured-disk neighbourhood $\mathbb{D}^\ast$  that lifts to $H=\{z : \textrm{Im}(z)>a\}$, with $\mathbb{D}^\ast$  biholomorphic to $H/\langle z\rightarrow z+1\rangle$. Let the lifts of the finitely many geodesics entering the cusp be given by $\{z : \textrm{Re}(z)=a_j\}$ with corresponding weights $\alpha_j$ for $j=1,2,\dots,r$ (Assuming without loss of generality that $0\leq a_1 <a_2<\dots<a_r<1$). Let us denote $\omega_j=e^{i\alpha_j}$.  Define by $E_i(w)$ the elliptic element $z\mapsto \omega_i^{-1} (z-w) +w$ (i.e. clockwise rotation by $\alpha_j$ about the endpoints $\infty$ and $w$), and let $T$ denote the translation $z\mapsto z+1$.

    \vspace{.05in}

    Now, referring to \cite[Lemma 5.5]{Dum}, the monodromy after bending will be conjugate to the element $E_1(a_1)\circ E_2(a_2) \circ ... \circ E_r(a_r)\circ T$, which is easily seen to be 
    \begin{equation}\label{mon-exp}
    z\mapsto \omega_1^{-1}\omega_2^{-1}\dots\omega_r^{-1} z+c
    \end{equation}
    where the constant 
    \begin{equation}\label{cdef} 
        c=a_1+\sum\limits_{i=1}^r \omega_1^{-1}\omega_2^{-1}\dots\omega_i^{-1}(a_{i+1}-a_i)
    \end{equation}
   where we set $a_{r+1}=1$. This element is elliptic if $\alpha :=\sum \alpha_i$ is not a multiple of $2\pi$. Otherwise, this is either a parabolic element or the identity.

    \vspace{.05in}

    To determine the developing map and compute  the Schwarzian derivative, we divide into the following cases. Note that the possible developing maps at a regular singularity are classified by studying solutions of the Schwarzian equation \eqref{schw}; see \S4.1.1 for a discussion. 
    \begin{enumerate}
        \item \textbf{$\alpha$ is not an integer multiple of $2\pi$:} Recall that the operation of grafting introduces ``lunes" (regions in $\cp$ bounded by circular arcs) at every lift of a geodesic leaf entering  $\mathbb{D}^\ast$. The total sum of the angles of lunes is $\alpha$.  Recall that the resulting peripheral monodromy after grafting is an elliptic rotation of angle $\alpha$; we  can assume that it fixes the point $\infty \in \overline{\mathbb{R}} \subset \cp$.  Let $F$ be a fundamental domain of the $\mathbb{Z}$-action on $H$, bounded by  the vertical lines $\{Re(z)=0\} $ and $\{Re(z)=1\}$. On $F$ the developing map is a conformal map, that takes these two sides of $F$ to two circular arcs incident at $\infty$ that differ by an elliptic rotation of angle $\alpha$ fixing $\infty$. Such a conformal map is the map  
        $z\mapsto e^{-2\pi i \alpha z}$; indeed, this takes the two boundary lines of $F$ to the circular arcs $\{e^{2\pi x} : x \in \mathbb{R}^+\}$ and $\{e^{2\pi i \alpha}\cdot e^{2\pi x} : x \in \mathbb{R}^+\}$.  On the punctured disk $\mathbb{D}^\ast$, the developing map  descends to the map $f: w\mapsto w^{-\alpha/2\pi}$, since the universal covering map from $H$ to $\mathbb{D}^\ast$ is $z\mapsto w := e^{2\pi i z}$. 
        

        Moreover, the Schwarzian derivative of the developing map on $F$ descends to the Schwarzian derivative of $f$ on $\mathbb{D^\ast}$.  Computing this Schwarzian derivative using \eqref{schw-deriv}, we obtain $S(f) = \frac{4\pi^2-\alpha^2}{8\pi^2w^2}dw^2$. Since $\alpha \neq \pm 2\pi$,  this quadratic differential has a pole of order $2$ at the puncture.

        \item \textbf{$\alpha$ is an integer multiple of $2\pi$:} Let $\alpha = 2n\pi, n\geq 0$. In the case that $c=0$, note that $n >0$, and we have, from the same discussion as above, that the developing map descends to a map of the form $w\mapsto w^n$ in a neighborhood of the puncture, and hence the Schwarzian derivative is $\frac{1-n^2}{2w^2}dw^2$, which has a pole of order $2$ at the puncture if $n\neq 1$, and a zero at the puncture if $n=1$. 
        
        Henceforth, let us assume that $c\neq 0$, hence the monodromy of the structure around the puncture is a parabolic element.
        In this case, the developing map descends to the map $f :w \mapsto w^{-n}+\textrm{log}(w)$ on the punctured disk. On the universal cover $H$, the developing map is $z\mapsto e^{- 2\pi in z}+2\pi i z$. Indeed, this takes the lines $\{Re(z)=0\}$ and $\{Re(z)=1\}$ to the circular arcs $\{e^{2\pi n x}-2\pi x : x \in \mathbb{R}^+\}$ and $\{e^{2\pi n x} - 2\pi x + 2\pi i  : x \in \mathbb{R}^+\}$ in $\cp$ both incident at $\infty$, and having the same tangential direction there. The monodromy around the puncture in this case is given by $z \mapsto z + 2\pi i $. As before the developing map ``wraps" the infinite strip $\{ 0\leq Re(z)\leq 1 \}$ on $\cp$ by $2n\pi$. 
        
        Computing the Schwarzian derivative of $f$, we obtain  $$S(f) = \frac{w^{2n}-w^n(2n^3+2n)-n^2(n^2-1)}{2w^2(w^n-n)^2} dw^2.$$ If $n=0$, this equals $\frac{1}{2}w^{-2}dw^2$, hence it has a pole of order $2$. For $n\geq1$, we can expand near $0$ to get $(\frac{1-n^2}{2}w^{-2} - 2nw^{n-2} + O(w^{2n-2})  )dw^2$. Clearly, it has a pole of order $2$ if $n\geq 2$. If $n=1$, then it has a pole of order $1$. (This final observation results in the next Corollary \ref{cor:simp}.)
    \end{enumerate}

   \vspace{.05in} 
   
    \item For a geodesic boundary end, we can take the  universal cover of its neighbourhood to be $B=\{z : \frac{\pi}{2}-\epsilon<\textrm{arg}(z)<\frac{\pi}{2}\} \subset \mathbb{H}^2$ with the line $\{\textrm{Re}(z)=0\}$ mapping onto the geodesic. Here, we are assuming that the lift of the surface lies to the right of this geodesic line. The end is biholomorphic to $B/\langle z \rightarrow \lambda z \rangle$, where $\textrm{log}(\lambda)$ is the length of the geodesic end. Now, note that the geodesics entering the end can spiral in one of two directions, clockwise or anticlockwise. Correspondingly, the geodesics either have endpoints at $0$ and the positive real axis, or at $\infty$ and the positive real axis, respectively. 

        \vspace{.05in}

    Let us first consider the case of geodesics spiralling anticlockwise into the boundary component. Let the geodesics entering the end of a fundamental domain $U := \{ z \in \mathbb{H}^2 : 1\leq \textrm{Re}(z) < \lambda \}$ be given by $\{\gamma_i: Re(z)=a_i \}$, assuming $1\leq a_1<\dots<a_r<\lambda$. Recall the definitions of $\alpha_j$, $\omega_j$ and $\alpha$; $\alpha_j$ are the weights of the grafting geodesics, $\alpha = \sum \alpha_j$ is the total weight, and $\omega_j = e^{i\alpha_j}$. 

        \vspace{.05in}

    This time, the monodromy after grafting can be computed as follows. Denoting $E_i(w)$ to be the elliptic elements as before, and $T$ to be the map $z\mapsto \lambda z$, we have from   \cite[Lemma 5.5]{Dum} that the monodromy after bending will be conjugate to $E_1(a_1)\circ E_2(a_2)\circ\dots\circ E_r(a_r)\circ T$, which is computed to be $z\mapsto \lambda\omega_1^{-1}\omega_2^{-1}\dots\omega_r^{-1} z+c$, for $c=(a_1-1)+\sum \omega_1^{-1}\omega_2^{-1}\dots\omega_i^{-1}(a_{i+1}-a_i)$ (where $a_{r+1}=\lambda$). Since $\lambda>1$, the monodromy is either a loxodromic or hyperbolic element.

        \vspace{.05in}

     As before, we now compute the Schwarzian derivative of the developing map, and split into two cases depending on the total angle $\alpha$ of the leaves of the lamination spiralling onto the geodesic boundary component:

        \vspace{.05in} 

    \begin{enumerate}
        \item \textbf{$\alpha = 0$ :} (see also Lemma 3.4. of \cite{Gup}) In this case, recall that the weight on the geodesic boundary is infinite, and hence we perform an ``infinite grafting" on any lift in the universal cover.  This amounts to attaching a ``logarithmic end", or ``semi-infinite lune" $\mathcal{L}_\infty$, i.e.\  semi-infinite chain of $\cp$-s each slit along an identical arc, to any such lift. (See \S4.2 of \cite{Gup-Mj}.) Take such a lift to be the vertical geodesic $\textrm{Re}(z)=0$ in $\mathbb{H}^2$, with the lift of the surface lies to its right. The infinite grafting does not change the monodromy of the end, so it remains $\langle z \rightarrow \lambda z \rangle$.  Recall that the semi-infinite lune that is grafted in, descends to what is conformally a punctured disk $\mathbb{D}^\ast$ on the surface (\textit{c.f.} the proof of Lemma 4.3 in \cite{Gup-Mj}). The developing map restricted to the universal cover of the punctured disk is thus the conformal diffeomorphism $f:H \to \mathcal{L}_\infty$  defined by $z \mapsto ie^{\textrm{log}(\lambda) z}$. (Recall here that we are taking $H/\langle z \mapsto z +1 \rangle \cong \mathbb{D}^\ast$ as before,) 
        
        
        This time, the  developing map descends to the map   $g(w) =  iw^{\frac{1}{2\pi i}\textrm{log}(\lambda)}$ on $\mathbb{D}^\ast$, which has a Schwarzian derivative given by $S(g) = \frac{4\pi^2+\log(\lambda)^2}{8\pi^2w^2}dw^2$, which has clearly a pole of order 2.

           \vspace{.05in}

        \item \textbf{$\alpha\neq 0$ :} In this case, we first identify a suitable fundamental domain for the universal cover $B$ of the geodesic boundary end for which it is easier to compute a uniformizing map. As earlier, consider the domain $U := \{ z \in \mathbb{H}^2: 1\leq\textrm{Re}(z) < \lambda\}$, and let $V$ be its image under the map $z\mapsto \frac{1}{z}$. Then, $V$ is the region  bounded by the semicircles joining $0$ to $\frac{1}{\lambda}$ and $0$ to $1$, in the lower half-plane (the shaded region in Figure ~\ref{fig2}). The grafting geodesics thus become semicircles in the lower half-plane joining $0$ to $\frac{1}{a_i}$. We now follow the same argument as before. 

        \vspace{.05in} 
        
        First, note that the monodromy around the puncture is a loxodromic element that fixes $0$, given by $z\mapsto \lambda^{-1} e^{i\alpha}z$ (this is using the monodromy formula we found earlier, conjugated by the map $z\mapsto \frac{1}{z}$). Next, grafting introduces lunes of weight $\alpha_i$ along the semicircles in the lower-half plane. This results in a total angle of $\alpha$ between the tangents at $0$ to the boundary semicircles after grafting.  Using these observations, this time  the developing map descends to the map $g(w) =  w^{\frac{1}{2\pi}(\alpha+i\textrm{log}(\lambda))}$ on  $\mathbb{D}^\ast$. Indeed, working in the punctured disk, the two images of a radial slit under $g$  have an angle of $\alpha$ between them, and the monodromy element  mapping one to the other is given by multiplication with $(e^{2\pi i})^{\frac{1}{2\pi}(\alpha+i\log(\lambda))}=\lambda^{-1}e^{i\alpha}$ as desired.

        Computing the Schwarzian derivative, we get $S(g) = \frac{4\pi^2-(\alpha+i\log(\lambda))^2}{8\pi^2w^2}dw^2$. Since $\alpha+i\log(\lambda) \neq \pm 2\pi$, we again get a pole of order $2$.  
    \end{enumerate}  

\begin{figure}
\centering
\def\svgwidth{0.5\columnwidth}
\begingroup%
  \makeatletter%
  \providecommand\color[2][]{%
    \errmessage{(Inkscape) Color is used for the text in Inkscape, but the package 'color.sty' is not loaded}%
    \renewcommand\color[2][]{}%
  }%
  \providecommand\transparent[1]{%
    \errmessage{(Inkscape) Transparency is used (non-zero) for the text in Inkscape, but the package 'transparent.sty' is not loaded}%
    \renewcommand\transparent[1]{}%
  }%
  \providecommand\rotatebox[2]{#2}%
  \newcommand*\fsize{\dimexpr\f@size pt\relax}%
  \newcommand*\lineheight[1]{\fontsize{\fsize}{#1\fsize}\selectfont}%
  \ifx\svgwidth\undefined%
    \setlength{\unitlength}{224.49751221bp}%
    \ifx\svgscale\undefined%
      \relax%
    \else%
      \setlength{\unitlength}{\unitlength * \real{\svgscale}}%
    \fi%
  \else%
    \setlength{\unitlength}{\svgwidth}%
  \fi%
  \global\let\svgwidth\undefined%
  \global\let\svgscale\undefined%
  \makeatother%
  \begin{picture}(1,0.57014806)%
    \lineheight{1}%
    \setlength\tabcolsep{0pt}%
    \put(0,0){\includegraphics[width=\unitlength,page=1]{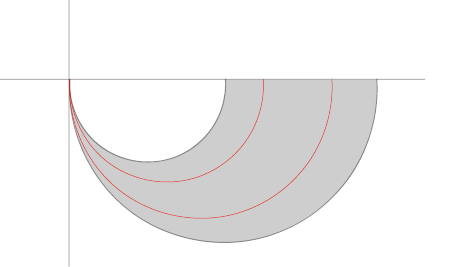}}%
    \put(0.15419568,0.41662093){\color[rgb]{0,0,0}\transparent{0.97100002}\makebox(0,0)[lt]{\lineheight{1.25}\smash{\begin{tabular}[t]{l}$0$\end{tabular}}}}%
    \put(0.4729402,0.42243754){\color[rgb]{0,0,0}\transparent{0.97100002}\makebox(0,0)[lt]{\lineheight{1.25}\smash{\begin{tabular}[t]{l}$\frac{1}{\lambda}$\end{tabular}}}}%
    \put(0.55552324,0.42358931){\color[rgb]{0,0,0}\transparent{0.97100002}\makebox(0,0)[lt]{\lineheight{1.25}\smash{\begin{tabular}[t]{l}$\frac{1}{a_2}$\end{tabular}}}}%
    \put(0.79719605,0.4202737){\color[rgb]{0,0,0}\transparent{0.97100002}\makebox(0,0)[lt]{\lineheight{1.25}\smash{\begin{tabular}[t]{l}$1$\end{tabular}}}}%
    \put(0.6935441,0.42305437){\color[rgb]{0,0,0}\transparent{0.97100002}\makebox(0,0)[lt]{\lineheight{1.25}\smash{\begin{tabular}[t]{l}$\frac{1}{a_1}$\end{tabular}}}}%
  \end{picture}%
\endgroup%

    \centering
    \caption{Grafting a geodesic boundary end: a fundamental domain in the universal cover (shown shaded) with the lifts of the spiralling weighted geodesic (shown in red). }
    \label{fig2}
\end{figure}

        \vspace{.05in}

    Now, it only remains to consider the case of geodesics spiralling clockwise into the end, i.e. the geodesics are semicircles in the upper half-plane. Note that if we take the map $z\mapsto\frac{1}{z}$, $B$ goes to its complex conjugate in $\mathbb{C}$ while the geodesics go to the lines $\textrm{Re}(z)=\frac{1}{a_i}$ in the lower half-plane. (See Figure 2.)  This is clearly the conjugate image of the earlier case of anticlockwise spiralling geodesics, with the grafting geodesics being given by $\textrm{Re}(z)=\frac{\lambda}{a_i}$ in the domain $U$ defined earlier. Thus, it follows that if $z\mapsto f(z)$ is the developing map for the earlier case, the new developing map is its Schwarz reflection, $z \mapsto \overline{f(\bar{z})}$. From this, it follows that the holonomy will be conjugate to $z\mapsto\lambda\omega_1\omega_2\dots\omega_r z +c$, for $c = (\frac{\lambda}{a_r}-1)+ \sum \omega_r\omega_{r-1}\dots\omega_{r+1-i} (\frac{\lambda}{a_{r-i}}-\frac{\lambda}{a_{r+1-i}}).$ This time,  developing map descends to a map $g$  on the punctured disk of the form $w\mapsto w^{\frac{1}{2\pi}(\alpha-i\textrm{log}(\lambda))}$, with Schwarzian derivative $S(g) = \frac{4\pi^2-(\alpha-i\log(\lambda))^2}{8\pi^2w^2}dw^2$  having a pole of order $2$.
    
\end{itemize}   \end{proof}

\noindent
\textit{Remark.} As a result of our computation in the above proof, we obtain a description of the grafting configuration that gives rise to poles of order $1$ - it occurs if and only if we graft a cusp end with $\alpha=2\pi$ and $c\neq0$. Thus, we have:

\begin{coro}\label{cor:simp} 
    If there is a pole of order $1$, it must be obtained by grafting a cusp end along a measured lamination with total weight of leaves going into the cusp equal to $2\pi$. Conversely, generically we obtain a pole of order $1$ by grafting at a cusp when the total weight of the leaves of the lamination going into the cusp is $2\pi$ -- the only exception is when $c=0$ (see \eqref{cdef}).
\end{coro}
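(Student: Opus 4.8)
The plan is to read off both implications directly from the case analysis already carried out in the proof of Lemma~\ref{regular_grafting}, since that proof records, for every type of end and every grafting configuration, the exact order of the pole of the resulting Schwarzian derivative. The only remaining work is to isolate the configurations producing a pole of order exactly $1$ and to confirm that no other configuration can.

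For the forward implication, I would first eliminate the ends that cannot contribute an order-$1$ pole. By Lemma~\ref{irreg_graft}, grafting along a lamination exiting a crown end yields a pole of order $n_i\geq 3$, so crown ends are excluded. For a geodesic boundary end, the monodromy computed in the proof of Lemma~\ref{regular_grafting} is always loxodromic or hyperbolic (since $\lambda>1$), and in each sub-case the Schwarzian derivative was found to have a pole of order exactly $2$; hence geodesic ends are excluded as well. This leaves only cusp ends. Inspecting the cusp computation, the Schwarzian has order $2$ whenever $\alpha$ is not an integer multiple of $2\pi$, and in the case $\alpha=2n\pi$ the order is again $2$ except in the single sub-case $n=1$ (equivalently $\alpha=2\pi$) with $c\neq 0$, where the expansion $\bigl(\tfrac{1-n^2}{2}w^{-2} - 2n\,w^{n-2} + O(w^{2n-2})\bigr)dw^2$ has vanishing leading coefficient $\tfrac{1-n^2}{2}=0$ and a surviving simple pole $-2w^{-1}$. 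Thus any order-$1$ pole must arise from grafting a cusp end with total incident weight $\alpha=2\pi$, which is the first assertion.

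For the converse, suppose we graft a cusp end with total incident weight $\alpha=2\pi$, i.e.\ $n=1$ in the notation of Lemma~\ref{regular_grafting}. If $c\neq 0$ (see~\eqref{cdef}) the peripheral monodromy is parabolic and the computation above produces a simple pole. The only exception is $c=0$: here the developing map descends to $w\mapsto w^{n}=w$, a biholomorphism whose Schwarzian derivative vanishes identically, so no pole appears. This is exactly the claimed genericity together with its exception.

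Since the relevant Schwarzian computations are already in hand, I do not expect a substantive obstacle. The one point requiring care is verifying \emph{completeness} of the case division---that every regular singularity produced by grafting falls into precisely one of the listed configurations (cusp versus geodesic end, and within the cusp case the trichotomy $\alpha\notin 2\pi\mathbb{Z}$, $\alpha\in 2\pi\mathbb{Z}$ with $c=0$, and $\alpha\in 2\pi\mathbb{Z}$ with $c\neq 0$)---together with the borderline check that the order-$2$ coefficient of the Schwarzian genuinely vanishes at $n=1$, so that the simple-pole term is not cancelled. Both are immediate from the formulas established in the proof of Lemma~\ref{regular_grafting}.
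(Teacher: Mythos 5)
Your proposal is correct and takes essentially the same route as the paper, which derives the corollary as an immediate consequence of the case-by-case Schwarzian computations in Lemma~\ref{regular_grafting} (together with Lemma~\ref{irreg_graft} to exclude crown ends); the decisive point in both is that the order-$2$ coefficient $\tfrac{1-n^2}{2}$ vanishes exactly at $n=1$ while the term $-2nw^{n-2}$ survives as a simple pole when $c\neq 0$. Your added care about completeness of the case division and the exclusion of geodesic boundary ends is consistent with, and slightly more explicit than, the paper's treatment.
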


\noindent \textbf{Example 1:} Consider the projective structure on $\mathbb{CP}^1\backslash \{0,1,\infty\}$, i.e.\ the thrice punctured sphere, obtained by the trivial chart via the inclusion of this surface in $\mathbb{CP}^1$. Then, if one does the inverse grafting construction via constructing a pleated plane, one would obtain that the pleated plane would be the ideal triangle in the interior of the ball $\mathbb{H}^3$ with vertices at $\{0,1,\infty\} \in \partial_\infty \mathbb{H}^3$. So, a grafting description of this structure would be obtained by taking a hyperbolic sphere with three cusps and grafting in lunes of weight $\pi$ along three geodesic lines running  between pairs of cusps (see Figure 3).  Since the developing  map is just the identity map, its Schwarzian derivative  is identically zero. Now, we can compute the constant $c$ (see \eqref{cdef}) in this case by considering the geodesics going into the cusp $\mathbb{H}/\langle z\mapsto z+1 \rangle$ given by $a_1 = 0, a_2 = \frac{1}{2}$ (according to the notation used in Lemma ~\ref{regular_grafting}) both having weight $\pi$. The computation yields $c = 0 + (-1)(\frac{1}{2}-0) + (1)(1-\frac{1}{2}) = 0$. This illustrates the final statement of Corollary ~\ref{cor:simp} -- even though the total weight at each puncture is $2\pi$, they are not poles of order $1$.

\vspace{0.1 in}

\noindent \textbf{Example 2:} Consider another projective structure on $\mathbb{CP}^1\backslash \{0,1,\infty\}$, obtained by the developing map that descends to the map $f(z)=\textrm{log}(z)+\frac{1}{z}$ (More precisely, this structure descends from a developing map on the intermediate cover $\mathbb{C}\backslash\{2n\pi i : n \in \mathbb{Z}\}$ given by $z \mapsto z + e^{-z}$). As in the previous case, one can perform the inverse grafting operation by constructing a pleated plane, to obtain the following grafting description for this structure: take the thrice punctured sphere and graft along two weighted geodesics: one having both ends going into $1$ and of weight $\pi$, and the other with ends going into $1$ and $0$, with a weight of $2\pi$. (See Figure 3.) Note that since the punctures corresponding to $\infty$ and $1$ have total weight of lamination not equal to $2\pi$, and they are poles of order $2$. Computing the constant $c$ at the puncture corresponding to $0$, we get $c = 1  \neq 0$. So, we expect $0$ to be a pole of order $1$. We can compute the Schwarzian derivative of the map $f$ at the punctures to get that indeed $1$ and $\infty$ are poles of order $2$, while $0$ is a simple pole. This verifies Corollary ~\ref{cor:simp} in this case.

\vspace{.1in} 

\begin{figure}[h]
\centering
\def\svgwidth{0.55\columnwidth}
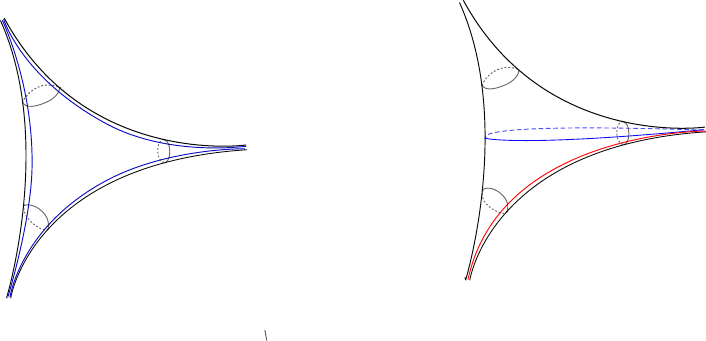
    \centering
    \caption{Grafting description of the projective structures in Example 1 (left) and Example 2 (right). The blue geodesics have weight $\pi$ and the red geodesic has weight $2\pi$.}
    \label{fig1}
\end{figure}

    We also note the following observation culled from the proof of the above Lemma (see also Lemma 3.2 of \cite{Gup}):

    \begin{coro}\label{cor:mon} 
    The following information can be inferred from the monodromy around a regular singularity obtained by grafting a cusp or geodesic boundary end:
    
    \begin{itemize}

    \item[(i)]  The type of end (i.e. cusp or geodesic boundary) and the case of the latter, the length of the geodesic boundary.

    \item[(ii)]  The total weight of the leaves of the grafting lamination incident at that end,  up to positive integer multiples of $2\pi$.
    
    \end{itemize}

    \end{coro}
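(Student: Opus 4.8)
The plan is to read off the required data directly from the explicit peripheral monodromy elements computed in the proof of Lemma \ref{regular_grafting}, using the conjugation-invariant \emph{multiplier} of the holonomy as the organizing quantity. Recall that any non-trivial element of $\pslc$ with two fixed points is conjugate to $z \mapsto \kappa z$ for some $\kappa \in \mathbb{C}^\ast$, and that $\kappa$ is determined by the conjugacy class up to the inversion $\kappa \mapsto \kappa^{-1}$ (equivalently, it is recovered from the trace via $\tra^2 = \kappa + \kappa^{-1} + 2$). Hence $|\kappa|$, and $\arg(\kappa)$ up to sign, are genuine invariants of the monodromy, and it is these that I claim encode the geometric information.

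First I would tabulate the multiplier in each case. For a cusp end the monodromy \eqref{mon-exp} is $z \mapsto e^{-i\alpha}z + c$, so either $\kappa = e^{-i\alpha}$ (when $\alpha \notin 2\pi\mathbb{Z}$, the element being elliptic) or the element is parabolic with $\kappa = 1$ (when $\alpha \in 2\pi\mathbb{Z}$ and $c \neq 0$); in both cases $|\kappa| = 1$. For a geodesic boundary end, the computations produce a loxodromic monodromy with multiplier $\lambda^{\pm 1}e^{\pm i\alpha}$, the signs depending on the spiralling direction and on the chosen fixed point, so that $|\kappa| = \lambda^{\pm 1} \neq 1$ since $\lambda > 1$.

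For part (i), the modulus $|\kappa|$ then separates the two types of end: $|\kappa| = 1$ forces a cusp, while $|\kappa| \neq 1$ forces a geodesic boundary, and in the latter case $\bigl|\log|\kappa|\bigr| = \log\lambda$ is exactly the boundary length, unaffected by the inversion ambiguity in $\kappa$. For part (ii), in every case $\arg(\kappa) \equiv \pm\alpha \pmod{2\pi}$, so the total incident weight $\alpha$ is recovered modulo $2\pi$; since $\alpha \geq 0$ this is precisely a determination up to positive integer multiples of $2\pi$, and the residual sign ambiguity $\kappa \mapsto \kappa^{-1}$ simply reflects the two possible spiralling directions at a geodesic end.

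The one point I would flag explicitly — and the only genuine obstacle — is the fully degenerate cusp case $\alpha \in 2\pi\mathbb{Z}$ with $c = 0$, where the monodromy is the identity and carries no information at all; this is exactly the apparent-singularity situation of case (2) in Lemma \ref{regular_grafting}, and should be noted as an exception (or excluded by the standing assumption of non-trivial peripheral monodromy). Apart from this, the corollary is a direct extraction, so the substance of the argument lies not in any computation but in verifying that the multiplier, rather than the raw matrix, is the correct invariant to consult, and that its two indeterminacies — overall inversion and the $2\pi\mathbb{Z}$ indeterminacy of $\alpha$ — match precisely the ambiguities asserted in the statement.
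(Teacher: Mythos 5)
Your proposal is correct and follows essentially the same route as the paper: the paper's proof likewise just reads off the end type from whether the peripheral monodromy is loxodromic (with boundary length $|\log\lambda|$ in that case), and recovers the total weight as $\arg(\lambda)$ modulo $2\pi$, using the explicit computations from Lemma \ref{regular_grafting}; your extra care about the multiplier being a conjugacy invariant only up to inversion is a welcome precision. The one place you deviate is the identity-monodromy case, which you flag as an exception carrying ``no information at all'' --- but in fact no exclusion is needed: since a grafted geodesic boundary end always yields loxodromic monodromy, identity monodromy already forces the end to be a cusp, and it determines the total weight to be $0$ modulo $2\pi$, which is exactly the precision claimed in part (ii); the paper's proof treats this case alongside the parabolic one without exception.
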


    \begin{proof}
     We simply note the following from the proof of Lemma ~\ref{irreg_graft}:
    \begin{itemize}
        \item[(i)]  If the monodromy is not loxodromic, we can infer the type of end to be a cusp. If the monodromy is loxodromic and conjugate to the map $z\mapsto \lambda z$, then the length of the boundary is given by $|\log(\lambda)|$.
        \item[(ii)] If the monodromy is parabolic or identity, clearly the total weight is $0$ modulo $2\pi$. Otherwise, the monodromy is conjugate to $z\mapsto \lambda z$, and the total weight of leaves is given by $\arg(\lambda)$ modulo $2\pi$. 
    \end{itemize}  
    \vspace{-0.5cm}
    \end{proof}

 Since the  Lemmas \ref{irreg_graft} and \ref{regular_grafting} involve local computations in a neighborhood of each pole, by combining them we obtain:
 \begin{prop}
 The result of grafting a marked hyperbolic surface $X\in \mathcal{T}(\mathbb{S},\mathbb{M})$ along a measured lamination $(L,\mu)\in \mathcal{ML}(X)$ is a marked projective structure in $\mathcal{P}(\mathbb{S},\mathbb{M})$.
 \end{prop}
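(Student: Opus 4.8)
The plan is to establish the Proposition by combining the two local analyses already carried out, since the order of a pole of the Schwarzian derivative is a purely local invariant of the developing map near each puncture. First I would recall that grafting a hyperbolic structure along a measured lamination produces a complex projective structure on the open surface $\mathbb{S}\setminus\mathbb{M}$; its developing map is a local biholomorphism away from the punctures, so its Schwarzian derivative is automatically a \emph{holomorphic} quadratic differential on $X^\circ$. What remains is to control the singularity at each end of $X$ and to check that the orders match the prescribed tuple $\mathfrak{n}$.

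The key point enabling a local computation is Lemma \ref{lem:lam}: for each end there is a neighbourhood into which only finitely many leaves of $(L,\mu)$ enter, so the grafting inserts only finitely many lunes there, and the explicit developing-map models computed in the preceding lemmas are valid. I would then split into the three types of ends. For the $k$ boundary components of $\mathbb{S}$, which correspond to crown ends of $X$, Lemma \ref{irreg_graft} gives a pole of order exactly $n_i$. For the interior punctures $p\in\mathbb{P}$, which correspond to cusp or geodesic ends, Lemma \ref{regular_grafting} gives a pole of order at most $2$. Since a pole of bounded order is in particular a meromorphic singularity, the Schwarzian extends across each puncture.

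Finally, I would assemble these into a global statement: the punctured neighbourhoods of the ends are disjoint, so the Schwarzian derivative---holomorphic on $X^\circ$ and with a pole of finite, prescribed order at each of the $(k+m)$ punctures---descends to a single meromorphic quadratic differential on the compact Riemann surface $X$ obtained by filling in the punctures. Its pole orders are $n_i$ at the $k$ boundary-type poles and at most $2$ at the $m$ interior punctures, which is precisely the defining data of an element of $\mathcal{P}(\mathbb{S},\mathbb{M})$, completing the argument.

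Because all of the delicate computation---identifying the local form of the developing map and evaluating $S(f)$ in each case---is already contained in Lemmas \ref{irreg_graft} and \ref{regular_grafting}, the proof of the Proposition is essentially bookkeeping. The only genuine subtlety, and what I would regard as the main point to be careful about, is the passage from ``isolated singularities of bounded order'' to ``a globally meromorphic quadratic differential'': this is exactly where the finiteness in Lemma \ref{lem:lam} is needed, guaranteeing that the local models describe the structure on a full punctured neighbourhood rather than only along sequences of leaves accumulating at the ends.
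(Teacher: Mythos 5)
Your proposal is correct and follows essentially the same route as the paper: the paper likewise observes that Lemmas \ref{irreg_graft} and \ref{regular_grafting} are local computations at each end (with the finiteness of incident leaves from Lemma \ref{lem:lam} already noted at the start of the section) and simply combines them to conclude. Your write-up just makes the bookkeeping and the role of the finiteness statement more explicit.
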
\label{grafting_structures}

Finally, we note a Lemma that relates the grafting description at a geodesic or cusp end, to the corresponding exponent of the Schwarzian derivative of the resulting projective structure:
\begin{lem}\label{lem:regexp}
    Suppose we obtain a projective structure by grafting a geodesic boundary end of length $l$ (which can be zero, giving a cusp end) and total weight of the leaves of the grafting lamination spiralling onto that end being $\alpha$. Let $q(z)dz^2$ denote the Schwarzian derivative of the projective structure at the puncture corresponding to the end. Let the exponent of this quadratic differential be $r$. Then, if the geodesics spiral anticlockwise, we have $r=\pm (l-i\alpha)$. If the geodesics spiral clockwise, we have $r=\pm (l+i\alpha)$. 
\end{lem}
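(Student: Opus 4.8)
The plan is to read off the leading coefficient $a_p = \lim_{z\to p} q(z)z^2$ directly from the explicit Schwarzian derivatives already computed in the proof of Lemma~\ref{regular_grafting}, and then to substitute into the definition of the exponent \eqref{expon}, namely $r = \pm 2\pi i\sqrt{1-2a_p}$. Thus the entire lemma reduces to checking, in each grafting configuration, the single algebraic identity
\begin{equation*}
1-2a_p=\frac{(\alpha+il)^2}{4\pi^2}\quad\text{(anticlockwise)},\qquad 1-2a_p=\frac{(\alpha-il)^2}{4\pi^2}\quad\text{(clockwise)},
\end{equation*}
where $l=\log\lambda$ is the boundary length. Indeed, once this holds we have $\sqrt{1-2a_p}=\pm(\alpha\pm il)/(2\pi)$, and multiplying by $2\pi i$ gives $r=\pm i(\alpha+il)=\pm(l-i\alpha)$ in the anticlockwise case and $r=\pm i(\alpha-il)=\pm(l+i\alpha)$ in the clockwise case, which are exactly the claimed formulas. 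Here the overall sign $\pm$ absorbs both the sign ambiguity of the square root and that built into the definition of the exponent, so no separate tracking of branches is needed.

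First I would treat the generic geodesic-boundary case $\alpha\neq 0$. For anticlockwise spiralling, the proof of Lemma~\ref{regular_grafting} records $S(g)=\frac{4\pi^2-(\alpha+i\log\lambda)^2}{8\pi^2 w^2}\,dw^2$, so $a_p=\frac{4\pi^2-(\alpha+il)^2}{8\pi^2}$ and the first displayed identity is immediate. The clockwise case is identical, using the Schwarz-reflected expression $S(g)=\frac{4\pi^2-(\alpha-i\log\lambda)^2}{8\pi^2 w^2}\,dw^2$ computed there. The two degenerate boundary subcases then follow by specialization: $l=0$ (a cusp) gives $a_p=\frac{4\pi^2-\alpha^2}{8\pi^2}$, and $\alpha=0$ (the infinite-grafting, or ``semi-infinite lune'', case) gives $a_p=\frac{4\pi^2+l^2}{8\pi^2}$, and each is checked against the formula.

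Finally I would confirm the remaining degenerate cusp cases in which $\alpha=2n\pi$ and the monodromy is parabolic or the identity (so $l=0$). There the proof of Lemma~\ref{regular_grafting} gives $a_p=\frac{1-n^2}{2}$ (and $a_p=\tfrac12$ in the exceptional subcase $n=0$, $c\neq 0$), whence $1-2a_p=n^2$ and $r=\pm 2\pi i\, n=\pm i\alpha=\pm(l-i\alpha)$, in agreement with the generic formula. I expect the only real (and modest) obstacle to be the bookkeeping across these degenerate branches: verifying that the cusp/parabolic configurations, where the branch point $1-2a_p=0$ can occur, fit the uniform formula rather than requiring a separate statement, and that the sign ambiguities in $r$ and in $\sqrt{1-2a_p}$ remain compatible throughout. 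Everything else is the elementary simplification above, since the substantive analytic work — the actual computation of the Schwarzian derivatives — has already been carried out in Lemma~\ref{regular_grafting}.
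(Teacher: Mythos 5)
Your proposal is correct and follows essentially the same route as the paper's proof: both read off the leading coefficient $a_p$ from the explicit Schwarzian derivatives computed in Lemma~\ref{regular_grafting} and substitute into the definition $r=\pm 2\pi i\sqrt{1-2a_p}$, running through the same case division (cusp with $\alpha$ generic or a multiple of $2\pi$, geodesic boundary with $\alpha=0$ or $\alpha\neq 0$, clockwise versus anticlockwise). Your write-up is merely a little more explicit about the intermediate algebraic identity and the sign bookkeeping, but the argument is the same.
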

\begin{proof}
    This immediately follows from a simple calculation of the exponent using the Schwarzian derivatives we computed in the proof of Lemma ~\ref{regular_grafting}. Recall that the exponent $r$ is given as $r = \pm 2\pi i \sqrt{1-2a}$, where $a$ is the coefficient of $z^{-2}dz^2$ in the Schwarzian derivative. We have the following cases:
    \begin{itemize}
        \item $l=0$, i.e. we have a cusp end:
        \begin{enumerate}
            \item {$\alpha$ is not an integer multiple of $2\pi$:} the exponent is $\pm i\alpha$.
            \item {$\alpha = 2\pi n$ for $n\geq0$ :} 
            in both the cases considered, the coefficient of $z^{-2}dz^2$ in the Schwarzian derivative is $\frac{1-n^2}{2}$, so the exponent is $\pm 2\pi i n = \pm i \alpha$.
        \end{enumerate}
        \item We have a geodesic boundary end with $l>0$ :
        \begin{enumerate}
            \item $\alpha = 0$ : the exponent is $\pm \log(\lambda) = \pm l$.
            \item $\alpha \neq 0$ : If the spiralling is anticlockwise, the exponent is $\pm (i \alpha - \log (\lambda))= \pm (l-i\alpha)$. If the spiralling is clockwise, it is $\pm (l+i\alpha)$.
        \end{enumerate}
    \end{itemize}
    \vspace{-0.5cm}
    
\end{proof}


\end{subsection}

\subsection{Defining the Unsigned Grafting Map}
In this and the following sections, we omit the pair $(\mathbb{S},\mathbb{M})$ for the parameter spaces if there is no source of confusion.
We define the unsigned grafting map, 
\begin{equation}\label{eq:unsign}
 Gr': \mathcal{TML} \to  \mathcal{P}(\sm)
\end{equation}
where the domain is the space of pairs
$$\mathcal{TML} =  \{ (X,\lambda) : X \in \mathcal{T}, \lambda \in \mathcal{ML}(X)\}$$
which is a quotient of the product of signed spaces $\mathcal{T}^{\pm}(\sm)\times\mathcal{ML}^{\pm}(\sm)$, obtained by identifying the pairs  differing only in the signings. 

Note that there is a forgetful projection map 
 \begin{equation}
\label{map1} F: \mathcal{T}^{\pm}\times\mathcal{ML}^{\pm} \rightarrow \mathcal{TML}
 \end{equation}
that is described as follows:
Let us take an element $(X_{\sigma},\lambda_\tau) \in \mathcal{T}^{\pm}\times\mathcal{ML}^{\pm}$, where $\sigma,\tau$ denote the respective signings. We need to define the corresponding element on the right-hand side. We take $X$ to be the underlying hyperbolic structure of $X_{\sigma}$. To define the measured lamination $\lambda_{\tau}$  it remains to  prescribe the direction in which the leaves of the lamination spiral on entering the geodesic boundary ends. We define the spiralling direction to be clockwise (with respect to the orientation of $X$) if the signs $\sigma$ and $\tau$ match at the end, and anticlockwise otherwise.

\vspace{.05in}

 \noindent \textit{Remark.} There is a natural continuous map $\mathcal{TML}\rightarrow\mathcal{T}$ that takes the pair $(X,\lambda)$ to $X$. However, this map is not a fiber bundle, since the fiber over an element $X$ is $\mathcal{ML}(X)$ which by Proposition \ref{dim_measured_laminations} is homeomorphic to $\mathbb{R}^{\chi-p}\times\mathbb{R}_{\geq0}^{p}$ (where $p$ is the number of cusp ends of $X$), and these fibers are not homeomorphic to each other. In particular, $\mathcal{TML}$ cannot be written as a natural product $\mathcal{T}\times\mathcal{ML}$.

\vspace{.05in}

\noindent We observe:

\begin{prop}\label{prop:cont}
    The map $Gr'$ is continuous.
\end{prop}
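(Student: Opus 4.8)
The plan is to establish continuity by reducing to the continuity of the grafting operation on the universal cover, where one can invoke the correspondence with measured laminations on $\mathbb{H}^2$ provided by the Kulkarni--Pinkall machinery (Theorem \ref{thm: kulpink}). Concretely, I would argue that a convergent sequence $(X_n, \lambda_n) \to (X, \lambda)$ in $\mathcal{TML}$ produces developing maps $f_n : \widetilde{X}_n \to \cp$ whose Schwarzian derivatives converge, locally uniformly, to that of the limit $f$, and hence the corresponding projective structures converge in $\mathcal{P}(\sm)$. The crux is that grafting is defined geometrically by equivariantly inserting lunes along lifts of the leaves of $\lambda$, and the weak-$*$ convergence of the measured laminations (as elements of the space of measures on $M_\infty$, per our definition of the topology on $\mathcal{ML}$) controls these insertions on compact subsets of the universal cover.

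First I would fix a basepoint and compact fundamental domains, and use the convergence $X_n \to X$ in $\mathcal{T}(\sm)$ together with $\lambda_n \to \lambda$ in $\mathcal{ML}$ to arrange that the lifted laminations $\widetilde{\lambda}_n$ converge to $\widetilde{\lambda}$ as Borel measures on $M_\infty$, uniformly on compact sets away from the diagonal. By Lemma \ref{lem:lam}, only finitely many leaves enter any fixed end, so on each compact piece of the surface the grafting is governed by finitely many weighted leaves whose endpoints and transverse weights vary continuously; the inserted lunes therefore vary continuously in $\cp$. This gives local uniform convergence of the developing maps, and since the Schwarzian derivative is a continuous (indeed holomorphic) operation in the locally uniform topology on maps into $\cp$, the associated quadratic differentials $q_n\,dz^2$ converge locally uniformly on the complement of the poles.

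The main obstacle I anticipate is the behavior at the ends, particularly the geodesic boundary ends, where the \emph{direction} of spiralling (clockwise versus anticlockwise) is part of the data and where convergence of the measure on $M_\infty$ must be matched with convergence of the pole data (the orders $\mathfrak{n}$ and the exponents) in $\mathcal{P}(\sm)$. Here I would lean on the explicit local models computed in Lemma \ref{regular_grafting} and Lemma \ref{lem:regexp}: the leading coefficient $a_p$, and hence the exponent $r_p$, depends continuously (through the formulas $r = \pm(l - i\alpha)$ or $r = \pm(l + i\alpha)$) on the boundary length $l$ and the total incident weight $\alpha$, both of which vary continuously along the convergent sequence. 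Since $\mathcal{P}(\sm)$ is topologized so that the pole orders are fixed and the Schwarzian derivatives converge locally uniformly on compacta (with the leading coefficients at the regular singularities varying continuously), these two ingredients — local uniform convergence away from the poles, plus continuity of the local pole data — together certify that $Gr'(X_n,\lambda_n) \to Gr'(X,\lambda)$ in $\mathcal{P}(\sm)$.

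Finally, I would remark that because the topology on $\mathcal{TML}$ is the quotient topology from $\mathcal{T}^{\pm}\times\mathcal{ML}^{\pm}$ under the map $F$ of \eqref{map1}, and $F$ is continuous and surjective, it suffices to verify continuity of the composite $Gr' \circ F$ on the signed product, which is often more convenient since the spiralling direction is then determined tautologically by the matching of signings $\sigma$ and $\tau$; the descent to $\mathcal{TML}$ is then automatic by the universal property of the quotient topology.
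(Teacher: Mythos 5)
Your proposal is correct and follows essentially the same route as the paper: both arguments reduce continuity to the observation that convergence in $\mathcal{TML}$ forces the lifted laminations to be close (weak-$*$, hence Hausdorff-close on compact subsets of the universal cover), so that the finitely many lunes inserted near each end, and therefore the developing maps, vary continuously. The one case the paper singles out that you pass over is the degeneration of a geodesic boundary end to a cusp (boundary length tending to $0$), which it handles via the Collar Lemma and Gromov--Hausdorff convergence; your weak-$*$ framework on $M_\infty$ covers this implicitly, but it merits an explicit word.
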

\begin{proof}
We only provide a sketch of the proof here since this is essentially identical to the proof of continuity of the grafting map for closed surfaces (which in turn follows from the discussion of the continuity of the       ``quakebend cocycle" in \cite[Chapter II.3.11]{EpMar}.)  

\vspace{.05in} 

In the closed case, the crucial observation is that if two pairs  $(X_1,\lambda_1)$ and $(X_2,\lambda_2)$ are close in $\mathcal{T}_g \times \mathcal{ML}$, the corresponding lifted laminations would be close in the universal cover $\mathbb{H}^2$, suitably normalized by fixing three ideal points. We describe this further: consider a $(1+\epsilon)$-quasi-isometry from $\mathbb{H}^2$ to $\mathbb{H}^2$ (fixing, say, $0,1,\infty \in \partial_\infty \mathbb{H}^2$) that descends to a $(1+\epsilon)$-quasi-isometry between the two surfaces. (Here $\epsilon>0$ is small if the surfaces $X_1$ and $X_2$ are close.) Such an almost-isometry extends to a homeomorphism of the boundary that is close to the identity map. Now recall a measured lamination can be thought of as a Borel measure on $\partial_\infty \mathbb{H}^2 \times \partial_\infty \mathbb{H}^2 \setminus \Delta$ (where $\Delta$ is the diagonal subspace). The fact that the ideal boundary correspondence is close to the identity map then implies that the lifts of same lamination $\lambda \in \mathcal{M})$ on the two hyperbolic surfaces will be close (in the Hausdorff metric) on any compact subset $K$ of the universal cover $\mathbb{H}^2$, where the respective universal covers are now identified via the $(1+\epsilon)$-quasi-isometry mentioned above. (In this argument ``close" means $\epsilon^\prime$-close where $\epsilon^\prime \to 0$ as $\epsilon\to 0$.)  Moreover, 
the topology on $\mathcal{ML}$ is the weak topology on the space of such measures, and nearby laminations in this topology will also be Hausdorff-close on $K$ (see, for example, Proposition 1.9 of \cite{Gabai} that follows arguments in \cite{Pen}.) 

\vspace{.05in} 

As a consequence, if one restricts to such a compact subset $K\subset \mathbb{H}^2$, the resulting developing maps after grafting $(X_1,\lambda_1)$ and  $(X_2,\lambda_2)$ will be close in $\cp$. (Recall that these developing maps are obtained by grafting in ``lunes" along the leaves of the lamination $\lambda_1$ and $\lambda_2$ respectively.) 

\vspace{.05in} 

The only two new cases to consider in our non-closed setting are:
\vspace{.05in} 

{(A)} Grafting along a hyperbolic surface with geodesic boundary, where the grafting lamination spirals on to the boundary. The leaves spiralling on to the boundary component are isolated geodesics, and in the universal cover, lifts to a sequence of geodesic lines accumulating to the lift of the boundary component. (See Figure 3.)  Hence any compact set $K \subset \mathbb{H}^2$ intersects finitely many such lines. As in the closed-surface argument above, as one varies the hyperbolic surface (this might vary the length of the geodesic boundary component), and the lamination (i.e.\ varying the weight of the spiralling leaves),  the picture of the laminations restricted  to $K$ varies continuously, and hence the grafting map is continuous. 

\begin{figure}
  \centering
  \includegraphics[scale=0.34]{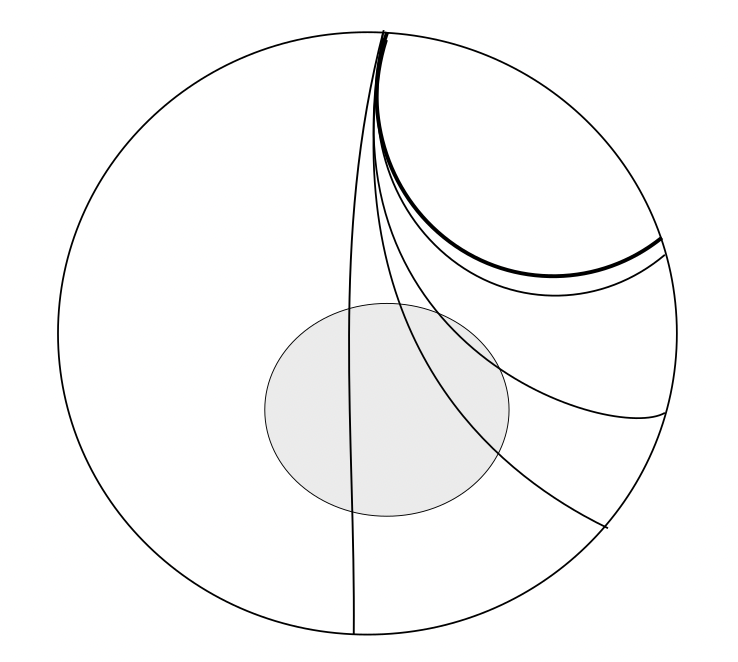}
  \caption{Lifts of a spiralling leaf to the universal cover $\mathbb{H}^2$ accumulates to a lift of the geodesic boundary component. The compact set $K$ (shown shaded) intersects finitely many of such lifts.}
\end{figure}

\vspace{.05in} 

(B) Grafting along a hyperbolic surface with a cusp, with finitely many isolated leaves of the grafting lamination going out of the cusp.  Once again, in the universal cover any compact set $K$ will intersect finitely many of lifts of such leaves. This time, varying the hyperbolic structure includes the possibility of the cusp ``opening out" to a geodesic boundary component (of some small length). It is a consequence of the Collar Lemma that a sequence of hyperbolic surfaces with the length of a geodesic boundary tends to zero converges to a cusped hyperbolic surface in the Gromov-Hausdorff sense (see, for example, Lemma 2.15 of \cite{HarmonicTropical}). In particular, even in this case the restriction of the lamination to a fixed compact subset $K$ will vary continuously, and so will developing map after grafting.

Thus, in both cases, the grafting map is still continuous. \end{proof}



\begin{subsection}{Bijectivity of the Unsigned Grafting Map}
Before we turn to the signed grafting map, we shall first prove that the \textit{unsigned} grafting map defined in \S3.2 is a bijection.

\vspace{.05in} 

First, let us note a Lemma we will use; this is implicitly used in \cite{Gup-Mj} (see the proof of Proposition 4.5 there). In what follows, we refer to Theorem \ref{thm: kulpink} and the brief discussion that follows it. 

\begin{lem}\label{pleating_line}
Let $f : \widetilde{X} \rightarrow \mathbb{CP}^1$ be the developing map of a projective structure on a hyperbolic surface $X$ with a cusp. Let the lift of the cusp to the ideal boundary of $\widetilde{X}$ be denoted $p$. Suppose that there is a point $q\in \mathbb{CP}^1$ such that for $x \in \widetilde{X}$ that continuously varies along a non-constant path in the horocyclic neighbourhood of $p$, there is an embedded disk $U_x \ni x$ in $\overline{\mathbb{H}}$ such that $p \in \partial U_x$, $f(U_x)$ is a round disk in $\mathbb{CP}^1$ tangent to $q$, and $f$ extends continuously over $U_x$ to $p$ such that $f(p)=q$. Then, if the boundary circles of the disks $f(U_x)$ are not tangent to each other at $q$, the pleated plane corresponding to $f$ has an isolated pleating line incident at $q$.
\end{lem}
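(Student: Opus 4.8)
The plan is to work entirely within the Kulkarni--Pinkall picture recalled after Theorem \ref{thm: kulpink}, translating the hypothesis on the round disks $f(U_x)$ into a statement about the support planes of the pleated plane $\Pi \subset \mathbb{H}^3$ determined by $f$, and then to extract a pleating line from the way these support planes rotate about $q$. Recall that for each $x$ the round disk $V_x := f(U_x)$ satisfies $\partial_\infty P_x = \partial V_x$ for a totally geodesic plane $P_x$, and that $P_x$ is a support plane of $\Pi$. Since $q = f(p) \in \partial V_x$ for every $x$ on the path, every such $P_x$ has $q$ in its ideal boundary.

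First I would normalize $q = \infty$ in the upper half-space model $\mathbb{H}^3 = \{(w,t) : w \in \mathbb{C},\ t > 0\}$. Then each $V_x$ is a Euclidean half-plane in $\mathbb{C}$ bounded by a straight line $L_x := \partial V_x$, and $P_x$ is the vertical plane over $L_x$; moreover a pleating line of $\Pi$ incident at $q$ is precisely a vertical geodesic. In these terms the non-tangency hypothesis says exactly that, as $x$ varies continuously over the non-constant path, the directions of the lines $L_x$ sweep out a nondegenerate interval $I$ of directions, i.e.\ the vertical support planes $P_x$ genuinely rotate about $q$.

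Next I would intersect everything with a horosphere $\mathcal{H} = \{t = t_0\}$ based at $q$, with $t_0$ large. Writing $K$ for the convex region bounded by $\Pi$ near $q$, the cross-section $c := \partial(K \cap \mathcal{H})$ is a convex curve in $\mathcal{H} \cong \mathbb{R}^2$, and each $P_x \cap \mathcal{H} = L_x$ is a support line of $c$. The crucial input is Lemma \ref{lem:lam}: only finitely many leaves of the lamination associated to the projective structure enter the cusp at $p$, so (after straightening $\Pi$) only finitely many plaques and pleating lines of $\Pi$ are incident at $q$. Hence near $q$ the curve $c$ is a \emph{convex polygon}, whose edges are the cross-sections of the finitely many vertical plaques and whose vertices are the finitely many vertical pleating lines incident at $q$. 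Now the $L_x$ are support lines of this polygon realizing the whole interval $I$ of support directions, while each edge accounts for only a single such direction; since there are finitely many edges, $I$ must be realized at a vertex. That vertex has positive exterior angle and is therefore an isolated pleating line of $\Pi$ with positive bending weight incident at $q$, as claimed.

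The main obstacle is the third step: making rigorous that the bending of $\Pi$ incident at $q$ is concentrated on finitely many vertical geodesics, i.e.\ that $c$ is genuinely polygonal near $q$. This requires, first, the convexity bookkeeping --- identifying the region $K$ and checking that the $P_x$ are support planes whose horospherical sections are support lines of $c$ --- and, second, the identification under the straightening map between pleating lines of $\Pi$ incident at $q = f(p)$ and leaves of the lamination entering the cusp at $p$, so that Lemma \ref{lem:lam} genuinely applies. Granting these two points, the concluding count --- a continuum of support directions forced onto finitely many polygon edges must accumulate at a vertex --- is elementary.
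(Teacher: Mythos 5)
Your proposal follows essentially the same route as the paper: both translate the non-tangency hypothesis, via the Kulkarni--Pinkall picture, into the statement that the support planes of the pleated plane incident at $q$ do not all coincide, and both then invoke Lemma \ref{lem:lam} to conclude that the resulting positive bending at $q$ must be carried by an isolated pleating line. Your horospherical cross-section / convex-polygon phrasing is a concrete repackaging of what the paper does by a short case analysis (either two of the cores are plaques meeting at a nonzero angle, or there is a continuous family of pleats at $q$; either way the transverse measure incident at $q$ is positive, and finiteness of the leaves entering the cusp forces an atom). The one step that needs repair --- and it is exactly the ``convexity bookkeeping'' you flag as an obstacle --- is the assertion that the plane $P_x$ over $\partial f(U_x)$ is a support plane of $\Pi$: it is not, since $U_x$ is merely an embedded disk with round image, not a maximal one. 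The paper's opening move supplies the fix: each $U_x$ lies in a maximal disk $V_x$ of $\widetilde{X}$; the non-tangency hypothesis forces these maximal disks to be pairwise distinct (two of the $U_x$ contained in a common maximal disk would have images with boundary circles tangent at $q$); and since $p \in \overline{U_x} \subseteq \overline{V_x}$ and $f(p)=q$, the round disk $f(V_x)$ has $q$ on its boundary and contains the half-plane $f(U_x)$, so in your normalization it is a half-plane with boundary parallel to $L_x$. Hence the genuine support planes at $q$ realize the same directions as your $L_x$, and your polygon argument goes through. Note also that the hypothesis does not give continuity of $x \mapsto U_x$, so you should not claim a full interval $I$ of directions; but two distinct support directions of a convex polygon already force a vertex of positive exterior angle, which is all you need.
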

\begin{proof}
Note that each $U_x$ is contained in a maximal disk $V_x$ on $\widetilde{X}$, since its image is a round disk on $\mathbb{CP}^1$. However, we have that the $V_x$ must be distinct, since if $U_x,U_y \subseteq V$, then $f(V)$ is also a round disk in $\mathbb{CP}^1$, containing the round disks $f(U_x),f(U_y)$, but their boundaries must intersect at the common point $q$, contradicting that the boundaries of $f(U_x),f(U_y)$ are not tangent to each other. Hence, we get a family of maximal disks whose images have $q$ as a common boundary point. Now, consider the interior of the convex hull associated to each maximal disk $V_x$, i.e$.$ the convex hull in $\mathbb{H}^3$ of the set $f(\overline{U_x}\cap \partial_{\infty} \widetilde{X}) \subset \partial_\infty \mathbb{H}^3$. Following \S 4 of \cite{Kul-Pin}, since $V_x$ are distinct maximal balls, it follows that $C(f(\overline{U_x}\cap \partial_{\infty} \widetilde{X}))$ are distinct as well. These define the pleated plane corresponding to the projective structure; in the  totally-geodesic copy of $ \mathbb{H}^2$ obtained by a ``straightening" of this pleated plane, each convex hull is either a plaque or a pleat with one boundary point at $q$. Now, if we have at least two among these family of convex hulls which straighten to a plaque, the plaques must have a nonzero angle between them, which implies that we must have some weight of geodesic lamination incident at $q$. Otherwise, we would have a continuous family of convex hulls such that all of them straighten to pleating lines incident on $q$, which again implies that $q$ will have an incident geodesic pleating lamination of nonzero weight. By Lemma ~\ref{lem:lam}, the geodesic lamination must in fact have a pleating line incident on $q$. 
\end{proof}

Next, we proceed to construct an inverse to the (unsigned) grafting map:

\begin{prop}\label{gr_inv}
Given a projective structure $P\in \mathcal{P}(\mathbb{S},\mathbb{M})$, there is a unique hyperbolic surface $X\in \mathcal{T}(\mathbb{S},\mathbb{M})$ and a unique measured lamination $(L,\mu)\in \mathcal{ML}(X)$ such that $P$ is obtained by grafting $X$ along $(L,\mu)$.
\end{prop}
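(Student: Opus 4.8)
The plan is to build the inverse map directly from the Kulkarni--Pinkall correspondence and then verify, end by end, that the output is a legitimate marked hyperbolic surface equipped with a legitimate measured lamination. First I would lift $P$ to its universal cover $\widetilde{X}$, with developing map $f:\widetilde{X}\to\cp$ equivariant under the holonomy $\rho_\mathbb{C}:\pi_1(\mathbb{S}\setminus\mathbb{P})\to\pslc$. Because $P$ carries poles of the prescribed orders and $\chi>0$, the projective surface $\widetilde{X}$ is neither projectively isomorphic to $\mathbb{C}$ nor to the universal cover of $\cp\setminus\{0,\infty\}$, so Theorem \ref{thm: kulpink} applies and yields a unique measured lamination $L$ on $\mathbb{H}^2$, a representation $\rho_\mathbb{R}:\pi_1(\mathbb{S}\setminus\mathbb{P})\to\pslr$ with discrete image $\Gamma$, and the associated equivariant pleated plane. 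Taking $X$ to be the (convex-core) hyperbolic surface $\mathbb{H}^2/\Gamma$, the lamination $L$ descends to a measured lamination $\mu$ on $X$ (with the lifts of the boundary geodesics among its leaves), and by the very construction of the pleated plane, grafting $X$ along $\mu$ returns $P$. What remains is to confirm that $X\in\mathcal{T}(\mathbb{S},\mathbb{M})$ and $\mu\in\mathcal{ML}(X)$; that is, that each end has the prescribed geometric type and that $\mu$ has the correct local structure there.

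Since the end-type and the restriction of $\mu$ near a puncture are detected in an arbitrarily small neighbourhood of it, I would argue separately at each pole. For a boundary component of $\mathbb{S}$ (a pole of order $n_i\geq 3$) this is precisely the analysis of \cite{Gup-Mj}: the pleated plane produces a crown end, into whose tips $\mu$ sends finitely many isolated leaves. The new work is at an interior puncture $p\in\mathbb{P}$, a pole of order at most $2$. Here I would first read off the conjugacy type of the peripheral holonomy $\rho_\mathbb{C}(\gamma_p)$ and match it, via the local developing-map models computed in the proof of Lemma \ref{regular_grafting} and via Corollary \ref{cor:mon}, to the type of $\rho_\mathbb{R}(\gamma_p)$: a non-loxodromic $\rho_\mathbb{C}(\gamma_p)$ (elliptic, parabolic or trivial) corresponds to a parabolic $\rho_\mathbb{R}(\gamma_p)$ and hence a cusp, while a loxodromic $\rho_\mathbb{C}(\gamma_p)$ corresponds to a hyperbolic $\rho_\mathbb{R}(\gamma_p)$ and hence a geodesic boundary of length $|\log\lambda|$.

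In each of these cases the fixed point(s) of the holonomy supply the point $q=f(p)\in\cp$ towards which the nested maximal disks around $p$ accumulate, so that the hypotheses of Lemma \ref{pleating_line} can be checked from the explicit local model; that Lemma then produces an isolated pleating line of $L$ incident at $q$. After straightening, these become the finitely many isolated leaves of $\mu$ entering the cusp, or spiralling onto the geodesic boundary, the spiralling direction being pinned down by the holonomy together with the exponent (whose square is determined by the leading coefficient $a_p$), in accordance with Lemma \ref{lem:regexp}. This shows $\mu\in\mathcal{ML}(X)$ and $X\in\mathcal{T}(\mathbb{S},\mathbb{M})$, completing the existence half. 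Uniqueness then comes for free from Theorem \ref{thm: kulpink}: the lamination $L$ attached to $\widetilde{X}$ is unique, hence so are $\rho_\mathbb{R}$, the surface $X$, and the descended lamination $\mu$; any other pair grafting to $P$ would induce the same equivariant pleated plane and therefore coincide with $(X,\mu)$.

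I expect the main obstacle to lie in this local analysis at the interior punctures, namely in certifying that the Kulkarni--Pinkall pleated plane genuinely closes up into a cusp or a geodesic-boundary end rather than an incomplete or flared end, and that the pleating there is isolated with the predicted weight and spiralling direction. The most delicate sub-cases are the apparent singularities and the order-one pole configuration isolated in Corollary \ref{cor:simp}, where the presence of a logarithmic term in the developing map forces one to verify the non-tangency hypothesis of Lemma \ref{pleating_line} directly from the local expansions.
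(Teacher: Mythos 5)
Your proposal follows essentially the same route as the paper's own proof: invoke Theorem \ref{thm: kulpink} for existence and uniqueness of the pair, reduce to a local verification at each end (citing \cite{Gup-Mj} for the irregular singularities), and at the regular singularities use the explicit local models of the developing map together with Lemma \ref{pleating_line} to produce an isolated pleating line at a fixed point of the peripheral holonomy, thereby matching non-loxodromic holonomy to a cusp and loxodromic holonomy to a geodesic boundary with spiralling leaves. The delicate points you flag (the logarithmic model and the apparent-singularity case) are exactly the ones the paper works through in its case analysis, so the proposal is correct and not a genuinely different argument.
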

\begin{proof}

As a consequence of Theorem \ref{thm: kulpink}, we know that $P$ can be obtained by grafting a unique hyperbolic surface homeomorphic to $S_{g,k}$ along a unique measured lamination on it. Now, to prove the proposition, it suffices to show that the grafting lamination and the grafting ends are as described in \S $2.1.$, i.e. a crown end, a cusp end or a geodesic end, with a finite number of geodesics going into the ends. Hence, this is a completely local argument, and henceforth let us assume that $P$ has no irregular singularities. The case for irregular singularities was established in Proposition $4.5.$ of \cite{Gup-Mj}, and our proof for that case follows from their result.

\vspace{.05in}

 Let the pair of hyperbolic surface and measured lamination be $(X',L)$. Then, it follows that $X'$ is of genus $g$ and has $k$ ends, which are either cusps or flares. In the case of a cusp, basic hyperbolic geometry implies that any leaf of $L$ incident entering a horocyclic neighborhood of the cusp must be isolated and exiting out of the cusp end (\textit{c.f.} Lemma \ref{lem:lam}). So, it suffices to show that if $X'$ has a flaring end bounded by a closed geodesic $\gamma$, then there  is a leaf of $\widetilde{L}$ in the universal cover that is a geodesic line incident to one of the end-points of the lift of $\gamma$. This would imply that the leaves of $L$ incident at that end either spirals and  accumulates onto $\gamma$, or is $\gamma$ itself (with infinite weight). In other words, we need to only rule out the possibility that there is a leaf of $L$ that exits the flaring end ``transverse" to $\gamma$.  

We shall do this in the remainder of the proof; for simplicity we shall assume that $X'$ has only one flaring end and no cusps.

\vspace{.05in}


We shall now use the fact that in a neighbourhood of a regular puncture that is conformally $\mathbb{D}^\ast$, the developing map for the projective structure descends to explicit maps expressed in a local coordinate $w$ on $\mathbb{D}^\ast$, obtained by solving the Schwarzian equation \eqref{schw}. (See \S4.1.1 for a discussion.)  In what follows, the neighborhood of the puncture is $\mathbb{D}^\ast = {H}/\langle z \mapsto z+1\rangle$ where $H = \{ z \vert \ \text{Im}(z) > h_0 \}$ is a horodisk centered at $\infty$.

We have the following cases:

\begin{itemize}
    \item $f(w)=w^{-n}+\textrm{log}(w)$ (on the punctured unit disk) or $f(z)=e^{2\pi i\alpha z}$ (on the universal cover $\mathbb{H}^2$) with $\alpha$ real : In this case, the monodromy around the puncture is identity or elliptic or parabolic. We shall also assume that the asymptotic value the puncture is $0\in \cp$. Now, if $\theta \neq 0$, for each $z$ in the horodisk $H$, there is a neighbourhood of $z$ which maps to a round disk in $\mathbb{CP}^1$, centered at $f(z)$ and tangent to $0 \in \mathbb{CP}^1$. Varying $z$ along a horocycle, the images of the disks have continuously varying tangents at $0$. Hence, by Lemma ~\ref{pleating_line}, we get that there is a pleating line incident at $0$, which belongs to a  $\rho_{\mathbb{C}}$-equivariant family of geodesics. On straightening the plane, we get a $\rho_{\mathbb{R}}$-equivariant family of geodesics on the disk. (Recall here that $\rho_\mathbb{C}$ is the original representation into $\pslc$, and $\rho_\mathbb{R}$ is the representation into $\pslr$ obtained after straightening, \textit{c.f.} Theorem \ref{thm: kulpink}.) Since the elliptic or parabolic monodromy around the puncture fixes the point $0$,  we have that the $\rho_{\mathbb{C}}$-equivariant family of pleating lines is also incident at $0$, hence after straightening as well they will be incident at $0$. Thus, the $\rho_{\mathbb{R}}$-image of the peripheral loop around the puncture  will be a hyperbolic or parabolic element. In case of being a hyperbolic element, the geodesics of the lamination will either approach the geodesic fixed by the hyperbolic element, or it will be that fixed geodesic with infinite weight. In either of these cases, we know from the computations in Lemma 4.2. that the monodromy after grafting will be hyperbolic or loxodromic, hence this case is not possible. Thus, it follows that the cusp monodromy will be parabolic, so we obtain that $X'$ has a cusp end with a geodesic lamination going into it.   

    \vspace{.05in}
    
    \item $f(w)=w^\alpha$ (on the punctured disk) with $\alpha$ not real: In this case, instead of varying $w$ along a  horocycle in the universal cover, we vary it along a path $\{(t,\theta(t)):t\in (0,\epsilon)\}$ in polar coordinates on $\mathbb{D}^\ast$ where $\theta(t)= \frac{\alpha_r}{\alpha_i}\textrm{log}(t) + k$ (here $\alpha_r,\alpha_i$ are the real and imaginary parts of $\alpha$ respectively). Then, note that the image of this path under the developing map will be $(te^{i\theta})^\alpha = e^{(\log{t}+i\theta)(\alpha_r+i\alpha_i)}= e^{(\log(t)\alpha_r-\theta\alpha_i) + i(\theta \alpha_r +\log(t)\alpha_i)} = e^{k\alpha + i \frac{|\alpha|^2}{\alpha_i}\log(t)}$, and as $t\rightarrow 0$, $\log(t) \rightarrow -\infty$. Therefore, the image of the path is clearly a circle on $\mathbb{CP}^1$ separating the fixed points of the monodromy. So, it is again easy to construct neighbourhoods of $z$ which map to round disks in $\mathbb{CP}^1$ passing through $0\in \cp$ satisfying the hypotheses of Lemma ~\ref{pleating_line}. 
    Note that since the monodromy around the puncture is loxodromic, we know that the $\rho_\mathbb{R}$-monodromy is hyperbolic. By Lemma ~\ref{pleating_line} the grafting lamination $L$ has a leaf converging to a fixed point of the loxodromic element that is the monodromy around the puncture.
    
\end{itemize}

Hence in all the cases, it follows that on a neighbourhood of a regular singularity, the projective structure is obtained by grafting along a cusp end with some weight of the geodesic lamination going into the cusp (in the case when the monodromy is parabolic, elliptic, or identity), or by grafting a geodesic boundary component having infinite weight  (in the case when the monodromy is hyperbolic) or grafting along weighted  geodesics spiralling onto a geodesic boundary component (in the case when the monodromy is loxodromic).
\end{proof}

\noindent
\textit{Remark.} Note that in order to apply Lemma ~\ref{pleating_line} in the first case above, we only require $f$ to extend continuously over a disc $U_x$ and not over the whole neighbourhood of the lift of a cusp end. This is an important distinction, as we shall see in \S ~\ref{subsubsec: reg_asymp} when we discuss the asymptotic behavior of  the map $z \mapsto z^{-n} + \textrm{log}(z)$.

\subsection{The Signed Grafting Map and proof of Theorem \ref{thm1} }
We shall now define the signed grafting map
 \begin{equation*}   \widehat{\text{Gr}}:\mathcal{T}^\pm(\sm) \times \mathcal{ML}^\pm(\sm)  \to \mathcal{P}^\pm(\sm)
    \end{equation*}
and then prove that it is a homeomorphism (proving Theorem \ref{thm1}). 

\vspace{.1in} 
First, given an interior marked point $p \in \mathbb{P}$, we define a complex function $c_p$ on $\mathcal{T}^{\pm}\times\mathcal{ML}^{\pm}$ as:
\begin{equation} \label{eq:cp}
c_p (X_\sigma, \mu_\tau) = l +i\alpha
\end{equation} 

where $\alpha$ and $l$ are the signed weight of geodesic lamination and signed length of geodesic boundary at $p$.

Now define the map
\begin{equation} \label{cmap}
c : \mathcal{T}^{\pm}\times\mathcal{ML}^{\pm} \rightarrow \mathbb{C}^m 
\end{equation} 
as \[
(X_\sigma,\mu_\tau) \mapsto (c_{p_1},c_{p_2},...,c_{p_m})
\]
where $p_1,p_2,...,p_m$ are the interior marked points $\mathbb{P}$ and $c_p$ is the complex parameter defined above.

\vspace{.1in}

We already have defined an unsigned grafting map $Gr^\prime : \mathcal{TML} \to \mathcal{P}(\sm)$. Composing with the projection $F: \mathcal{T}^{\pm}\times\mathcal{ML}^{\pm} \to \mathcal{TML}$, we get a map $Gr'\circ F : \mathcal{T}^{\pm}\times\mathcal{ML}^{\pm} \to \mathcal{P}$. This map is not injective: more precisely, suppose $(X,\lambda) \in \mathcal{T}^{\pm}\times\mathcal{ML}^{\pm}$ such that there is a  geodesic boundary component of $X$ with a leaf of $\lambda$ spiralling onto it, then in the signed spaces there is a sign associated with boundary, as well as the leaf, and by our conventions, when both these signs agree, then they spiral in exactly the same way. The map $Gr'\circ F$ would then take both these signed pairs to the same projective structure.  
We need to then argue that in such a case, there is nevertheless a map to the signed space of projective structures $\mathcal{P}^{\pm}$ that is injective. This is most efficiently described using the map $c$ defined above, and the notion of the fiber product (see Definition \ref{def:fiberprod}), as we shall now see. 

\vspace{.1in}

Recall that the space of signed meromorphic structures $\mathcal{P}^{\pm}$ introduced in \S ~\ref{signed_proj_struct} is a fiber product $\mathcal{P}(\mathbb{S},\mathbb{M})\times_{\mathbb{C}^{m}}\mathbb{C}^{m}$ where $\lvert \mathbb{P} \rvert = m$, with respect to the map $r^2:  \mathcal{P}(\mathbb{S},\mathbb{M}) \to \mathbb{C}^m$ and squaring map $ \text{sq}: \mathbb{C}^m \to \mathbb{C}^m$.  Here, recall that $r$ records the exponents (defined up to sign) of the Schwarzian derivative at the punctures in $\mathbb{P}$.

\vspace{.1in} 

By Lemma ~\ref{fiber_product}, in order to define a map from $\mathcal{T}^{\pm}\times\mathcal{ML}^{\pm}$ to $\mathcal{P}^{\pm}$ that ``lifts" the map $Gr^\prime\circ F$, it suffices to define a map to $\mathbb{C}^m$  and check that its composition with $sq$ equals the composition $r^2\circ( Gr'\circ F)$.   This is exactly the map $c$ defined above; indeed, Lemma \ref{lem:regexp} ensures that we have $sq\circ c = r^2 \circ (Gr'\circ F)$. By the universal property of fiber products (\textit{c.f.} Definition \ref{def:fiberprod}), this defines the signed grafting map $\widehat{Gr}$ (as in the statement of Theorem \ref{thm1}). Note that this also shows that $\widehat{Gr}$ is continuous.
 
\vspace{.2in}

We can now formally complete the proof of our main Grafting Theorem:

\begin{proof}[Proof of Theorem \ref{thm1}]
Recall that we have defined the grafting map $\widehat{Gr}$ by first defining the unsigned grafting map  $Gr^\prime$ (see \eqref{eq:unsign}) and then using the universal property of fiber products (see the preceding discussion). By Proposition \ref{gr_inv}, the map $Gr^\prime$ is a bijection. 
We will show that so is $\widehat{\text{Gr}}$, by working locally at the punctures -- since signings are local and  independent parameters at the punctures, it is sufficient to prove the bijection working locally.   At a puncture $p$, given an unsigned projective structure, there are two signed projective structures with different signings at $p$ that descend to the given structure, except when the exponent at $p$ is zero. Similarly, if we look at the pair $(|l_p|,|\alpha_p|)$  at $p$, there are exactly two signed pairs with different signings at $p$ that descend to it -- either $(\pm l_p, \pm \alpha_p)$ or $(\pm l_p, \mp \alpha_p)$ depending on the direction of spiralling of the incident lamination geodesics -- except when $l_p=\alpha_p = 0$, and  indeed, by Lemma \ref{lem:regexp} the exponent at a regular singularity is $0$ if and only if we graft along a cusp with no leaves of the grafting lamination incident on it. Thus, due to the way the signings are defined, we get that $\widehat{Gr}$ is bijective at the level of fibers over the signed spaces, hence $\widehat{Gr}$ is bijective.

 Also, note that the domain of $\widehat{Gr}$ is a real dimensional cell of dimension $2\chi$, by Theorem ~\ref{dim_teichmuller} and Lemma ~\ref{dim_measured}. Thus, by the bijectivity of $\widehat{Gr}$ and the Invariance of Domain, it follows that $\widehat{Gr}$ is a homeomorphism.
\end{proof}

\end{subsection}

\section{The Monodromy Map}

In \S 6 of \cite{All-Bri}, they define the framed monodromy of a signed projective structure without apparent singularities. The first goal of this section is to extend their definition to the collection of \textit{all} signed projective structures, that is, we want to define a map,
\[
\widehat{\Phi} : \mathcal{P}^{\pm}(\mathbb{S},\mathbb{M})\rightarrow \widehat{\chi}(\mathbb{S},\mathbb{M})
\]
such that $\widehat{\Phi}$ agrees with the map defined in \cite{All-Bri} on the set of signed projective structures without apparent singularities. Then, we shall characterize the image of $\widehat{\Phi}$, proving Theorem \ref{thm2} and finally show that it is a local homeomorphism (Theorem \ref{thm3}). 

\subsection{Constructing the map $\widehat{\Phi}$}
Given a signed and marked meromorphic projective structure $P \in \mathcal{P}^\pm(\sm)$, we shall construct a framing by considering the asymptotic values of the developing map at  the lifts of the marked points $\mathbb{M}$. Recall the following classical notion: 

\begin{defn}[Asymptotic value] \label{defn:asymp}
    An asymptotic value of a (meromorphic) function at an ideal point $p$ is a point in $\cp$ that is a limiting value along a path that diverges to $p$.
\end{defn}

At an irregular singularity, there are exactly $(n-2)$ asymptotic values (see \cite[Corollary 4.1]{Gup-Mj_2}); these can be assigned to the marked points on the corresponding boundary of $\mathbb{S}$.  and the corresponding equivariant assignment of lifts at the universal cover defines the framing at the lifts of such marked points.  Thus, it remains to define the framing at regular singularities, namely at the marked points $\mathbb{P} \subset \mathbb{M}$. First, we determine the asymptotic values at these points.

\subsubsection{Asymptotics of the developing map at regular singularities}\label{subsubsec: reg_asymp}
Let $P$ be defined by a meromorphic quadratic differential $q$, and consider a regular singularity of $P$, which recall is an interior puncture in $\mathbb{S}$. 
We endow the neighbourhood of the puncture with the end-extension topology, as defined in \S 3.1. of \cite{Ball}. Now, we study the asymptotics of these developing maps at the puncture, according to the value of the exponent $r$ of the Schwarzian derivative (see Definition \ref{defn:exp}). 
\vspace{.05in} 

The possible developing maps are obtained by studying the Schwarzian equation \eqref{schw} -- see the discussion in \S2.3 of \cite{Gup} and the references therein.  There is also a discussion regarding these asymptotics in Lemma $4.1.1.$ of \cite{Ball}. 
\vspace{.05in} 
The asymptotics are as follows:


\begin{enumerate}
    \item $r =2\pi i \theta \textrm{ for } \theta \in \mathbb{C}\setminus \mathbb{R} :$ The developing map is of the form $y(z)=z^\theta$  - the monodromy around the puncture in this case is given as multiplication with $e^{2i\pi \theta}$, which is either a hyperbolic or loxodromic element. There are two asymptotic values, $0$ and $\infty$. A pair of paths that limit to these asymptotic values are exactly paths spiralling into the cusp in opposite directions. As a consequence, it is not possible to continuously extend the developing map to the puncture.
    \item $r =2\pi i \theta \textrm{ for } \theta \in \mathbb{R}\backslash \mathbb{Z} :$ The developing map is $y(z)=z^\theta$ and the monodromy around the puncture is elliptic, given by multiplication with $e^{2i\pi  \theta}$. It is possible to continuously extend the developing map to the puncture by setting the value at the puncture to be $0$. As a consequence, it has a unique asymptotic value.
    \item $r =2\pi i n \textrm{ for } n \in \mathbb{Z} :$ The developing map is either $y(z)= z^n$ - in which case the monodromy is identity, or $y(z)=z^{-|n|}+\textrm{log}(z)$ - in this case the monodromy is a parabolic element. In the former case, it is possible to extend the map continuously to the puncture by again setting the value at the puncture to zero. In the latter case, it is not possible to extend the developing map continuously to the puncture. However, there is a unique asymptotic value, $\infty$. To see this, we can write the developing map in polar coordinates as $f(se^{i\theta})=s^{-n}e^{-in\theta}+i\theta + \textrm{log}(s)=(s^{-n}\textrm{cos}(n\theta)+\textrm{log}(s))+i(s^{-n}\textrm{sin}(n\theta)+\theta)$. Now, for $s$ sufficiently small, $|s^{-n}|>>|\textrm{log}(s)|$. Hence, taking $\theta$ to be zero and $s\rightarrow 0$, $f(s)\rightarrow \infty$. However, for $s$ sufficiently small, we can also choose $\theta$ to make the real part of the above zero, add an arbitrary multiple of $2\pi/n$ to $\theta$ so that the magnitude of the imaginary part becomes smaller than $2\pi /n$. Thus, $f$ cannot be extended continuously to $\infty$. To see that the asymptotic value of $\infty$ is unique, note that if $(s(t),\theta(t))$ is a curve converging to the puncture,  we have $s(t)\rightarrow 0$ as $t\rightarrow \infty$. Now if $f(s(t),\theta(t))$ approaches a bounded asymptotic value $\alpha+i\beta$, then clearly $\theta(t)\neq 0$ for $t$ sufficiently large, since otherwise, the real part of $f$, i.e. $s(t)^{-n}+\textrm{log}(s(t))$ would become arbitrarily large. So, for $t$ sufficiently large, $2\pi k/n <\theta(t) < 2\pi (k+1)/n$ for an integer $k$, and moreover $\textrm{cos}(\theta(t))\rightarrow0$. However, then the imaginary part of $f$ will be unbounded, a contradiction.
\end{enumerate}

\noindent \textit{Remark.} Note that in each case the asymptotic values of the developing map determined above are also fixed points of the monodromy around the puncture. Moreover, in the case that the monodromy is loxodromic or parabolic, the fixed points are precisely the asymptotic values - see cases (1) and (3) above. However, when the monodromy is elliptic as in case (2), there would be two fixed points, but the asymptotic value is only one of them. A key advantage with using asymptotic values is that it is also uniquely defined at an apparent singularity, when the peripheral monodromy is the identity element.  

\subsubsection{Defining the framing}

We define the framing at a puncture in $\mathbb{P}$ as follows:

\begin{enumerate}
    \item If the monodromy is identity or parabolic, we define the unique asymptotic value of the developing map as the framing. These asymptotic values are computed in \S4.1.1 for the model developing maps described there, which assume that the puncture is at $0\in\cp$. More generally, the developing map differs from the model map by post-composing with an element of $\pslc$, and the asymptotic values differ accordingly. 
    \item If the monodromy is elliptic or loxodromic, we define the framing by considering the sign of the projective structure at the puncture, exactly as in \cite[\S6]{All-Bri}. (There is such a sign since the exponent around such a puncture is necessarily non-zero; indeed, if $r=0$ then the peripheral monodromy is either identity or parabolic.)  Namely, we assign the framing to be one of the two fixed points of the monodromy around the puncture according to the sign. To be precise, since the fixed points in $\cp$ are determined by the eigenlines corresponding to the two eigenvalues $e^{\pm\lambda}$, we choose the eigenvalue with the same sign in the exponent. By the preceding remark, in the case that the peripheral monodromy is loxodromic, these fixed points are exactly the asymptotic values of the developing map at the puncture. However, in the case where the peripheral monodromy is elliptic, the framing is not necessarily equal to the unique asymptotic value.
\end{enumerate}

Clearly, the association of points in $\mathbb{CP}^1$ with the punctures prescribed gives a well-defined framing $\beta$. Together with the usual monodromy representation $\rho:\pi_1(\mathbb{S} \setminus \mathbb{P}) \to \pslc$, we obtain a framed representation $\hat{\rho} = (\rho,\beta) \in \widehat{\bigchi}(\sm)$.  Recall that we had started with a signed projective structure $P$ at the beginning of the section; the assignment $\widehat{\Phi}(P) = \hat{\rho}$ thus defines the framed monodromy map $\widehat{\Phi}$.  By the preceding remark, our definition agrees with that of Allegretti-Bridgeland in the case where there are no apparent singularities. 

\subsection{Non-degenerate framed representations and flips}


First, we recall the definition of a non-degenerate framed representation
(see \S 4.2 of \cite{All-Bri}, Definition 2.6 of \cite{Gup-Mj} and Definition 2.4 of \cite{Gup}).

\begin{defn}\label{def:deg}
    A framed representation $\hat{\rho} = (\rho, \beta)$ is \textit{degenerate} if one of the following holds:
\begin{enumerate}
    \item The image of $\beta$ is a single point $\{p\}$ and the monodromy around each puncture is parabolic with fixed point $p$ or the identity.

    \item The image of $\beta$ has two points $\{p,q\}$ and the monodromy around each puncture  fixes both $p$ and $q$.

    \item If $\tilde{p_1},\tilde{p_2} \in F_\infty$ are lifts of  $p_1,p_2 \in \mathbb{M}$ that are successive points on the same boundary component of $\mathbb{S}$, then $\beta(\tilde{p_1}) \neq \beta(\tilde{p_2})$.
    
\end{enumerate}

    The framed representation is said to be non-degenerate if it is not degenerate. 
\end{defn}

\textit{Remark.} The above notion is related to, but distinct from, the notion of a \textit{non-degenerate representation}, which was introduced in \cite{Gup} while dealing with the case when  the marked and bordered surface had no boundary components (i.e.\ $\mathbb{M} = \mathbb{P}$). In that case, condition (3) of Definition \ref{def:deg} holds vacuously, and a framed representation is non-degenerate if the underlying representation is non-degenerate (see \cite[Proposition 3.1]{Gup}). However, the converse is not true in the presence of apparent singularities.  As an example, consider the once-punctured torus, and a representation of its fundamental group given by $\alpha \mapsto \big(\begin{smallmatrix} 1 & 1\\
  0 & 1 \end{smallmatrix}\big), \beta \mapsto \big(\begin{smallmatrix} 1 & -1\\ 0 & 1 \end{smallmatrix}\big)$, where $\alpha,\beta$ are the two generators of the fundamental group. Then, since both $\rho(\alpha),\rho(\beta)$ fix the point $\infty$, and the monodromy around the puncture $[\rho(\alpha),\rho(\beta)] = \big(\begin{smallmatrix} 1 & 0\\
  0 & 1 \end{smallmatrix}\big)$ is the identity element, $\rho$ is a degenerate representation. However, we can frame this representation, by sending the one lift of the puncture to the point $1 \in \mathbb{C}$ and then extending equivariantly. The image of the Farey set under the framing is then the set of integers $\mathbb{Z}\subset \mathbb{C}$, and the framing is non-degenerate by the definition above. 

\vspace{0.1in}

The image under the framing map $\beta$ of  a lift of a puncture is a fixed point for the peripheral monodromy around the puncture. If the monodromy around a puncture is elliptic or loxodromic, we can get a new framing by equivariantly choosing the other fixed point. We define a \textit{flip} of a framing to be a change of framing at a subset of the interior punctures $\mathbb{P}$ of $\mathbb{S}$, which have peripheral monodromy that is either elliptic or loxodromic, by choosing of the other fixed point of that monodromy element. We note the following Lemma, which is in essence Remark 4.4 (v) of \cite{All-Bri} (see also Lemma 9.4 of that paper):

\begin{lem} \label{flip_non-degenerate}
    A framed representation is non-degenerate if and only if all of its flips are non-degenerate.
\end{lem}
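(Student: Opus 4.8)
The plan is to show that \emph{degeneracy is invariant under flips}, from which the lemma follows at once. Observe first that $\hat\rho$ is itself a flip of $\hat\rho$ (take the empty subset of punctures), so the implication ``all flips non-degenerate $\Rightarrow \hat\rho$ non-degenerate'' is immediate and only the reverse direction carries content. The flips of $\hat\rho$ are parametrized by subsets of the set $S\subseteq \mathbb{P}$ of interior punctures whose peripheral monodromy is elliptic or loxodromic, and composing flips corresponds to taking symmetric differences; thus the flips form a group $(\mathbb{Z}/2)^{S}$ acting on framings, and ``being a flip of'' is an equivalence relation. It therefore suffices to prove that a \emph{single}-puncture flip preserves the degeneracy status, and then compose. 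The structural fact I would use throughout is that a flip changes only the framing $\beta$, never the representation $\rho$, and it changes $\beta$ only at the lifts of one flippable puncture $\pi_0$, replacing the chosen framing by the \emph{other} fixed point of the (elliptic or loxodromic) element $\rho(\delta_{\pi_0})$.

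Next I would dispose of condition (3) of Definition \ref{def:deg}: it refers only to the framings of successive marked points on a boundary component of $\mathbb{S}$, that is, to points of $\mathbb{M}\setminus\mathbb{P}$. Since a flip alters $\beta$ only at the interior punctures $\mathbb{P}$, the values $\beta(\tilde p_1),\beta(\tilde p_2)$ at boundary marked points are untouched, so condition (3) has exactly the same truth value for $\hat\rho$ and for any flip.

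I would then treat conditions (1) and (2) together. If condition (1) holds, every peripheral monodromy is parabolic or the identity, so $S=\varnothing$ and the only flip is the trivial one, leaving nothing to check. If condition (2) holds, with a common pair $\{p,q\}\subset\cp$ fixed by every peripheral monodromy, then each flippable $\pi_0\in S$ has monodromy that is elliptic or loxodromic \emph{and} fixes both $p$ and $q$; as such an element has exactly two fixed points, its fixed-point set is precisely $\{p,q\}$. Hence the flip at $\pi_0$ merely exchanges the framing between $p$ and $q$: the image of $\beta$ stays inside $\{p,q\}$ and every peripheral monodromy still fixes both, so condition (2) persists. The reverse implications follow because the inverse of a flip is again a flip of the same type, so the argument is symmetric.

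The step I expect to be the main obstacle is precisely this last one, condition (2), because a flip can change the number of \emph{distinct} values taken by $\beta$ -- for example it can merge a two-valued framing into a single-valued one -- which a priori threatens to move the framed representation between the list of degeneracy conditions (in particular between (1) and (2)). The way I would resolve this is to recognize that the genuine invariant underlying (1)--(2) is the statement ``the image of $\beta$ is contained in a $\rho$-invariant subset of $\cp$ of at most two points, with the prescribed local-monodromy constraint'', a property that does not depend on \emph{which} of the (at most two) common fixed points is selected at each puncture. Phrased this way the invariance under the $(\mathbb{Z}/2)^{S}$-action is manifest, and together with the boundary-only condition (3) it shows that degeneracy is a flip-orbit invariant, completing the proof.
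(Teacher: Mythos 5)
The paper gives no proof of this lemma at all --- it simply quotes it from Remark 4.4(v) and Lemma 9.4 of \cite{All-Bri} --- so your self-contained verification from Definition \ref{def:deg} is necessarily a different route, and its architecture is the natural one: flips compose by symmetric difference and form a group $(\mathbb{Z}/2)^{S}$, so it suffices to check that a single flip preserves degeneracy; condition (3) involves only boundary marked points and is untouched; condition (1) forces $S=\varnothing$; and under condition (2) a flip at $\pi_0\in S$ merely exchanges the two fixed points $p,q$ of the elliptic or loxodromic element $\rho(\delta_{\pi_0})$.

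However, the step you flag as the main obstacle is a genuine gap, not a presentational one. Take the thrice-punctured sphere, $\rho$ with image in the diagonal subgroup so that every peripheral monodromy is loxodromic with fixed-point set $\{0,\infty\}$, and $\beta$ framing one puncture by $0$ and the other two by $\infty$. This $(\rho,\beta)$ is degenerate by clause (2). Flipping at the puncture framed by $0$ collapses the image of the framing to the single point $\infty$; the resulting framed representation satisfies neither clause (1) (which demands that every peripheral monodromy be parabolic or the identity, whereas here all are loxodromic) nor clause (2) (which demands two points in the image), so read literally it is \emph{non}-degenerate and the lemma fails. Your resolution --- declaring that the ``genuine invariant underlying (1)--(2)'' is containment of the image of $\beta$ in a two-point $\rho$-invariant set --- replaces the stated clauses by a strictly weaker condition and is an assertion, not a deduction: a loxodromic puncture with a one-point framing image satisfies your containment condition but neither clause as written. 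The repair is either to adopt the ``containment'' form of the definition (the image of $\beta$ lies in a set of at most two points, each fixed by every peripheral monodromy, with the parabolic-or-identity proviso applying only when the image is a single point \emph{and} no puncture has two fixed points), which is the reading forced by consistency with Lemma \ref{non-degeneracy}, or to derive the lemma directly from the characterization in Lemma \ref{non-degeneracy}(3), which is manifestly flip-invariant because a flip of a flip is again a flip. Without one of these, the clause-(2) case of your argument does not close.
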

    

We note another useful Lemma about framed representations, which is essentially taken from \S 9 of \cite{All-Bri}:
\begin{lem}\label{non-degeneracy}
    Given a framed representation $\hat{\rho} = (\rho, \beta)$, the following are equivalent:
    \begin{enumerate}
        \item The framed representation is non-degenerate.
        \item We can flip the framing so that there exists an ideal triangulation such that the Fock-Goncharov coordinates associated to the triangulation are well-defined.
        \item We can flip the framing so that the image of the framing has at least three points in its image and there is no boundary component with two adjacent marked points having the same framing.
    \end{enumerate}
\end{lem}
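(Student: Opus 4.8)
The plan is to establish the cycle of implications $(1) \Rightarrow (2) \Rightarrow (3) \Rightarrow (1)$, using as a common tool a description of when the Fock--Goncharov coordinates are well-defined. Given an ideal triangulation $T$, the coordinate attached to an edge $e$ is the cross-ratio of the four framing points $\beta(\tilde{v}_1), \beta(\tilde{v}_2), \beta(\tilde{v}_3), \beta(\tilde{v}_4)$ assigned, via a lift to $F_\infty$, to the vertices of the quadrilateral formed by the two triangles adjacent to $e$, where $\tilde{v}_1, \tilde{v}_3$ are the endpoints of $e$. The explicit cross-ratio formula shows that this coordinate lies in $\mathbb{C}^\ast$ precisely when the four \emph{sides} of the quadrilateral have distinct framings at their endpoints; since every edge of $T$ occurs as a side of some such quadrilateral, the coordinates on $T$ are all well-defined if and only if every edge of $T$ joins vertices with distinct framings. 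By $\rho$-equivariance of $\beta$ this condition is independent of the chosen lift. I would record this equivalence first, as it drives both of the easy directions.

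For $(2) \Rightarrow (3)$, suppose we have flipped the framing so that the coordinates are well-defined on some $T$. Then every edge of $T$ is \emph{good}, i.e.\ joins vertices of distinct framing. In particular the three sides of any single triangle of $T$ are good, so its three vertices carry three pairwise-distinct framing points; hence the image of $\beta$ has at least three points. Moreover the boundary arcs between successive marked points are edges of $T$, so their being good says exactly that no boundary component has two adjacent marked points with the same framing. This is condition $(3)$.

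For $(3) \Rightarrow (1)$, let $\beta'$ be the flip furnished by $(3)$. Since its image has at least three points, $(\rho, \beta')$ cannot be degenerate through the image being a single point or exactly two points, ruling out the first two cases of Definition~\ref{def:deg}; and since no boundary component has two adjacent marked points of equal framing, the third case fails as well. Thus $(\rho, \beta')$ is non-degenerate, and by Lemma~\ref{flip_non-degenerate} the original pair $(\rho, \beta)$ is non-degenerate too.

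The substantial implication is $(1) \Rightarrow (2)$, which is where the real work lies. By Lemma~\ref{flip_non-degenerate} every flip of a non-degenerate representation is again non-degenerate, so it suffices to produce, after an appropriate flip, an ideal triangulation all of whose edges are good. This is precisely the content of the construction in \S9 of \cite{All-Bri} (see in particular \cite[Theorem 9.1]{All-Bri}, together with \cite[Lemma 9.4]{All-Bri} and \cite[Remark 4.4(v)]{All-Bri}): beginning from an arbitrary triangulation, one eliminates the bad edges joining vertices of equal framing by triangulation-mutations and by flipping the framing at the relevant elliptic or loxodromic punctures, with the non-degeneracy hypothesis ensuring that this process terminates at a triangulation whose edges are all good. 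I expect this existence of a good triangulation to be the main obstacle, and I would import it directly from \cite{All-Bri} rather than reprove it; this closes the cycle and establishes the lemma.
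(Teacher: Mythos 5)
Your proposal is correct and follows essentially the same route as the paper: the same cycle $(1)\Rightarrow(2)\Rightarrow(3)\Rightarrow(1)$, with the hard direction $(1)\Rightarrow(2)$ imported from \cite[Theorem 9.1]{All-Bri}, $(2)\Rightarrow(3)$ deduced from the fact that well-defined Fock--Goncharov coordinates force adjacent vertices of the triangulation to carry distinct framings, and $(3)\Rightarrow(1)$ read off from Definition~\ref{def:deg} together with Lemma~\ref{flip_non-degenerate}. Your cross-ratio discussion is more detailed than the paper's one-line justification, but the content is the same.
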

\begin{proof}
    $(1)\implies(2)$ is a consequence of Theorem $9.1.$ of \cite{All-Bri}. $(2)\implies (3)$ follows since a ideal triangulation which gives well-defined Fock-Goncharov coordinates must have the property that the set of ideal vertices of a pair of adjacent ideal triangles has at least three distinct points. $(3)\implies (1)$ follows from the Definition \ref{def:deg} of non-degeneracy. 
\end{proof}

\noindent
\textit{Remark.} It follows from the definition of the framing in the previous subsection that changing signs of a signed projective structure at a subset of the punctures  in $\mathbb{P}$ precisely results in the flipping the framing of its framed monodromy at those punctures.

\subsection{Characterizing the image of $\widehat{\Phi}$}

We shall now prove Theorem \ref{thm2} which characterizes the image of the monodromy map $\widehat{\Phi}$. We begin with the following observation:

\begin{lem}\label{lem:immT}
    Let $Z$ be a hyperbolic structure on a connected marked and bordered surface with a nonempty set of marked points $\mathbb{M}$, such that it has at least one geodesic boundary component in the case that $\lvert \mathbb{M} \rvert =1$. Then there exists an immersed ideal triangle $T$ in $Z$ with vertices in $\mathbb{M}$ that lifts to an embedded ideal triangle $\widetilde{T}$ in the universal cover of $Z$. Moreover, if $\mathcal{C}$ is a finite collection of isolated geodesic lines in $Z$ with endpoints in $\mathbb{M}$, then we can choose $T$ such that the interior of $T$ is disjoint from each element of $\mathcal{C}$. 
\end{lem}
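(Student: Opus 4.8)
The plan is to work in the universal cover $\widetilde{Z}\subseteq \mathbb{H}^2$ and to reduce both assertions to the existence of a suitable $\pi_1(Z)$-invariant ideal triangulation, precisely of the kind produced by the triangulation Lemma of \S2.2 (the one asserting that a geodesic lamination admits a compatible $\pi_1$-invariant ideal triangulation of the universal cover, with no leaf meeting a triangle interior). The key observation is that a \emph{single} triangle of such a triangulation already furnishes everything we need: its lift is the embedded ideal triangle $\widetilde{T}$, its projection to $Z$ is an immersed ideal triangle $T$, and the disjointness from $\mathcal{C}$ is built into the triangulation. Recall here that $\widetilde{Z}$ is a convex subset of $\mathbb{H}^2$ whose ideal boundary contains the Farey set $F_\infty$ of lifts of $\mathbb{M}$, together with the ideal endpoints of the lifts of the geodesic boundary of $Z$.

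For the "moreover" statement I would set $L := \partial Z \cup \bigcup \mathcal{C}$. Since the elements of $\mathcal{C}$ are isolated geodesic lines with endpoints in $\mathbb{M}$, hence pairwise disjoint complete geodesics, their union together with the (always-included) boundary geodesics is a geodesic lamination on $Z$. Applying the triangulation Lemma to $L$ yields a $\pi_1(Z)$-invariant ideal triangulation $\widetilde{\Delta}$ of $\widetilde{Z}$, no triangle of which has interior meeting a leaf of $L$, and in particular meeting any lift of $\mathcal{C}$. As recalled in the proof of that Lemma, the edges of $\widetilde{\Delta}$ run between lifts of cusps, crown tips and fixed points of geodesic-boundary holonomies, so every ideal vertex of every triangle lies in $F_\infty$ and hence projects to a point of $\mathbb{M}$. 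I would then select any triangle $\widetilde{T}$ of $\widetilde{\Delta}$: as an ideal geodesic triangle with three distinct ideal vertices it is embedded in $\mathbb{H}^2$, and since the covering projection $\pi:\widetilde{Z}\to Z$ is a local diffeomorphism, $T := \pi(\widetilde{T})$ is an immersed ideal triangle with vertices in $\mathbb{M}$. Because $\widetilde{T}^{\circ}$ is disjoint from $\pi^{-1}(\mathcal{C})$, its image $T^{\circ}$ is disjoint from $\mathcal{C}$. Taking $\mathcal{C}=\emptyset$, so that $L=\partial Z$, recovers the first, unqualified existence statement.

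The step that genuinely requires the hypotheses is guaranteeing that the chosen triangle is \emph{nondegenerate}, that is, that $\widetilde{\Delta}$ really contains a triangle with three distinct ideal vertices spanning a two-dimensional region whose lift is embedded; this is where the assumption that $Z$ has a geodesic boundary component when $\lvert \mathbb{M}\rvert = 1$ is invoked. When $\lvert \mathbb{M}\rvert \geq 2$ the group $\pi_1(Z)$ is non-elementary and $F_\infty$ is infinite, so distinct lifts of the marked points always supply three distinct ideal endpoints; when $\lvert \mathbb{M}\rvert = 1$ the geodesic-boundary hypothesis supplies the two distinct fixed points of the boundary holonomy in $F_\infty$, and I would check directly that the edges spiralling onto that boundary contribute the additional distinct ideal endpoints needed to form a genuine triangle. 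I expect this non-degeneracy verification in the low-complexity case $\lvert \mathbb{M}\rvert = 1$, rather than the main construction, to be the only delicate point; the remainder is a direct consequence of the triangulation Lemma together with the fact that $\pi$ is a local diffeomorphism.
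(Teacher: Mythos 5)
Your reduction to the triangulation lemma of \S 2.2 is appealing, but it has a genuine gap at exactly the point the lemma is designed to handle. The hypothesis ``at least one geodesic boundary component when $\lvert\mathbb{M}\rvert=1$'' (and the way the lemma is invoked in the proof of Theorem \ref{thm2}, where $Z$ is a component of the complement of $\lambda'$) shows that $Z$ is allowed to have ends that do \emph{not} carry marked points: lamination-ends that are unmarked geodesic boundary circles or unmarked crowns. For such a $Z$, the ideal triangulation produced by the triangulation lemma has edges terminating at \emph{all} the ends --- including spikes of unmarked crowns and fixed points of the holonomy of unmarked boundary geodesics --- so its triangles need not have any vertex, let alone all three, at a lift of $\mathbb{M}$. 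Your assertion that ``every ideal vertex of every triangle lies in $F_\infty$'' therefore fails, and ``select any triangle'' can return a triangle with no vertices in $\mathbb{M}$ at all. (If instead one reads the statement so literally that every end of $Z$ is marked, then your argument goes through, but the $\lvert\mathbb{M}\rvert=1$ hypothesis becomes vacuous --- a one-ended hyperbolic surface always admits a one-vertex ideal triangulation --- which signals that this is not the intended reading.) Your proposed fix for $\lvert\mathbb{M}\rvert=1$, namely using the ideal endpoints of edges spiralling onto the boundary geodesic as the extra vertices, produces vertices that are holonomy fixed points of an unmarked boundary, not lifts of the marked point, so it does not repair the argument.

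What is actually needed, and what the paper does, is a direct construction of a single triangle with all three ideal vertices over $\mathbb{M}$, split by cases. For $\lvert\mathbb{M}\rvert\geq 3$ one joins three marked points by disjoint arcs and takes geodesic representatives, choosing each side to be either an element of $\mathcal{C}$ or disjoint from all of $\mathcal{C}$; for $\lvert\mathbb{M}\rvert=2$ one takes a degenerate immersed triangle with two sides equal to the geodesic between the two points; and for $\lvert\mathbb{M}\rvert=1$ one uses the hyperbolic element coming from the geodesic boundary component to produce infinitely many distinct lifts of the single marked point $p_0$ in $\partial_\infty\widetilde{Z}$, joins three of them by geodesics (again chosen in or disjoint from $\mathcal{C}$), and uses convexity of $\widetilde{Z}$ to see the resulting ideal triangle is embedded upstairs. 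Your idea of absorbing $\mathcal{C}$ into a lamination to get the disjointness of the interior is sound and is essentially the same device the paper uses for the ``moreover'' clause, but the existence of a triangle with the correct vertices has to be arranged by hand rather than extracted from an arbitrary compatible triangulation.
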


\begin{proof}

When $\lvert \mathbb{M}\rvert \geq 3$, there exists an ideal triangle $T$  embedded in $Z$ with ideal vertices at three distinct points in $\mathbb{M}$: connect each pair of such points by arcs such that the arcs are pairwise disjoint, and then take their geodesic representatives. For the second statement, note that we can choose the geodesic sides of $T$ to be either from $\mathcal{C}$, or disjoint from each geodesic line in  $\mathcal{C}$; this would imply in particular  that the interior of $T$  does not intersect any geodesic line in $\mathcal{C}$.

In the case when $\mathbb{M}$ has exactly \textit{two} points (say $p,q$), we consider the ideal triangle $T$ where two of the geodesic sides coincide and is exactly the geodesic between $p$ and $q$, and the third geodesic side is the geodesic arc from one of the points ($p$ or $q$) to itself that goes around the other point.   In the universal cover, such an ideal triangle will lift to an embedded ideal triangle. For the second statement, note that each geodesic in $\mathcal{C}$ starts and ends in the set $\{p,q\}$, and hence we can ensure that the two geodesics we chose to be sides of $T$, either coincide with elements of $\mathcal{C}$ or are disjoint from them.

\begin{figure}[h]
  \centering
  \includegraphics[scale=0.34]{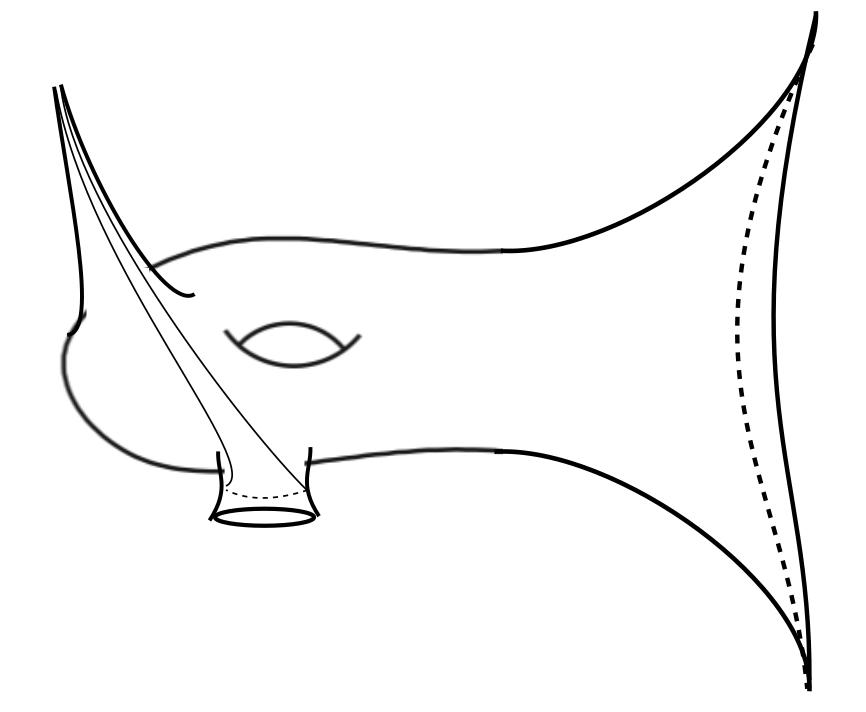}
  \caption{A hyperbolic surface $Z$ as in Lemma \ref{lem:immT}. One can choose three arcs from the marked point $p_0$ (at the cusp end) to itself that wind around the geodesic boundary component and bound an immersed ideal triangle.  }
\end{figure}

Finally, assume that $\mathbb{M}$ has a single point $p_0$.  Recall that in this case we also assume that the hyperbolic surface $Z$ has a geodesic boundary component $c$. (See Figure 5.)  Identify the universal cover $\widetilde{Z}$ as the geodesically convex subset of $\mathbb{H}^2$. The ideal boundary points of $\widetilde{Z}$ will include infinitely many lifts of $p_0$;  this uses the fact that $Z$ has a non-trivial closed geodesic $c$ - lifts of arcs from $p_0$ to itself that twist around $c$ a different number of times will lift to arcs between different lifts of $p_0$. Choose three such lifts of $p_0$, and connect them with geodesic lines; by convexity of $\widetilde{Z}$ this defines an embedded ideal triangle in the universal cover. Once again, we choose the three geodesic lines above to be either from the set $\mathcal{C}$ or disjoint from its elements. The resulting ideal triangle will have its interior disjoint from $\mathcal{C}$. \end{proof}

We shall use the above lemma in the proof of Theorem \ref{thm2}, which recall, states the image of $\widehat{\Phi}$ is precisely the set of non-degenerate framed representations.

\begin{proof}[Proof of Theorem \ref{thm2}]

The proof of one inclusion follows from the constructions in  \cite{Faraco-Gupta} and \cite{Gup-Mj_2}, so we refer the reader to those papers for details. Briefly, let $(\rho,\beta)$ be a given non-degenerate framed representation in $\widehat{\chi}(\mathbb{S},\mathbb{M})$. Then, by Lemma ~\ref{non-degeneracy}, we can flip the framing to $(\rho,\beta')$ and construct a triangulation of the universal cover of the surface $(\mathbb{S}\backslash \mathbb{M})$, such that the Fock-Goncharov co-ordinates for this triangulation are well-defined. Then, we can construct the pleated plane and projective structure from this collection of Fock-Goncharov coordinates (as described in \cite[\S3]{Gup}, \cite[\S3]{Faraco-Gupta}, and \cite[\S3]{Gup-Mj_2}) to obtain a signed projective structure with framed monodromy $(\rho,\beta')$. Finally, we can change signing at punctures in $\mathbb{P}$ to recover the framed monodromy $(\rho,\beta)$, since as noted in \S4.2, changing signs is identical to a flip of the framing. So, in what follows we shall focus on proving the other inclusion. 

\vspace{.05in} 

Let $(P,\mu)$ be a signed meromorphic projective structure in $\mathcal{P}^\pm$, where $\mu$ denotes the choice of a signing on the subset of $\mathbb{P}$ having non-zero exponents. We know by  Proposition ~\ref{gr_inv}, that the underlying unsigned projective structure $P$ is obtained by grafting a pair $(X,\lambda)$, where $X$ is a hyperbolic structure on $(\mathbb{S}, \mathbb{M})$ and $\lambda$ is a measured geodesic lamination. We shall show that the developing image of $P$ has three distinct asymptotic values at the lifts of its marked points at $F_\infty$. Since such an asymptotic value is a fixed point of the monodromy around that puncture, it follows that for some signing $\mu'$, the  corresponding framing will have at least three distinct points in its image, and hence the framed representation of the signed projective structure $(P,\mu')$ is non-degenerate. Since $\mu$ is obtained by changing the sign of $\mu'$, the framed monodromy of $(P,\mu)$ is non-degenerate as well by Lemma ~\ref{non-degeneracy}. Note that the fact that there is no boundary component with adjacent marked points having the same image under the framing follows from the asymptotics of the developing map at irregular singularities, and is discussed in \cite{Gup-Mj}, where it is attributed to \cite[Chapter 8]{Sibuya}. 

\vspace{.05in} 

 The easiest case is when there is an embedded ideal triangle $T$ in $X \setminus \lambda$ whose boundary edges are bi-infinite leaves of the lamination $\lambda$ with vertices in $\mathbb{M}$. This lifts to an ideal triangle $\widetilde{T}$ in the universal cover, with ideal vertices in $F_\infty$. Before grafting the developing images of these three points are distinct; indeed, the developing image of $\widetilde{T}$ embeds in $\cp$. Since the interior of $\widetilde{T}$ is disjoint from the lift $\tilde{\lambda}$ of the bending lamination, its image under the developing map remains embedded after grafting. In particular, the images of the three ideal vertices will be distinct, and are asymptotic values of the developing map of the projective structure $P$. Thus, we conclude that the image of $\beta$ has at least three points, and we are done. 

 Note that the argument works when there is an embedded ideal triangle in the \textit{universal cover} with vertices in $F_\infty$ and that is disjoint from the lift of the bending lamination $\lambda$.

\vspace{.05in} 

An observation that will be useful in what follows is:

\vspace{.05in} 

\textit{Claim.} \textit{An isolated leaf of $\lambda$ is either a simple closed geodesic, or a bi-infinite geodesic between two points in $\mathbb{M}$. (The latter possibility includes the case when both ends of the geodesic spirals onto geodesic boundary components.)} 

\textit{Proof of claim.} This is essentially Corollary 1.7.4 of \cite{Pen} and follows from the structure theory of measured geodesic laminations (see, for example, \cite[Corollary 1.7.3]{Pen}): the only other possibility is to have a leaf spiralling at either (or both) ends onto a compactly supported geodesic lamination, but that is not possible since the finite measure of the accumulating leaf will endow that lamination with infinite transverse measure. $\qed$ 

\vspace{.05in}

\begin{figure}
  \centering
  \includegraphics[scale=0.6]{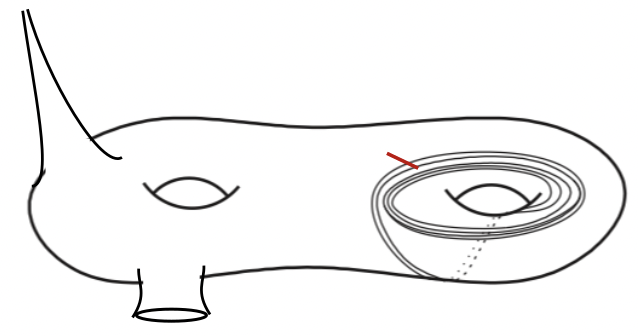}
  \caption{A possible hyperbolic surface $X$ and geodesic lamination $\lambda^\prime$: the metric completion of $X \setminus \lambda^\prime$ is shown in Figure 5.}
\end{figure}

Let $\lambda^\prime \subset \lambda$ be the subset of the lamination supported in a compact part of the surface (i.e.\ away from the ends including the geodesic boundaries). (See Figure 6.) Note that by the claim above, the leaves of $\lambda \setminus \lambda^\prime$ are isolated bi-infinite geodesics between points of $\mathbb{M}$. Let $Y = \widehat{X \setminus \lambda^\prime}$ be the metric completion of the complement of $\lambda^\prime$. (See Figure 5.)  Note that $Y$ is a hyperbolic surface that contains  ends of $X$ corresponding to the points of $\mathbb{M}$, together with some additional ends that are adjacent to $\lambda^\prime$ on $X$, which we shall refer to as the ``lamination-ends". Note that the lamination-ends are either geodesic boundary components or crowns.  

\vspace{.05in}

Let $Z$ be a connected component of $Y$ containing a non-empty set of points from $\mathbb{M}$, and let $\mathcal{C}$ be the collection of isolated leaves of the grafting lamination with endpoints in $\mathbb{M}$ contained in $Z$. We now apply Lemma \ref{lem:immT} to obtain an embedded ideal triangle in the universal cover of $X$, with ideal vertices in $F_\infty$, completely contained in the subset that is the lift of $Z$ (and hence lying in the complement of the lift of $\lambda$). As observed above, the existence of such an embedded ideal triangle suffices to complete the proof.

\vspace{.05in}

The only case where Lemma \ref{lem:immT} will not apply is if $Z$ contains a single point of $\mathbb{M}$, and $Z$ has no geodesic boundary components. In this case, there is a lamination-end of $Z$ which is a crown. Recall that a lamination-end is adjacent to the compactly-supported lamination $\lambda^\prime$ on $X$; the geodesic sides of the crown cannot be isolated leaves of the lamination $\lambda^\prime$, from the claim above. Hence, there are infinitely many (in fact uncountably many) leaves of $\lambda^\prime$ that are accumulating onto any such geodesic side.  (Indeed, from the structure theory of geodesic laminations an arc transverse to $\lambda^\prime$ there will intersect it in a Cantor set.)   In this case consider a homotopically non-trivial arc $\gamma$ from $p_0$ to itself that intersects $\lambda^\prime$, but such that the transverse measure is small. Such a $\gamma$ can be described thus: it starts from $p_0$,  crosses the geodesic side of the lamination-end mentioned above, reaches one of the ``gaps" in the Cantor-set cross-section that also belongs to $Z$, and subsequently remains in $Z$ and returns to $p_0$. In the universal cover, the end-points of its lift $\tilde{\gamma}$ will be points  $p_\pm \in F_\infty$ that are two lifts of $p_0$.  (See Figure 7.)

\begin{figure}
  \centering
  \includegraphics[scale=0.5]{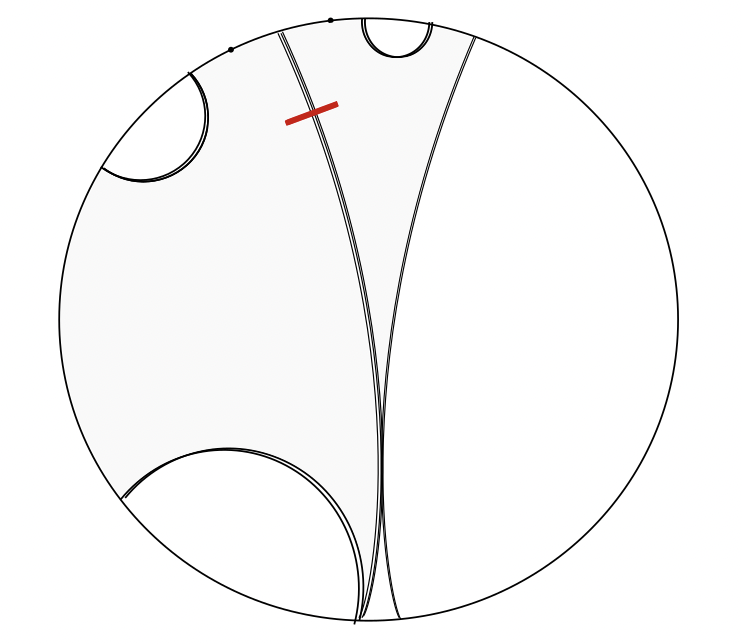}
  \caption{Two complementary regions (plaques) of the lift of the lamination separated by a transverse arc (shown in red) with small measure. If two points in $F_\infty$ lie in the ideal boundary of such plaques, after grafting they will define two distinct asymptotic values of the developing map.}
\end{figure}

Let $\widetilde{Z}_1$ and $\widetilde{Z}_2$ be the corresponding two lifts of $Z$ which $\tilde{\gamma}$ starts and ends in, respectively. Before grafting, these regions of the universal cover develop into disjoint regions in $\cp$; in particular, the developing image of $p_\pm$ are distinct points. This latter fact remains true after grafting, since the transverse measure between $\widetilde{Z}_1$ and $\widetilde{Z}_2$ is small; this implies that for the developing map of $P$, the grafted region inbetween their images has small angular width. Thus we obtain two points in $F_\infty$ whose developing images in $\cp$ are distinct.

 Repeating the argument for another (homotopically distinct) choice of arc from $p_0$ to itself with small intersection with $\lambda^\prime$, that lifts to an arc, say from $p_-$ to another point  $p^\prime\in  F_\infty$, we can conclude that the image of $p_-$ and $p^\prime$ are distinct under the developing map of $P$. Then, a third arc from $p_+$ to $p^\prime$ will also have small transverse measure, since it will be at most the sum of the transverse measures between $p_-$ and $p^\prime$ and between $p_-$ and $p_+$. Once again, before grafting these images of the three points $\{p^\prime, p_{-},  p_+\}$ are distinct, and since after grafting the relative bending between them (which is determined by the transverse measures) is small, they remain distinct. We can then conclude that the three points  have distinct images under the developing map of $P$.  Since the framing $\beta$ for $P$ is determined by the asymptotic values of the developing map, its image has at least three points, and we are done.  \end{proof}

\vspace{.05in} 

We shall now give a characterization of the representations underlying the framed representations in the image of $\widehat{\Phi}$.  Let $$\pi:\widehat{\bigchi}(\sm) \to \bigchi(\mathbb{S} \setminus \mathbb{P})$$ be the forgetful map to the $\pslc$-representation variety of the punctured-surface group $\pi_1(\mathbb{S}\setminus \mathbb{P})$, and let $\Phi = \pi \circ \widehat{\Phi}$ be the un-framed monodromy map. 
As a corollary of Theorem \ref{thm2}, we can characterize the image $\Phi$ as follows (\textit{c.f.} Theorem A of \cite{Faraco-Gupta} for the case when $\mathbb{P} = \mathbb{M}$):
\begin{coro}\label{cor:reps}
Recall that $m$ denotes the number of punctures of $\mathbb{S}$, $k$ is the number of boundary components, and $\{n_i-2\}_{1\leq i\leq k}$ are the numbers of marked points on the boundary components, so that $N= \sum\limits_{i=1}^k(n_i-2)$ denotes the total number of marked boundary points. If $N\geq 3$, then any representation is in the image of $\Phi$, i.e.\ the un-framed monodromy map is surjective. For $N\leq 2$, a representation $\rho$ lies in the image of ${\Phi}$ if and only if one of the following hold:
\begin{itemize}
    \item $\rho$ is a non-degenerate representation. 
    \item $k=0$ and $\rho$ is a degenerate representation with at least one apparent singularity, excluding the following cases: 
    \begin{itemize}
        \item $\rho$ is the trivial representation, for $g>0$ and $m=1$ or $m=2$
        \item the image of $\rho$ is a group of order $2$ and $g>0$, $m=1$
    \end{itemize}
    \item $N=1$ and $\rho$ is a degenerate representation, excluding the following cases:
    \begin{itemize}
        \item $\rho$ is the trivial representation and $m=0$ or $m=1$, 
        \item the image of $\rho$ is a group of order $2$ and $m=0$.
    \end{itemize}
    \item $N=2$ and $\rho$ is a degenerate representation, excluding the case where $\rho$ is the trivial representation and $m=0$.
\end{itemize}
\end{coro}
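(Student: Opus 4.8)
The plan is to convert the statement, via Theorem~\ref{thm2}, into a purely representation-theoretic feasibility question and then settle it by an orbit count. First I would invoke the corollary to Theorem~\ref{thm2} stating that $\rho$ lies in the image of $\Phi$ (forgetting the framing) precisely when there exists a framing $\beta$ making $(\rho,\beta)$ non-degenerate. By Lemma~\ref{non-degeneracy} together with Lemma~\ref{flip_non-degenerate}, the existence of such a $\beta$ is equivalent to being able to choose a framing whose image $\beta(F_\infty)$ has at least three distinct points and for which no boundary component carries two adjacent marked points with equal framing. The available freedom is the crux: at a boundary marked point the value of $\beta$ is unconstrained in $\cp$ (only the wrap-around across a boundary loop is prescribed by $\rho(\delta_i)$), while at an interior puncture $p\in\mathbb{P}$ equivariance forces $\beta$ to land on a fixed point of $\rho(\gamma_p)$ -- a single forced point if $\rho(\gamma_p)$ is parabolic, a two-point choice interchanged by a flip if it is elliptic or loxodromic, and an entirely free point if $\rho(\gamma_p)=\mathrm{id}$, i.e.\ at an apparent singularity.

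The key reformulation I would record is that $\beta(F_\infty)$ is exactly the $\rho(\pi_1)$-orbit of the finitely many fundamental framing values (one per marked point in a fundamental domain), so the task is to make this orbit contain at least three points subject to the adjacency condition. When $\rho$ is a non-degenerate representation its image fixes neither a point nor a pair of points of $\cp$, so the orbit of one generic fundamental value already spreads to at least three points; hence every non-degenerate $\rho$ is realizable, regardless of $N$. When $N\ge 3$ there are at least three freely choosable boundary values, and even for a trivial orbit one obtains three distinct image points, while the adjacency condition -- which only constrains boundaries carrying two or more marked points -- is easily met; this gives surjectivity.

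The substance of the argument, and the step I expect to be the main obstacle, is the degenerate case with $N\le 2$, where $\rho(\pi_1)$ fixes a point or a pair $\{p,q\}$ so that orbits can be too small. Here I would show that a free value $v\notin\{p,q\}$ has an orbit of at least three points unless $\rho$ is trivial or has order-two image: for trivial $\rho$ each orbit is a single point, so one needs at least three free fundamental values; for an order-two image each free value contributes a two-point orbit, so one needs at least two free values. Writing the number of freely choosable fundamental values as $N+a$, where $a$ is the number of apparent singularities among the $m$ punctures, the two thresholds become $N+m\ge 3$ in the trivial case (since a trivial representation makes all $m$ punctures apparent, so $a=m$) and $N+a\ge 2$ in the order-two case.

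Finally I would match these thresholds against the excluded lists. A trivial representation fails exactly when $N+m\le 2$, which under the standing constraint $\chi>0$ leaves precisely $m\in\{1,2\}$ for $k=0$, $m\in\{0,1\}$ for $N=1$, and $m=0$ for $N=2$; an order-two image fails exactly when only one free value is available, and noting that in the binding cases the relevant puncture is forced to be an apparent singularity by the commutator relation, this leaves $m=1$ for $k=0$ and $m=0$ for $N=1$ -- reproducing the stated exceptions. It then remains to check that the adjacency condition is never the binding constraint: since boundary framings are free, all required adjacent and wrap-around inequalities can be arranged once three orbit points are present, and for a degenerate but non-abelian $\rho$ the orbit of a single free boundary value already reaches three points, so no apparent singularity is needed as soon as $N\ge 1$, whereas for $k=0$ it is exactly the hypothesis of at least one apparent singularity that supplies the required free value.
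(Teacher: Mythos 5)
Your overall strategy is the same as the paper's: reduce via Theorem \ref{thm2} and Lemma \ref{non-degeneracy} to deciding when $\rho$ admits a framing whose image has at least three points, then split according to whether the image of $\rho$ is trivial, of order two, or of order at least three, counting how many framing values can be chosen freely ($N$ boundary values plus one per apparent singularity). The trivial-image count $N+m\geq 3$ and the matching against the standing constraint $\chi>0$ are correct and reproduce the stated exceptions.

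There is, however, a genuine gap in the order-two case. Your criterion ``an order-two image fails exactly when only one free value is available,'' i.e.\ when $N+a\leq 1$, is false: a forced framing value at a \emph{non-apparent} puncture lands on a fixed point of the order-two generator $g$, which for a generic choice of the single free value $v$ lies outside the two-point orbit $\{v,gv\}$, so one free value plus one forced value already yields three points. Applied literally, your count would wrongly exclude, e.g., $N=1$, $m=1$ with loxodromic or elliptic peripheral monodromy at the puncture ($a=0$, so $N+a=1$), and $k=0$, $m=3$ with peripheral monodromies $g,g,\mathrm{id}$ ($a=1$), both of which are realizable and not on the exclusion list. Your attempted repair --- that ``in the binding cases the relevant puncture is forced to be an apparent singularity by the commutator relation'' --- only holds when $k=0$ and the image is abelian with few punctures; for $N\geq 1$ the fundamental group is free, there is no relation, and the puncture need not be apparent. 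The paper's proof avoids this by explicitly using a fixed point of $g$ as the third framing value (its cases (ii)(b) and (iii)(b)); your argument needs the analogous step: when $|{\rm im}\,\rho|=2$, failure occurs precisely when \emph{every} available framing point lies in a single two-point orbit or in ${\rm Fix}(g)$ with at most two points total, which happens only for $(k=0,\,m=1)$ and $(N=1,\,m=0)$, not whenever $N+a\leq 1$.
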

\begin{proof} By Theorem \ref{thm2}, it suffices to show that these are the only representations that can be framed to obtain a non-degenerate framed representation. This is a consequence of the following four cases/observations:

\vspace{.05in}

(i) If $\rho$ is non-degenerate, we can arbitrarily assign a framing to obtain a non-degenerate $({\rho},\beta)$, by the first part of \cite[Proposition 4.1]{Gup}. If $N \geq3$, then again we can arbitrarily assign a framing to the marked points on boundaries so that the image has at least $3$ distinct points, making it non-degenerate.

\vspace{.05in}

 (ii) In the case $k=0$, we are reduced to the cases considered in Theorem A of \cite{Faraco-Gupta}: If $\rho$ is degenerate and has no apparent singularities, any framed representation will also be degenerate by \cite[Proposition 4.1]{Gup}, which cannot arise as the framed monodromy of a projective structure by \cite[Theorem 6.1]{All-Bri}. If $\rho$ is degenerate and has an apparent singularity, we reduce to the following cases:
    \begin{enumerate}
        \item[(a)] $\rho$ is the trivial representation: In this case, is it easy to check that by Definition \ref{def:deg} the framed representation will be non-degenerate if and only if the image of the framing consists of at least $3$ points. Thus, one can define a non-degenerate framing in this case if and only if $m\geq 3$.

        \vspace{.05in}

        \item[(b)] $\rho$ is not the trivial representation and $m\geq2$ : In this case, suppose that the punctures are $\{p_1,p_2,...,p_m\}$ and $p_1$ is an apparent singularity. We shall use the same labels for their lifts to a fundamental domain in the universal cover of $\mathbb{S}$. Let $Q$ denote the set of points that are fixed by all elements of the image of $\rho$, as in the definition of a degenerate representation. Now, since the image of $p_1$ under the framing can be arbitrary, and $\rho$ is non-trivial, we can  choose a point to be the image of $p_1$ and extend equivariantly such that there are at least two distinct images of the lifts of $p_1$ under the $\rho$-equivariant framing, and moreover these images are disjoint from $Q$. Thus, the image of the resulting framing has at least three points, ensuring that it is non-degenerate.

        \vspace{.05in}

        \item[(c)] $\rho$ is not the trivial representation and $m=1$ : Let the only puncture be $p_1$, which is also an apparent singularity. If the image of $\rho$ is a group of order two, then clearly the image of the framing can have at most two points, and those set of points will be fixed by the image of $\rho$. Thus, it would be a degenerate framed representation by Definition \ref{def:deg}. On the other hand if the image of $\rho$ is a group of order more than two, clearly we can choose a point $p\in \cp$ such that its orbit under the action of this group consists of at least three points. Thus, we can define the framing by mapping a lift of the puncture to $p$ and extending equivariantly; the resulting framed representation is non-degenerate because the image of the framing has at least three points. So, in this case, the representation has such a framing if and only if the image of $\rho$ is not a group of order two.
        
    \end{enumerate}

    \vspace{.05in}

    (iii)  Suppose the total number $N$ of marked points on boundaries be equal to $1$, and $\rho$ is a degenerate representation. If the image of $\rho$ is a group of order at least $3$, we can frame the marked boundary points so that its image under the action of $\rho$ also has at least $3$ points, hence we would obtain a non-degenerate framing. Now we consider the rest of the cases:
    \vspace{.05in}

    \begin{enumerate}
        \item[(a)] $\rho$ is the trivial representation: Clearly if $m\leq1$, the image of the framed representation has exactly $2$ points which are fixed points of the monodromy around all loops, hence the representation is degenerate. If $m\geq2$, then we can arbitrarily assign a framing to the punctures and the marked boundary points to obtain a framing with $3$ points, getting a non-degenerate representation.

        \vspace{.05in}

        \item[(b)] The image of $\rho$ is a group of order $2$: If $m=0$, the image of any framing will necessarily consist of at most $2$ points. Moreover, since the image of $\rho$ is abelian, the monodromy around the boundary must be identity. Hence, it follows that the framed representation is degenerate. If $m\geq1$, then we can define a framing that sends the (lift of the) puncture to a point fixed by the monodromy around all loops, and sends a lift of the marked point on the boundary to a point in $\cp$ whose orbit under the image of $\rho$ has order $2$. Then again, we obtain a framing with $3$ points, making it non-degenerate.
        
    \end{enumerate}

    \vspace{.05in}

    (iv)  Finally, if the total number of marked boundary points is $2$, and there is at least $1$ apparent singularity, we can assign points in $\cp$ to lifts of the marked boundary to a fundamental domain arbitrarily, and extend equivariantly to obtain a framing with at least $3$ points, making it a non-degenerate framing. If there are no punctures, then again we can obtain $3$ points in the image of the framing if $\rho$ is not the trivial representation. If $\rho$ is trivial, then clearly any framed representation will be degenerate.
\end{proof}

\noindent\textit{Remark.} We note that the projective structures in \cite{Faraco-Gupta} and their resulting \textit{framed} monodromies are consistent with the characterization in Theorem \ref{thm2}, i.e.\ they are non-degenerate framed representations. For example, in the case that the number of punctures  $m\geq 2$, and $\rho$ is a non-trivial affine representation (i.e.\ with image in the affine group $\text{Aff}(\mathbb{C})\subset \pslc$), then $\rho$ is a degenerate representation. However, the corresponding affine structure with holonomy $\rho$ constructed in the proof of \cite[Theorem C]{Faraco-Gupta} does define a non-degenerate framing by considering the asymptotic values of the developing map as in \S4.1.2.

\subsection{Showing $\widehat{\Phi}$ is a local homeomorphism}

In this section, we prove that the monodromy map  is a local homeomorphism (Theorem \ref{thm3}).  For the case of projective structures with only irregular singularities, this was proved in \cite{Gup-Mj}, and we shall use results from there, together with our Grafting Theorem (Theorem \ref{thm1}).

\vspace{.1in} 

Before we start with the proof, we record the following observation that we shall use; this is also \cite[Lemma 5.4]{Baba3}.

\begin{lem}\label{lem:isotopy}
    Let $E$ be either a cuspidal end or a collar neighborhood of a geodesic boundary component of a hyperbolic surface, and let $L_1$ and $L_2$ be two distinct collections of pairwise-disjoint weighted geodesics incident at that end, i.e.\ either going into the cusp  or spiralling onto the boundary geodesic. Suppose that (i) the total sum of weights for both collections are the same,  (ii) in case of a geodesic boundary end, the leaves spiral in the same direction, (iii) the resulting monodromy around the puncture is the same in both cases. Then, the developing maps on the universal cover of the end $E$ after grafting along $L_1$ and $L_2$ respectively, are $\mathbb{Z}$-equivariantly isotopic. 
 \end{lem}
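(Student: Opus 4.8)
The plan is to reduce the statement to the connectedness of certain groups acting on the developing map, using the explicit model maps already computed in Lemma \ref{regular_grafting}. First I would lift everything to the universal cover of the end $E$: for a cusp this is a horodisk $H=\{z:\mathrm{Im}(z)>a\}$ with deck group generated by $T:z\mapsto z+1$, and for a geodesic boundary it is the corresponding sector with $T:z\mapsto\lambda z$. Writing $f_1,f_2:H\to\cp$ for the two developing maps obtained by grafting along $L_1$ and $L_2$, hypothesis (iii) says they are equivariant for one and the same monodromy $M$, i.e.\ $f_i\circ T=M\circ f_i$. The goal is then to produce a continuous family $f_t:H\to\cp$ with $f_t\circ T=M\circ f_t$ for all $t$, interpolating between $f_1$ and $f_2$; this is exactly a $\mathbb{Z}$-equivariant isotopy.

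The key observation is that, by the computations in Lemma \ref{regular_grafting}, the developing map produced by grafting depends on the finer grafting configuration (the positions and individual weights of the finitely many leaves in a fundamental domain) only through the uniformizing coordinate and the monodromy constant $c$ of \eqref{cdef}. Concretely, in the uniformizing coordinate of the grafted end the developing map is one of the explicit \emph{model} maps ($z\mapsto e^{-2\pi i\alpha z}$ when $\alpha$ is not a multiple of $2\pi$, and $w\mapsto w^{-n}+\log w$ or $w\mapsto w^{n}$ in the integer case, with the analogous maps for a geodesic end), and this model is determined entirely by the total weight $\alpha$ (hypothesis (i)), the boundary length $l$, the type of end, and the direction of spiralling (hypothesis (ii)). In particular the two grafted projective structures have equal Schwarzian germs at the puncture and are conformally identified there by a rotation $\psi$ of $\mathbb{D}^\ast$. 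It follows that $f_2=g\circ f_1\circ\widetilde{\psi}$ for some $g\in\pslc$ and some lift $\widetilde{\psi}$ (a translation of $H$) of a rotation of $\mathbb{D}^\ast$.

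It remains to connect $f_1$ to $f_2$ through $M$-equivariant maps. Since $\widetilde{\psi}$ commutes with $T$, comparing the two expressions $f_2\circ T=M\circ f_2$ and $f_2\circ T=(gMg^{-1})\circ f_2$ forces $g$ to lie in the centralizer $Z(M)\subset\pslc$. For every $M$ this centralizer is connected: it is a copy of $\mathbb{C}^\ast$ when $M$ is elliptic or loxodromic, a copy of $\mathbb{C}$ when $M$ is parabolic, and all of $\pslc$ when $M$ is the identity; moreover the rotations of $\mathbb{D}^\ast$ form a connected group. Choosing a path $g_t$ from the identity to $g$ in $Z(M)$ and a path $\widetilde{\psi}_t$ from the identity to $\widetilde{\psi}$ among lifts of rotations, the family $f_t=g_t\circ f_1\circ\widetilde{\psi}_t$ is the desired $\mathbb{Z}$-equivariant isotopy: each $f_t$ is equivariant because $g_t$ commutes with $M$ and $\widetilde{\psi}_t$ commutes with $T$. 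The geodesic-boundary case is handled identically after the coordinate change $z\mapsto 1/z$ used in Lemma \ref{regular_grafting}, hypothesis (ii) guaranteeing that $L_1$ and $L_2$ have the same chirality so they fall in the same case of that computation; the dichotomy between trivial and parabolic monodromy in the integer-$2\pi$ case is resolved by hypothesis (iii), which dictates which model map applies.

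I expect the main obstacle to be the second paragraph: rigorously extracting from the grafting computation that the developing map in the uniformizing coordinate is genuinely independent of the positions and individual weights of the leaves, so that the only residual ambiguity between $f_1$ and $f_2$ is the rotation $\widetilde{\psi}$ together with a global M\"obius factor. Once this is in hand the connectedness argument is routine. An alternative, more hands-on route would avoid this by connecting $L_1$ and $L_2$ directly through a continuous path of weighted configurations with the \emph{same} total weight and the \emph{same} monodromy --- padding with zero-weight ``ghost'' leaves to equalize the number of leaves and solving \eqref{cdef} to keep $c$ constant --- and then invoking the continuity of the grafting operation as in Proposition \ref{prop:cont}; there the difficulty shifts to proving that this space of configurations is path-connected.
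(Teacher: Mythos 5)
Your strategy is genuinely different from the paper's. The paper's proof stays entirely geometric: it fixes a fundamental domain $F$ of the $\mathbb{Z}$-action in $\widetilde{E}$, observes that after grafting the developed image of $F$ is a portion of a lune of angular width $\alpha=$ (total weight) with vertex at the fixed point(s) of the monodromy, and notes that hypotheses (i)--(iii) force the two bounding circular arcs of this lune to coincide for $L_1$ and $L_2$ (after a normalization by an element of the centralizer of $M$, which is implicitly where connectedness of the centralizer enters there too). The only discrepancy is then the third boundary arc of the developed fundamental domain, whose position depends on the locations of the grafting leaves; the paper isotopes these arcs inside the common lune and extends $\mathbb{Z}$-equivariantly. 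Your route instead passes through the explicit model maps of Lemma \ref{regular_grafting} and the conformal uniformization of the grafted end, reducing everything to connectedness of $Z(M)$ and of the rotation group of $\mathbb{D}^\ast$. What your approach buys is that the final interpolation $f_t=g_t\circ f_1\circ\widetilde{\psi}_t$ is completely explicit and manifestly $M$-equivariant; what it costs is the reduction itself, which is exactly where the gap sits.

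The gap is the identity $f_2=g\circ f_1\circ\widetilde{\psi}$ in your second paragraph, and you have correctly located it as the main obstacle, but as written it is false rather than merely unproved. The maps $f_1,f_2$ in the Lemma are developing maps on the universal cover of the \emph{topological} end $E$ (identified with the grafted ends via the marking), and since $L_1$ and $L_2$ insert lunes at different positions, no post-composition by a M\"obius map and pre-composition by a translation of $H$ makes them equal: the equality only holds after replacing $\widetilde{\psi}$ by the $T$-equivariant homeomorphism $u_2^{-1}\circ(\text{rotation})\circ u_1$ of $\widetilde{E}$, where $u_i$ are the (lifted) uniformizations of the two grafted ends. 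To salvage the argument you must additionally show that this reparametrizing homeomorphism is $T$-equivariantly isotopic to the identity (true, since every orientation-preserving self-homeomorphism of $\mathbb{D}^\ast$ is isotopic to the identity and the isotopy lifts equivariantly), and insert that isotopy into your family $f_t$. There is also a smaller point to check in step (a): the model maps of Lemma \ref{regular_grafting} describe the developed germ at the puncture, whereas the Lemma and its application in Theorem \ref{thm3} concern a definite neighborhood $E$; one should either shrink $E$ or argue that the isotopy propagates over the whole collar. With those two repairs your proof goes through; your alternative ``hands-on'' suggestion of deforming $L_1$ to $L_2$ through configurations of constant total weight and constant $c$ is in spirit closest to what the paper actually does.
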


 \begin{proof}

Let $\widetilde{E}$ be a lift of $E$ to the universal cover. We shall focus our attention on a fundamental domain $F \subset \widetilde{E}$ of the $\mathbb{Z}$-action. The geodesic lines in $L_1$ and $L_2$ lift to a collection of pairwise-disjoint geodesic lines passing through $F$, each asymptotic to the same point in the ideal boundary. Here, we choose the fundamental domain $F$ such that there are finitely many such lifts in $F$. (See Figure 2 for the case of a geodesic boundary end.) 
Grafting along $L_1$ (and $L_2$) inserts lunes along these lines in the universal cover, of angular widths equal to the corresponding weights. Note that the intermediate regions of $F$ can be thought of as lunes of zero angular width (i.e. regions bounded on two sides by circular arcs incident to a common point in $\cp$ where they share a tangent line). After grafting, the new fundamental domain of the $\mathbb{Z}$-action on the target is a portion of a lune in $\cp$ of angular width equal to the sum of the weights on the grafting lines. Since we are also assuming that the monodromy in both cases is identical, the two circular arcs bounding this fundamental domain can be taken to be exactly the same.  The remaining boundary edge of $F$ might result in distinct arcs after grafting (that depends on the locations of the grafting lines). However these arcs will be isotopic to each other, and this isotopy can be extended to a  $\mathbb{Z}$-equivariant isotopy on $\widetilde{E}$.  \end{proof}

\vspace{.05in}

\noindent
\textit{Remark.} The monodromy around the puncture can differ even if the hypotheses (i) and (ii) in the statement of the Lemma are satisfied, as the following example illustrates: Let $E = H/\langle z \mapsto z+1 \rangle$ be a cusp,  let $L_1$ comprise two geodesics, both with weight $\pi$, which lift to the vertical lines $\text{Re}(z) =0$ and $\text{Re}(z)=1/3$, and let $L_2$  comprise two geodesics with weight $\pi$, this time lifting to vertical lines  $\text{Re}(z) =0$ and  $\text{Re}(z) =\frac{1}{2}$. To compute the monodromy from equations \eqref{mon-exp} and \eqref{cdef}, we use $a_1=0, \omega_1 = \omega_2 = e^{i\pi}=-1$, which implies that $c=1-2a_2$ where $a_2$ is the real coordinate of the second geodesic. Thus, in the first case $a_2=1/3 = c$ and the monodromy is a parabolic element, while in the second case $a_2=\frac{1}{2}, c=0$ and the monodromy is the identity element.

\vspace{.1in}

We can now complete:

\begin{proof}[Proof of Theorem \ref{thm3}]
We shall use the strategy of the proof in \cite{Gup-Mj}.
By the Invariance of Domain it suffices to show that $\widehat{\Phi}$ is locally injective.
\vspace{.05in}

Let $\hat{\rho}$ be a framed representation in $\widehat{\bigchi}(\sm)$ that lies in the image of $\widehat{\Phi}$ and let $P \in \mathcal{P}^\pm(\sm)$ such that $\widehat{\Phi}(P) = \hat{\rho}$. 

\vspace{.05in} 

By Theorem \ref{thm1}, we know that there exists $(X, \lambda) \in \mathcal{T}^\pm \times \mathcal{ML}^\pm$ such that we obtain the projective structure $P$ by grafting $X$ along $\lambda$. We shall show that there is a neighborhood $U$ of $P$ in $\mathcal{P}^\pm(\sm)$ such that if $P^\prime = \widehat{Gr}(X^\prime, \lambda^\prime)$ for some $(X^\prime, \lambda^\prime) \in U$ has the same framed monodromy $\hat{\rho}$, then $X= X^\prime$ and $\lambda = \lambda^\prime$.  

\vspace{.05in} 

By Corollary \ref{cor:mon} we know that the monodromy around any regular puncture determines the type of the corresponding end of $X$ and $X^\prime$ (including, in the case of a  geodesic boundary end, its length) and the total weight (modulo $2\pi$) of the leaves of $\lambda$ that are incident at that end. Since $\lambda$ and $\lambda^\prime$ are close to each other, the homotopy classes of the leaves are identical and the corresponding transverse measures are close; in our context this implies that the leaves incident to the cusp or geodesic boundary end of $X$ (and $X^\prime$)  will have \textit{identical} total weights. Moreover, in the case of a geodesic boundary end with leaves of $\lambda$ of \textit{non-zero} weight spiralling into it, one can ensure that the neighborhood $U$ in $\mathcal{T}^\pm \times \mathcal{ML}^\pm$ is small enough such that the direction of spiralling of the leaves are the same.  Since, as already mentioned, the geometry of such an end $E$ is the same for $X$ and $X^\prime$,  we conclude from Lemma ~\ref{lem:isotopy} that one can modify $P$ and $P^\prime$ by an isotopy (note that this does not change the corresponding points in the deformation space) such that the restriction of their developing maps to the universal cover of $E$ are identical.  

\vspace{.05in} 
Proposition 6.3 of \cite{Gup-Mj} asserts that the same holds at irregular singularity: for each such singularity, the framed representation $\hat{\rho}$ determines the corresponding hyperbolic crowns of $X$ and $X^\prime$, as well as the leaves (and weights) of $\lambda$ and $\lambda^\prime$ that intersect the crown end. 
\vspace{.05in} 

Thus, as in \cite{Gup-Mj}, we are reduced to an application of the Ehresmann-Thurston Principle for manifolds with boundary (see, for example, Theorem I.1.7.1 of \cite{EpMar} or Proposition 1 of \cite{Danciger}). Briefly,  let $S_0$ be the compact surface-with-boundary obtained by removing the ends of the marked-and-bordered surface $\mathbb{S}$, and  consider the space $\mathcal{D}$ of developing maps for projective structures on $S_0$ such that on each boundary component they all restrict to the same map. Then the Ehresmann-Thurston Principle asserts that there is a neighborhood of any developing map $D_0$  in this relative deformation space  $\mathcal{D}$ such that any other developing map in that neighborhood with identical holonomy will be equivariantly isotopic to $D_0$. We apply this to conclude that the projective structures $P$ and $P^\prime$ determine the same point in $\mathcal{P}^\pm(\sm)$. \end{proof}

\subsection{Proof of Corollary \ref{cor:biholo}}

Following the Remark at the end of \S 2.3.3, there are non-constant holomorphic functions $r_p : \mathcal{P}^{\pm}\rightarrow \mathbb{C}$ that map signed projective structures to the exponent at each puncture $p$ in $\mathbb{P}$ (which corresponds to a regular singularity). We know from the computations in the proofs of Lemmas ~\ref{regular_grafting} and ~\ref{lem:regexp} that if a puncture $p$ is an apparent singularity for the projective structure, then the exponent $r_p = 2\pi in$ for some $n\in \mathbb{Z}$. Therefore, it follows that the projective structures having apparent singularities are contained in the analytic subset of $\mathcal{P}^{\pm}$ locally cut out by finitely many equations of the form $\{r_p = 2\pi i n_p : p\in\mathbb{P}\}$ for a tuple of integers $(n_p)_{p\in\mathbb{P}} \in \mathbb{Z}^{\lvert \mathbb{P}\rvert }$. 

\vspace{.05in}

By Theorem 1.1 of \cite{All-Bri}, we know that $\widehat{\Phi}$ is a holomorphic map on the subset $\mathcal{P}^{*} \subset \mathcal{P}^\pm$ of signed projective structures with no apparent singularities. Therefore, by Riemann's removable singularity theorem (see, for example,  Proposition 1.1.7 of \cite{Huy05}), it follows that $\widehat{\Phi}$ is a holomorphic map on all of $\mathcal{P}^{\pm}$. Combining this with Theorem \ref{thm3}, we conclude that the map $\widehat{\Phi}$ is a local biholomorphism.
$\qed$


\begin{thebibliography}{2}

\bibitem[AB20]{All-Bri}
Allegretti, Dylan G. L.; Bridgeland, Tom. The monodromy of meromorphic projective structures. Trans. Amer. Math. Soc., Vol. 373, 6321--6367 (2020).

\bibitem[All21a]{All} Allegretti, Dylan G.L. Stability Conditions and Teichm\"{u}ller Space. Math. Ann., Vol. 390, 3827--3890 (2024).

\bibitem[All21b]{All2} Allegretti, Dylan G.L. Stability conditions, cluster varieties, and Riemann-Hilbert problems from surfaces. Adv. Math., Vol. 380, 107610 (2021).

\bibitem[Alley21]{Alley} Alley, Charles L. On the monodromy of meromorphic cyclic opers on the Riemann sphere.
Int. Math. Res. Not., Vol. 2021, No. 21, 16693--16725 (2021).

\bibitem[Bab15]{Baba1} {Baba, Shinpei}. {{$2\pi$}-grafting and complex projective structures, {I}}. {Geom. Topol.}, Vol. {19}, No. 6, 3233--3287 (2015).


\bibitem[Bab17]{Baba2} {Baba, Shinpei}. {{$2\pi$}-grafting and complex projective structures with generic holonomy}. {Geom. Funct. Anal.}, Vol. {27}, No. 5, 1017--1069 (2017). 



\bibitem[Bab19]{Baba3} {Baba, Shinpei}. {Neck-Pinching of  $CP^1-$structures in the $PSL(2,C)-$character variety}. \emph{preprint, available at} https://arxiv.org/abs/1907.00092 (2019).

\bibitem[Bab20]{BabExp} {Baba, Shinpei}. {On {T}hurston's parameterization of {${\mathbb{C}}{\mathrm P}^1$}-structures}. {In the tradition of {T}hurston---geometry and topology}, {Springer, Cham}, {241--254} (2020).

\bibitem[BJJP22]{BJJP22} Bainbridge, Matt; Johnson, Chris; Judge, Chris; Park, Insung. Haupt's theorem of strata of abelian differentials. Israel J.\ Math., Vol. 252, No. 1, 429--459 (2022).

\bibitem[BBCR21]{Ball}  Ballas, Samuel A.;  Bowers, Philip L.; Casella, Alex; Ruffoni, Lorenzo. 
Tame and relatively elliptic $\mathbb{CP}^1$-structures on the thrice-punctured sphere,
 \emph{to appear in Algebr. Geom. Topol.}, https://arxiv.org/abs/2107.06370 (2021).

\bibitem[Bon01]{Bonahon} Bonahon, Francis. {Geodesic laminations on surfaces}, Laminations and foliations in dynamics, geometry and topology. Contemp. Math., Vol. 269, 1--37 (2001).

\bibitem[BS15]{BS15} Bridgeland, Tom; Smith, Ivan. Quadratic differentials as stability conditions. Publications Mathématiques de l'IHÉS, Vol. 121, 155--278 (2015).

\bibitem[CF24]{CF24} Chen, Dawei; Faraco, Gianluca. Period realization of meromorphic differentials with prescribed invariants. Forum of Mathematics, Sigma, Vol. 12, e90 (2024).

\bibitem[CFG22]{CFG22} Chenakkod, Shabarish; Faraco, Gianluca; Gupta, Subhojoy. Translation surfaces and periods of meromorphic differentials. Proc. Lond. Math. Soc., Vol. 124, No. 4, 479--557 (2022).

\bibitem[D13]{Danciger} Danciger, Jeffrey. {A geometric transition from hyperbolic to anti-de {S}itter geometry}. Geom. Topol, Vol. 17, No. 5, 3077--3134 (2013).
    
     
\bibitem[dSG16]{dSG}
de Saint-Gervais, Henri P. Uniformization of Riemann surfaces, Heritage of European Mathematics, European Mathematical Society (EMS), Zurich,  Translated from the 2010 French original (2016).

\bibitem[Dum09]{Dum}
Dumas, David. Complex projective structures. Handbook of Teichm\"{u}ller theory, Vol. II, IRMA Lect. Math. Theor. Phys., Vol. 13, Eur. Math. Soc., Zurich, 455--508 (2009).

\bibitem[EpMar87]{EpMar} Epstein, David B. A.; Marden, Albert. {Convex hulls in hyperbolic space, a theorem of {S}ullivan, and  measured pleated surfaces}. {Analytical and geometric aspects of hyperbolic space   ({C}oventry/{D}urham, 1984)}, {London Math. Soc. Lecture Note Ser.}, Vol. {111}, {Cambridge Univ. Press, Cambridge} (1987). 

\bibitem[FBZ04]{FBZ} Frenkel, Edward; Ben-Zvi, David. Vertex Algebras and Algebraic Curves, second edition, Mathematical Surveys and Monographs, Vol. 88 (2004). 
      
\bibitem[FG21]{Faraco-Gupta}
Faraco, Gianluca; Gupta, Subhojoy. Monodromy of Schwarzian equations with regular singularities, \emph{to appear in Geometry \& Topology}, https://arxiv.org/abs/2109.04044 (2021).

\bibitem[FG06]{FG} Fock, Vladimir; Goncharov, Alexander. Moduli spaces of local systems and higher {T}eichm\"{u}ller theory. {Publ. Math. Inst. Hautes \'{E}tudes Sci.}, Vol. 103, 1--211 (2006).

\bibitem[FG07]{FG2} Fock, Vladimir; Goncharov, Alexander.
Dual Teichmüller and lamination spaces. Handbook of Teichmüller theory, Vol. I, 647-- 684, IRMA Lect. Math. Theor. Phys., 11
European Mathematical Society (EMS), Zürich (2007).

\bibitem[G09]{Gabai} Gabai, David. Almost filling laminations and the connectivity of ending lamination space. Geom. Topol., Vol. 13, No. 2, 1017--1041 (2009).

\bibitem[GKM]{GKM}  {Gallo, Daniel; Kapovich, Michael; Marden, Albert}. {The monodromy groups of {S}chwarzian equations on closed {R}iemann surfaces}. {Ann. of Math.}, Vol. 151, No. 2, 625--704 (2000).

\bibitem[Gol10]{GoldmanICM} {Goldman, William M.} {Locally homogeneous geometric manifolds}, in the
{Proceedings of the {I}nternational {C}ongress of {M}athematicians. {V}olume {II}}, {Hindustan Book Agency, New Delhi}, 717--744 (2010).

\bibitem[GM20]{Gup-Mj_2}
Gupta, Subhojoy; Mj, Mahan.  Monodromy representations of meromorphic projective structures. Proc.
Amer. Math. Soc., Vol. 148, No. 5, 2069--2078 (2020).

\bibitem[GM21]{Gup-Mj}
Gupta, Subhojoy; Mj, Mahan. Meromorphic projective structures, grafting and the monodromy map, Adv. Math., Vol. 383, 107673 (2021).

\bibitem[Gup21]{Gup}
 Gupta, Subhojoy. Monodromy groups of $\mathbb{C}\textrm{P}^1$-structures on punctured surfaces. J. Topol., Vol. 14, No. 2, 538--559 (2021).

\bibitem[Hat88]{Hat}
Hatcher, Allen. Measured lamination spaces for surfaces, from the topological viewpoint. Topology Appl., Vol. 30, No. 1, 63--88 (1988).

\bibitem[Hau20]{Hau20} Haupt, Otto. Ein satz über die abelschen integrale, I. Gattung. Math. Zeit. 6, No. 3-4, 219--237 (1920).

\bibitem[Hejh75]{Hejhal} Hejhal, Dennis A. Monodromy groups and linearly polymorphic functions. {Acta Math.}, Vol. {135}, No.{1}, {1--55} (1975).

\bibitem[Hejh75b]{Hejh2}  Hejhal, Dennis A. Monodromy groups for higher-order differential equations. Bull. A.M.S. Vol. 81, 590--592 (1975). 

\bibitem[Huy05]{Huy05} Huybrechts, Daniel. Complex Geometry, Universitext, Springer, New York
(2005).

\bibitem[KT92]{KT92} Kamishima, Yoshinobu;  Tan, Ser P. {Deformation spaces on geometric structures}, in {Aspects of low-dimensional manifolds}, {Adv. Stud. Pure Math.}, Vol. {20}, {Kinokuniya, Tokyo} (1992).

\bibitem[Kap20]{Kap20} Kapovich, Misha. Periods of abelian differentials and dynamics, in Dynamics: topology and
numbers, Contemp. Math., Vol. 744, Amer. Math. Soc., Providence, RI, 2970--315 (2020).

\bibitem[KP94]{Kul-Pin}
Kulkarni, Ravi S.; Pinkall, Ulrich. A canonical metric for Möbius structures and its applications.  Mathematische Zeitschrift, Vol. 216, No. 1, 89--130 (1994).


\bibitem[L20]{HarmonicTropical} Lang, Lionel. Harmonic tropical morphisms and approximation. Math. Ann., Vol. 377, No. 1-2, 379--419 (2020).

\bibitem[Lan02]{Lang} Lang, Serge. Algebra, Graduate Studies in Math., Springer, New York (2002).

\bibitem[LF20]{LF20} Le Fils, Thomas. Periods of abelian differentials with prescribed singularities. International Mathematical Research Notices, Vol. 2022, No. 8, 5601--5616 (2020).

\bibitem[LF23]{LF23} Le Fils, Thomas.  Holonomy of complex projective structures on surfaces with prescribed branch data. J. Topol., Vol. 16, 430--487 (2023). 

\bibitem[Luo93]{Luo} {Luo, Feng}. {Monodromy groups of projective structures on punctured  surfaces}. {Invent. Math.}, Vol. {111}, No. 3, {541--555} (1993). 

\bibitem[Nasc24]{Nasci} Nascimento, Genyle. Monodromies of projective structures on surface of finite-type. Geom. Dedicata, Vol. 218, No. 1 (2024). 
     
\bibitem[PH92]{Pen} Penner, Robert C.; Harer, John L. Combinatorics of train tracks, Annals of Mathematics Studies, Vol. 125, Princeton University Press, Princeton, NJ (1992).



\bibitem[Ser23]{Seran} S\'{e}randour, Titouan. {Meromorphic {P}rojective Structures, {O}pers and {M}onodromy}, \emph{preprint, available at} https://arxiv.org/abs/2309.02203 (2023).

\bibitem[Sib75]{Sibuya}  {Sibuya, Yasutaka}, {Global theory of a second order linear ordinary differential equation with a polynomial coefficient}, {North-Holland Mathematics Studies, Vol. 18}, {North-Holland Publishing Co., Amsterdam-Oxford; American  Elsevier Publishing Co., Inc., New York} (1975).

\bibitem[Tani97]{Tani} Tanigawa, Harumi. Grafting, harmonic maps and projective structures on surfaces. {J. Differential Geom.}, Vol. 47, No. 3, 399--419 (1997). 

\bibitem[Thu]{Thu} Thurston, Dylan. On geometric intersection of curves in surfaces, \emph{draft  available at}
http://pages.iu.edu/~dpthurst/DehnCoordinates.pdf.
\end{thebibliography}
\end{document}